\newtheorem{theorem}{Theorem}[section]
\newtheorem{proposition}[theorem]{Proposition}
\newtheorem{lemma}[theorem]{Lemma}
\newtheorem{corollary}[theorem]{Corollary}
\newtheorem{conjecture}[theorem]{Conjecture}
\theoremstyle{definition}
\newtheorem{definition}[theorem]{Definition}
\theoremstyle{remark}
\newtheorem{remark}[theorem]{Remark}
\newtheorem{ex}[theorem]{Example}
\numberwithin{equation}{section}
\newcommand{\cl}{{\rm cl}}
\newcommand{\CH}{{\rm CH}}
\newcommand{\Pic}{{\rm Pic}}
\newcommand{\End}{{\rm End}}
\newcommand{\Hom}{{\rm Hom}}
\newcommand{\Spec}{{\rm Spec\,}}
\newcommand{\Char}{{\rm char}}
\newcommand{\Tr}{{\text{Tr}}}
\newcommand{\tr}{{\text{tr}}}
\newcommand{\0}{\emptyset}
\newcommand{\sE}{{\mathcal E}}
\newcommand{\sI}{{\mathcal I}}
\newcommand{\sK}{{\mathcal K}}
\newcommand{\sO}{{\mathcal O}}
\newcommand{\sT}{{\mathcal T}}
\newcommand{\sW}{{\mathcal W}}
\newcommand{\A}{{\mathbb A}}
\newcommand{\C}{{\mathbb C}}
\newcommand{\F}{{\mathbb F}}
\newcommand{\G}{{\mathbb G}}
\renewcommand{\P}{{\mathbb P}}
\newcommand{\Q}{{\mathbb Q}}
\newcommand{\R}{{\mathbb R}}
\newcommand{\Z}{{\mathbb Z}}
\newcommand{\EM}{{\operatorname{EM}}}
\newcommand{\BM}{{\operatorname{B-M}}}
\renewcommand{\det}{\operatorname{det}}
\newcommand{\id}{{\operatorname{\rm Id}}}
\newcommand{\Sch}{{\operatorname{\mathbf{Sch}}}} 
\newcommand{\colim}{{\mathop{\rm colim}}}
\newcommand{\op}{{\text{\rm op}}}
\newcommand{\<}{\langle}
\renewcommand{\>}{\rangle}
\newcommand{\del}{\partial}
\newcommand{\Sm}{{\mathbf{Sm}}}
\newcommand{\Proj}{{\operatorname{Proj}}}
\newcommand{\Sym}{{\operatorname{Sym}}}
\newcommand{\Gr}{{\operatorname{\rm Gr}}} 
\newcommand{\rnk}{{\operatorname{\text{rnk}}}}
\newcommand{\MGL}{{\operatorname{MGL}}}
\newcommand{\GW}{{\operatorname{GW}}} 
\newcommand{\sGW}{{\mathcal{GW}}} 
\newcommand{\SH}{{\operatorname{SH}}} 
\newcommand{\Th}{{\operatorname{Th}}} 
\renewcommand{\th}{{\operatorname{th}}} 
\newcommand{\sHom}{\mathcal{H}om}
\newcommand{\BGL}{\operatorname{BGL}}
\newcommand{\GL}{\operatorname{GL}}
\newcommand{\SL}{\operatorname{SL}}
\newcommand{\Sp}{\operatorname{Sp}}
\newcommand{\BSL}{\operatorname{BSL}}
\newcommand{\ev}{\text{\it ev}}
\newcommand{\pr}{\text{pr}}
\renewcommand{\Re}{\operatorname{Re}}
\renewcommand{\Im}{\operatorname{Im}}
\newcommand{\ind}[1]{}
\newcommand{\inp}[1]{}
\newcommand{\mS}{\mathbb{S}}
\renewcommand{\setminus}{-}
\begin{document}
\setcounter{tocdepth}{1}

\title[Motivic Euler characteristics]{Motivic Euler characteristics and Witt-valued characteristic classes
}

\author{Marc Levine}
\address{Fakult\"at Mathematik\\
Universit\"at Duisburg-Essen\\
Thea-Leymann-Str. 9\\
45127 Essen\\
Germany}
\email{marc.levine@uni-due.de}

\keywords{Euler classes, Euler characteristics, Characteristic classes, Witt cohomology, Becker-Gottlieb transfers}

\subjclass[2010]{14F42, 55N20, 55N35}

\thanks{The author would like to thank the DFG for its support through the SFB Transregio 45 and the SPP 1786 ``Homotopy theory and algebraic geometry''}

\begin{abstract}  This paper examines   Euler characteristics and characteristic classes in the motivic setting. 
 We establish a motivic version of the Becker-Gottlieb transfer, generalizing a construction of Hoyois. Making calculations of the Euler characteristic of the scheme of maximal tori in a reductive group, we prove a generalized splitting principle for the reduction from $\GL_n$ or $\SL_n$ to the normalizer of a maximal torus (in characteristic zero).  Ananyevskiy's splitting principle reduces questions about characteristic classes of vector bundles in $\SL$-oriented, $\eta$-invertible theories to the case of rank two bundles. We refine the torus-normalizer splitting principle for $\SL_2$ to help compute the characteristic classes in Witt cohomology of symmetric powers of a rank two bundle, and then generalize this to develop a general calculus of characteristic classes with values in Witt cohomology. \end{abstract}
\maketitle


\section*{Introduction} A number of cohomology theories refining $K$-theory and motivic cohomology have been introduced and studied over the past 20 years. Well-known example include hermitian $K$-theory \cite{Karoubi, Schlichting}, the  theory of the Chow-Witt groups \cite{AsokFasel, BargeMorel, FaselCW, FaselSri}, Milnor-Witt motivic cohomology \cite{BachmannFasel, CalmesFasel, DegliseFasel1, DegliseFasel2, Druzhin}, Witt theory and the cohomology of the Witt sheaves  \cite{Anan}. These theories all have the property of being {\em SL-oriented}, that is, there are Thom classes for vector bundles endowed with a trivialization of the determinant line bundle. Witt theory and the cohomology of the Witt sheaves have the additional property of being {\em $\eta$-invertible}, that is multiplication by the algebraic Hopf map $\eta$ is an isomorphism on the theory. 

The $\SL$-oriented, $\eta$-invertible theories $\sE$ have been studied by Anan\-yev\-skiy \cite{Anan}. He computes the $\sE$-cohomology of $\BSL_n$ and proves a fundamental $\SL_2$-splitting principle, replacing the classical $\G_m$-splitting principle which plays a central role in the calculus of characteristic classes in $\GL$-oriented theories such as $K$-theory, motivic cohomology or $\MGL$-theory.

Works by Hoyois \cite{HoyoisGL} and Kass-Wickelgren \cite{KW1, KW2}, and well as our papers  \cite{LevVirt, LevEnum}, have initiated a study of Euler characteristics and Euler classes with values in the Grothendieck-Witt ring or in the Chow-Witt groups. In this paper we continue this study, looking at  Euler characteristics and characteristic classes with values in Witt cohomology and other $\SL$-oriented  $\eta$-invertible  theories.  

Using the six-functor formalism on the motivic stable homotopy category, we define a motivic version of the Becker-Gottlieb transfer. This includes the case of a Nisnevich locally trivial fiber bundle $f:E\to B$ with fiber $F$, such that the suspension spectrum of $F$ is a strongly dualizable object in the appropriate  motivic stable homotopy category. Just as for the classical Becker-Gottlieb transfer, the motivic transfer defines a splitting to the pullback map $f^*$ on $\sE$-cohomology for a motivic  spectrum $\sE$, as long as the categorical Euler characteristic of $F$ is invertible in the coefficient ring of $\sE$. This is discussed in 
section~\ref{sec:Transfer}. 

In section~\ref{sec:conj}, we consider a conjecture of Morel that for $N$ the normalizer of a maximal torus in a split reductive group $G$ over a perfect field $k$, the quotient scheme $N\backslash G$ has Euler characteristic one in the Grothendieck-Witt ring of $k$. This generalizes the fact that, for $N$  the normalizer of a maximal  torus in a reductive complex Lie group $G$, $N\backslash G$ has topological Euler characteristic  one. We verify a weak form of this conjecture for $\GL_n$ and $\SL_n$ over a characteristic zero field, as well as the strong form for $\GL_2$ and $\SL_2$ over an arbitrary perfect field.

We   recall the results of Ananyevskiy that we will be using here in section~\ref{sec:background}. In section~\ref{sec:Det}, we  reduce the computation of characteristic classes in an $\eta$-invertible, $\SL$-oriented theory (satisfying an additional technical hypothesis, see Theorem~\ref{thm:BSLDecomp} for the precise statement)  from the general case to that of bundles with trivialized determinant. 

Ananyevskiy's splitting principle for $\SL_n$-bundles reduces the computation of characteristic classes for such bundles in $\SL$-oriented, $\eta$-invertible theories $\sE$ to the case of $\SL_2$-bundles. In sections~\ref{secWittCoh} and \ref{sec:Rank2Bundles} we look more closely at $\SL_2$, refining Ananyevskiy's splitting principle  to reduce further to bundles with group the normalizer $N_{\SL_2}(T)$ of the standard torus $T$ in $\SL_2$. This makes computations much easier, as an irreducible representation of $N_{\SL_2}(T)$ has dimension either one or two, and the representation theory of $N_{\SL_2}(T)$ is formally (almost)  the same as that of real $O(2)$-bundles.  In section~\ref{sec:Rank2Bundles}  we introduce the rank two bundles $\tilde{O}(m)$ over $BN_{\SL_2}(T)$, as analogs of the classical bundles $O(m)$ on $B\G_m$, and show that the analogy gives a method for computing of the Euler class in Witt cohomology. We apply these considerations in section~\ref{sec:Descent} to carry out our computation of the Euler class and Pontryagin class of $\tilde{O}(m)$ in Theorem~\ref{thm:EulerClassOm}. This is used in section~\ref{sec:SymPower} to compute the characteristic classes in Witt cohomology of symmetric powers of a rank two bundle, see Theorem~\ref{thm:SymRnk2}. 

We  compute the characteristic classes in Witt cohomology for tensor products in section~\ref{sec:TensorPower}, which together with Ananyevskiy's splitting principle and our computation of the  characteristic classes  of symmetric powers of a rank two bundle gives a complete calculus for the characteristic classes in Witt cohomology. We conclude with an example, computing the ``quadratic count'' of lines on a smooth hypersurface of degree $2d-1$ in $\P^{d+1}$. This recovers results of Kass-Wickelgren for cubic surfaces \cite{KW2} and gives a purely algebraic treatment of some results of Okonek-Teleman \cite{OkonekTeleman}.

We would like to thank Eva Bayer, Jesse Kass and Kirsten Wickelgren for their comments and suggestions. Eva Bayer drew our attention to Serre's theorem on computing Hasse-Witt invariants of trace forms, and pointed out several other facts about quadratic forms, such as her paper with Suarez \cite{BayerSuarez}, which were all very important for computing the Euler characteristic of the bundles $\tilde{O}(m)$, especially our lemma~\ref{lem:Trace}. We also thank the referee for making a number of very helpful comments and suggestions.

\section{Becker-Gottlieb transfers}\label{sec:Transfer} We fix a noetherian base-scheme $S$ and let $\Sch/S$ denote the category of finite type separated $S$-schemes. 

We have the motivic stable homotopy category 
\[
\SH(-):\Sch/S^\op\to \mathbf{Tr}
\]
where $\mathbf{Tr}$ is the 2-category of symmetric monoidal triangulated categories. Following \cite{Ayoub, CD, HoyoisEquiv}, the functor $\SH(-)$ admits the Grothendieck six operations; we briefly review the aspects of this structure that we will be needing here. We note that the theory in  \cite{Ayoub, HoyoisEquiv} is for quasi-projective $S$-schemes; the extension to separated $S$-schemes of finite type is accomplished in  \cite[Theorem 2.4.50]{CD}..

For each $X$, $\SH(X)$ is the homotopy category of a closed symmetric stable model category  \cite{Jardine}, which makes $\SH(X)$ into a closed symmetric monoidal category. We denote the symmetric monoidal product in $\SH(X)$ by $\wedge_X$ and the adjoint internal Hom by $\sHom_X(-,-)$. The unit   $1_X$ is the  motivic sphere spectrum over $X$, $\mS_X$.  We have for each morphism $f:X\to Y$ in $\Sch/S$ the exact symmetric monoidal functor $f^*:\SH(Y)\to \SH(X)$ with right adjoint $f_*:\SH(X)\to \SH(Y)$, and exact functors $f^!:\SH(Y)\to \SH(X)$, $f_!:\SH(X)\to\SH(Y)$, with $f_!$ left adjoint to $f^!$. 

We will refer to a  commutative monoid $\sE$ in the symmetric monoidal category $\SH(X)$ as a {\em motivic ring spectrum}, that is, $\sE$ is endowed with a multiplication map $\mu:\sE\wedge_X\sE\to \sE$ and a unit map $e:1_X\to \sE$ such that $(\sE, \mu, e)$ is a commutative monoid in $(\SH(X), \wedge_X, 1_X)$.

The adjoint pair $f_!\dashv f^!$ satisfies a projection formula and the adjunctions extend to internal adjunctions: for $f:X\to Y$, there are natural isomorphisms
\begin{align*}
&f_!(f^*y\wedge_X x)\cong y\wedge_Yf_!x\\
&f_*\sHom_X(x, f^!x)\cong \sHom_Y(f_!x, y)
\end{align*}
There is a natural transformation $\eta^f_{!*}:f_!\to f_*$, which is an isomorphism if $f$ is proper (see e.g. \cite[\S 2.2.6]{CD}). Furthermore, for $f$ smooth, the functor $f^*$ admits a left adjoint $f_\#$.  

If $i:X\to Y$ is a closed immersion with open complement $j:U\to Y$, there are localization distinguished triangles
\[
j_!j^!\to \id_{\SH(Y)}\to i_*i^*
\]
and
\[
i_!i^!\to  \id_{\SH(Y)}\to j_*j^*.
\]
In addition, $j_!=j_\#$, $j^!=j^*$ and $i_!=i_*$, the counit $i^*i_*\to \id$ is an isomorphism and the pair $(i^*, j^*)$ is conservative. This gives the following {\em weak Nisnevich descent property}: Let $\{g_i:U_i\to Y\}$ be a Nisnevich cover.  A morphism $f:a\to b$ in $\SH(Y)$ is an isomorphism if and only if $g_i^*f:g_i^*a\to g_i^*b$ is an isomorphism in $\SH(U_i)$ for all $i$ (see \cite[Proposition 3.2.8, Corollary 3.3.5]{CD}).

If $p:V\to X$ is a vector bundle with 0-section $s:X\to V$, we have the endomorphism $\Sigma^V:\SH(X)\to \SH(X)$ defined as $\Sigma^V:=p_\#s_*$. This is an auto-equivalence, with quasi-inverse $\Sigma^{-V}:=s^!p^*$.  For $f:X\to Y$ smooth, there are canonical isomorphisms
\[
f_!\cong f_\#\circ\Sigma^{-T_f};\quad f^!\cong \Sigma^{T_f}\circ f^*
\]
where $T_f$ is the relative tangent bundle of $f$. 

For $\pi_Y:Y\to X$ in $\Sm/X$, we let $Y/X\in \SH(X)$ denote the object $\pi_{Y\#}(1_Y)$ and for $\pi_Y:Y\to X$ in $\Sch/X$, we let $Y_\BM/X\in \SH(X)$ denote the object $\pi_{Y!}(1_Y)$. Sending $Y\in \Sm/X$ to $Y/X$ defines a functor 
\[
-/X:\Sm/X\to \SH(X)
\]
and sending $Y\in \Sch/X$ to $Y_\BM/X$ defines a functor
\[
-_\BM/X:\Sch^\pr/X^\op\to \SH(X),
\]
where $\Sch^\pr/X$ is the subcategory of proper morphisms in $\Sch/X$. 
For a morphism $g:Y\to Z$ in $\Sm/X$, the map $g/X:Y/X\to Z/X$ is induced by the co-unit of the adjunction $e_g:g_!g^!\to \id_{\SH(Z)}$ as the composition
\begin{multline*}
\pi_{Y\#}(1_Y)=\pi_{Y\#}\pi_Y^*(1_X)\cong \pi_{Y!}\pi_Y^!(1_X)\cong 
\pi_{Z!}g_!g^!\pi_Z^!(1_X)\\\xrightarrow{e_g}
\pi_{Z!}\pi_Z^!(1_X)\cong \pi_{Z\#}\pi_Z^*(1_X)=\pi_{Z\#}(1_Z).
\end{multline*}
For a proper morphism $g:Z\to Y$ in $\Sch/X$ the map $g/X_\BM:Y/X_\BM\to Z/X_\BM$ is  the composition
\[
\pi_{Y!}(1_Y)\xrightarrow{u_g}\pi_{Y!}g_*g^*(1_Y)\xrightarrow{(\eta^g_{!*})^{-1}}
\pi_{Y!}g_!g^*(1_Y)\cong \pi_{Z!}(1_Z),
\]
where $u_g:\id_{\SH(Y)}\to g_*g^*$ is the unit of the adjunction. 

\begin{remark} Using the construction of $\SH(X)$ as the homotopy category of the category of symmetric $T$-spectra with respect to the $T$-stable $\A^1$-Nisnevich model structure \cite{Jardine}, the object $Y/X$ is canonically isomorphic to the usual $T$-suspension spectrum $\Sigma^\infty_TY_+$ and for $g:Y\to Z$ a morphism in $\Sm/X$, the map $g/X:Y/X\to Z/X$ is the map $\Sigma_T^\infty g_+:\Sigma^\infty_TY_+\to \Sigma^\infty_TZ_+$. We mention this only to orient the reader; we will not be needing this fact in our discussion.
\end{remark}

For $\pi_Y:Y\to X$ in $\Sm/X$, there is a morphism
\[
\ev_Y:Y_\BM/X\wedge_X Y/X\to 1_X
\]
defined as follows: We consider $Y\times_XY$ as a  $Y$-scheme via $p_2$. We have the canonical isomorphisms
\begin{multline*}
\pi_{Y\#}(p_{2!}(1_{Y\times_XY}))\cong \pi_{Y\times_XY\#}(\Sigma^{-T_{p_2}}(1_{Y\times_XY}))\\\cong \pi_{Y\#}(\Sigma^{-T_{Y/X}}(1_Y))\wedge_X\pi_{Y\#}(1_Y)\\\cong
\pi_{Y!}(1_Y)\wedge_X\pi_{Y\#}(1_Y)\cong
Y_\BM/X\wedge_X Y/X.
\end{multline*}
The diagonal section $s_\delta:Y\to Y\times_XY$ to $p_2$ is a proper morphism in $\Sch/Y$, giving the map
\[
s_\delta^*:p_{2!}(1_{Y\times_XY})\to 1_Y
\]
in $\SH(Y)$. Applying $\pi_{Y\#}$ and using the above-mentioned isomorphism gives the map
\[
\pi_{Y\#}(s_\delta^*):Y_\BM/X\wedge_X Y/X\to Y/X.
\]
The map $\ev_Y$ is then defined as the composition
\[
Y_\BM/X\wedge_X Y/X\xrightarrow{\pi_{Y\#}(s_\delta^*)} Y/X\xrightarrow{\pi_Y/X} X/X=1_X.
\]

For $\pi_Y:Y\to X$ a proper morphism in $\Sch/X$, we have the map
\[
\pi_{Y\BM}/X:1_X\to Y_\BM/X.
\]
If $Y$ is smooth over $X$, we consider $Y\times_XY$ as a smooth $Y$-scheme via $p_1$, giving the map
\[
s_\delta/Y:1_Y=Y/Y\to Y\times_XY/Y=p_{1\#}(1_{Y\times_XY}).
\]
Applying $\pi_{Y!}$ and using the canonical isomorphism
\[
\pi_{Y!}(p_{1\#}(1_{Y\times_XY}))\cong  Y/X\wedge_XY_\BM/X
\]
constructed as above gives the map
\[
\pi_{Y!}(s_\delta/Y):Y_\BM/X\to Y/X\wedge_XY_\BM/X.
\]
For $Y$ smooth and proper over $X$, define the map $\delta_Y:1_X\to Y/X\wedge_XY_\BM/X$ as the composition
\[
1_X\xrightarrow{\pi_{Y\BM}/X}Y_\BM/X\xrightarrow{\pi_{Y!}(s_\delta/Y)}Y/X\wedge_XY_\BM/X.
\]

Recall that an object $x\in \SH(X)$ has the {\em dual} $x^\vee:=\sHom_X(x, 1_X)$, giving the evaluation map
\[
\ev: x^\vee\wedge_X x\to 1_X
\]
adjoint to the identity on $x^\vee$. The map $\ev$ in turn
induces maps   $can_a:x^\vee\wedge_Xa\to \sHom_X(x, a)$ and $can'_a: a\wedge_Xx\to \sHom_X(x^\vee, a)$;  $x$ is {\em strongly dualizable} if the morphism
$can_x:x^\vee\wedge_Xx\to \sHom_X(x, x)$ is an isomorphism.   This is equivalent to the existence of an object $y\in \SH(X)$ and morphisms $\delta_x:1_X\to x\wedge_Xy$, $\ev_x:y\wedge_X x\to 1_X$, such that 
\begin{align}\label{align:DualityIds}
&(\id_x\wedge_X\ev_x)\circ(\delta_x\wedge_X\id_x)=\id_x;\\
&(\ev_x\wedge_X\id_y)\circ(\id_y\wedge_X\delta_x)=\id_y.\notag
\end{align}
(see \cite[Theorem 2.6]{MayPic}). In this case, there is a canonical isomorphism $y\cong x^\vee$ with $\ev_x$ going over to $\ev$, and $\delta_x$ going over to the map
\[
1_X\to x\wedge x^\vee
\]
given as the image of $\id_x$ under the sequence of isomorphisms
\begin{align*}
\Hom_{\SH(X)}(x,x)&\cong 
\Hom_{\SH(X)}(1_X, \sHom_X(x,x))\\
&\xleftarrow{can_{x*}} \Hom_{\SH(X)}(1_X,  x^\vee\wedge_X x)\\
&\xrightarrow{\tau_{x^\vee, x*}}  \Hom_{\SH(X)}(1_X,  x\wedge_X x^\vee).
\end{align*}

\begin{proposition}\label{prop:DualProp} 1. For $f:Y\to X$ smooth and proper, the triple $(Y_\BM/X, \delta_Y, \ev_Y)$ exhibits $Y/X$ as a strongly dualizable object of $\SH(X)$ with dual $Y_\BM/X$.\\
2. Let $\{j_i:U_i\to X, i=1,\ldots, n\}$ be a finite Nisnevich open cover of $X$. Suppose that for $x\in \SH(X)$, $j_i^*x\in \SH(U_i)$ is strongly dualizable for all $i$. Then $x$ is a strongly dualizable object of $\SH(X)$.\\
3. The class of strongly dualizable objects in $\SH(X)$ is closed under $\wedge_X$ and
\[
(x_1\wedge_Xx_2)^\vee\cong x_1^\vee\wedge_Xx_2^\vee.
\]
4. If $f:X\to Y$ is a morphism in $\Sch/S$ and $y\in \SH(Y)$ is strongly dualizable, then $f^*y$ is strongly dualizable with dual $f^*(y^\vee)$.\\
5. The class of strongly dualizable objects in $\SH(X)$ satisfies the 2 out of 3 property with respect to distinguished triangles: if $x_1\xrightarrow{u} x_2\xrightarrow{v} x_3\xrightarrow{w} x_1[1]$ is a distinguished triangle in $\SH(X)$ and two of the $x_i$ are strongly dualizable, then so is the third. Moreover, the triangle
\[
x_3^\vee\xrightarrow{v^\vee}x_2^\vee\xrightarrow{u^\vee} x_1^\vee\xrightarrow{-w^\vee[1]}x_3[1]
\]
is distinguished.
\end{proposition}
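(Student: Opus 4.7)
The plan for part (1), which is the heart of the statement, is to verify the two triangle identities \eqref{align:DualityIds} for the data $(Y_{\BM}/X, \delta_Y, \ev_Y)$. The key ingredients are the smooth-proper identifications $\pi_{Y!}\cong \pi_{Y\#}\Sigma^{-T_{Y/X}}$ (smoothness) and $\eta^{\pi_Y}_{!*}\colon\pi_{Y!}\xrightarrow{\sim}\pi_{Y*}$ (properness), together with base change along the cartesian square of projections $p_1,p_2\colon Y\times_XY\to Y$. Using these, $Y_{\BM}/X\wedge_X Y/X$ is identified with $\pi_{Y\#}p_{2!}(1_{Y\times_XY})$, and $Y/X\wedge_X Y_{\BM}/X$ with $\pi_{Y!}p_{1\#}(1_{Y\times_XY})$, with the diagonal section $s_\delta\colon Y\to Y\times_XY$ fitting compatibly in each picture. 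Each composite $(\id\wedge_X\ev_Y)\circ(\delta_Y\wedge_X\id)$ and $(\ev_Y\wedge_X\id)\circ(\id\wedge_X\delta_Y)$ then reduces to the identity by combining the triangle identities for the adjunctions $\pi_{Y!}\dashv \pi_Y^!$ and $\pi_{Y\#}\dashv \pi_Y^*$ with the fact that the counit $s_\delta^*s_{\delta*}\to\id$ is an isomorphism, since $s_\delta$ is a closed immersion.

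Part (2) uses the candidate dual $x^\vee:=\sHom_X(x,1_X)$. Each $j_i\colon U_i\to X$ is an open immersion, so $j_i^*$ is symmetric monoidal and commutes with $\sHom_X$; hence $j_i^*(x^\vee)\cong (j_i^*x)^\vee$ and $j_i^*(\text{can}_x)$ is identified with $\text{can}_{j_i^*x}$. By hypothesis each of the latter is an isomorphism, so the weak Nisnevich descent property recalled in the text forces $\text{can}_x$ to be an isomorphism in $\SH(X)$. Part (3) is a formal manipulation in any closed symmetric monoidal category: the duality data for $x_1\wedge_X x_2$ is assembled from that of the $x_i$ by tensoring the $\delta_i$'s and $\ev_i$'s and inserting symmetry isomorphisms to rearrange the tensor factors. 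Part (4) is immediate from the fact that $f^*$ is strong symmetric monoidal with $f^*(1_Y)\cong 1_X$, hence sends the diagrams \eqref{align:DualityIds} for $y$ to analogous diagrams exhibiting $f^*(y^\vee)$ as dual to $f^*y$.

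For part (5), I would first apply the contravariant exact functor $\sHom_X(-,1_X)$ to the given distinguished triangle to produce a candidate distinguished triangle $x_3^\vee\to x_2^\vee\to x_1^\vee\to x_3^\vee[1]$ on duals. For any $a\in\SH(X)$, smashing with $a$ gives a distinguished triangle of the $x_i^\vee\wedge_Xa$, which maps via the comparison maps $\text{can}_{x_i,a}\colon x_i^\vee\wedge_X a\to \sHom_X(x_i,a)$ to the distinguished triangle obtained by applying $\sHom_X(-,a)$ to the original. If two of the $x_i$ are strongly dualizable, two of the three $\text{can}_{x_i,a}$ are isomorphisms, so the 5-lemma for triangulated categories forces the third to be an isomorphism. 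Varying $a$ establishes dualizability of the remaining $x_i$, and the triangle on duals is exactly the one produced at the start, with the sign on $-w^\vee[1]$ arising from the contravariance.

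The main obstacle is undoubtedly part (1): while parts (2)--(5) are formal consequences of general closed symmetric-monoidal triangulated category theory combined with weak Nisnevich descent, part (1) requires a careful diagram chase through the six-functor formalism, unpacking the base-change isomorphisms and the various unit/counit compatibilities used in the very constructions of $\delta_Y$ and $\ev_Y$ in order to see that the triangle identities really do hold.
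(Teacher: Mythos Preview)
Your proposal is correct, and for parts (2)--(4) it matches the paper's proof essentially verbatim: weak Nisnevich descent applied to $\mathrm{can}_x$ for (2), the $(\delta,\ev)$ characterization for (3), and the fact that $f^*$ is symmetric monoidal for (4).

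The difference is that for parts (1) and (5) the paper does not carry out the arguments you sketch but instead cites the literature: part (1) is delegated to Cisinski--D\'eglise \cite[Theorem 2.4.50, Proposition 2.4.31]{CD}, and part (5) to May \cite[Theorem 0.1]{May}. Your outline for (1)---base change along $p_1,p_2$, the identifications $\pi_{Y!}\cong\pi_{Y\#}\Sigma^{-T_{Y/X}}$ and $\pi_{Y!}\cong\pi_{Y*}$, and reduction to the adjunction triangle identities---is precisely the strategy underlying the cited result, so you have correctly identified what needs to be checked, though the actual diagram chase is lengthy. Your argument for (5), comparing the triangles $x_i^\vee\wedge_X a$ and $\sHom_X(x_i,a)$ via the natural transformation $\mathrm{can}_{x_i,a}$ and invoking the five-lemma, is likewise the core of May's proof; the paper simply records that May has verified the compatibility axioms needed to make this go through in a closed symmetric monoidal triangulated category such as $\SH(X)$. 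So your route is not genuinely different, just more explicit where the paper is content to cite.
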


\begin{proof} (1) follows from \cite[Theorem 2.4.50, Proposition 2.4.31]{CD}. For (2), take $a\in \SH(X)$ and consider the map $can_x:x^\vee\wedge_X x\to \sHom_X(x,x)$  Then $j_i^*can$ is the canonical map $:j_i^*x^\vee\wedge_{U_i} j_i^*x\to \sHom_{U_i}(j_i^*x,j_i^*x)$, after making the  evident identifications. Thus $j_i^*can$ is an isomorphism for all $i$ and $can$ is then an isomorphism  by the weak Nisnevich descent property.  (3) follows from the characterization of strongly dualizable objects via the existence of the maps $\delta,\ev$ satisfying the identities \eqref{align:DualityIds}; (4) is similar, noting that $f^*$ is a symmetric monoidal functor. The assertion (5) is proven by May \cite[Theorem 0.1]{May} for a symmetric monoidal triangulated category subject to certain axioms, which are satisfied by $\SH(X)$.
\end{proof}

\begin{definition} Let $x\in \SH(X)$ be a strongly dualizable object. The {\em Euler characteristic} $\chi(x/X)\in \End_{\SH(X)}(1_X)$ is the composition
\[
1_X\xrightarrow{\delta_x}x\wedge_Xx^\vee\xrightarrow{\tau_{x, x^\vee}}x^\vee\wedge_Xx\xrightarrow{\ev_x}1_X.
\]
\end{definition}

\begin{remark} If $X=\Spec F$ for $F$ a perfect field, then Morel's theorem   \cite[Theorem 6.4.1]{MorelICTP}  identifies $\End_{\SH(X)}(1_X)$ with the Grothendieck-Witt group $\GW(F)$. 
\end{remark}

\begin{definition} \label{def:transfer} 1. Let $f:E\to B$ be in $\Sm/B$ with $E/B$ a strongly dualizable object in $\SH(B)$. The {\em relative transfer} $\Tr_{f/B}:1_B\to E/B$ is defined as follows: Consider the diagonal morphism 
\[
\Delta_E:E\to E\times_BE.
\]
Applying $f_\#$ gives the morphism
\[
\Delta_{E}/B:E/B\to E\times_BE/B\cong E/B\wedge_BE/B.
\]
By duality, we have the isomorphism
\[
(-)^\tr:\Hom_{\SH(B)}(E/B, E/B\wedge_BE/B)\cong \Hom_{\SH(B)}(E/B^\vee\wedge_BE/B, E/B),
\]
defined by sending a morphism $g:E/B\to E/B\wedge_BE/B$ to the composition
\[
E/B^\vee\wedge_BE/B\xrightarrow{\id\wedge g}E/B^\vee\wedge_BE/B\wedge_BE/B\xrightarrow{\ev_{E/B}\wedge\id}1_B\wedge_BE/B\cong E/B.
\]
We set
\[
\Tr_{f/B}:=\Delta_{E}/B^\tr\circ\tau_{E/B,E/B^\vee}\circ \delta_{E/B}.
\]
2. Suppose we have $f:E\to B$ in $\Sm/B$ as in (1), with $\pi_B:B\to S$ in $\Sm/S$, giving the structure morphism $\pi_E:E\to S$, $\pi_E:=\pi_B\circ f$. Define the {\em transfer} $\Tr(f/S):B/S\to E/S$ by
\[
\Tr(f/S):=\pi_{B\#}(\Tr_{f/B}),
\]
where we use the canonical isomorphism
\[
E/S:=\pi_{E\#}(1_E)\cong \pi_{B\#}(f_\#(1_E))=\pi_{B\#}(E/B).
\]
\end{definition}

\begin{lemma} Given $f:E\to B$ as in Definition~\ref{def:transfer}(1), let $g:B'\to B$ be a morphism in $\Sch/S$ and let $f':E'\to B'$ be the pull-back $E':=E\times_BB'$, $f'=p_2$. Then $f':E'\to B'$ is strongly dualizable in $\SH(B')$, there are canonical isomorphisms $E'/B'\cong g^*(E/B)$, $1_{B'}\cong g^*(1_B)$ and via these isomorphisms, we have
\[
\Tr(f'/B')=g^*(\Tr(f/B)).
\]
Moreover, if $\pi_B:B\to S$ is in $\Sm/S$, if $g_0:S'\to S$ is a morphism in $\Sch/S$, if $B'=S'\times_SB$ and $g:B'\to B$ is the projection, then there is a canonical isomorphism  $B'/S'\cong g_0^*(B/S)$ and via this isomorphism
\[
\Tr(f'/S')=g_0^*(\Tr(f/S)).
\]
\end{lemma}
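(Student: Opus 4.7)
The plan is to reduce everything to two general principles of the six-functor formalism: (a) the pullback functor $g^*$ is symmetric monoidal, so it preserves units, the symmetry isomorphism, and (by Proposition~\ref{prop:DualProp}(4)) strong dualizability together with canonical duality data; and (b) smooth base change, which for the Cartesian square with vertical arrows $f:E\to B$ (smooth) and $f':E'\to B'$ and horizontal arrows $g:B'\to B$ and $g':E'\to E$ gives a canonical natural isomorphism $g^*\circ f_\# \cong f'_\# \circ g'^*$.

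From (a) we have the canonical isomorphism $1_{B'}\cong g^*(1_B)$, and applying (b) to $1_E$ yields
\[
E'/B' = f'_\#(1_{E'}) = f'_\# g'^*(1_E) \cong g^* f_\#(1_E) = g^*(E/B).
\]
Proposition~\ref{prop:DualProp}(4) then shows $E'/B'$ is strongly dualizable in $\SH(B')$ with dual $g^*((E/B)^\vee)$, and moreover the duality data $\delta_{E'/B'}$ and $\ev_{E'/B'}$ are obtained by applying $g^*$ to those of $E/B$ (modulo the monoidal coherence isomorphisms of $g^*$). Next, since $g^*$ is symmetric monoidal and smooth base change is compatible with fiber products, under the identification $E'\times_{B'}E' \cong g'^*(E\times_B E)$ the diagonal $\Delta_{E'}$ is identified with $g'^*(\Delta_E)$, so that $\Delta_{E'}/B'$ is identified with $g^*(\Delta_E/B)$ under the base change isomorphism. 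Since every ingredient in Definition~\ref{def:transfer}(1) --- the diagonal morphism, the duality data $\delta$ and $\ev$, the symmetry $\tau$, and the transpose construction $(-)^\tr$ --- commutes with the symmetric monoidal functor $g^*$, we conclude $\Tr(f'/B') = g^*(\Tr(f/B))$.

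For the second claim, I apply smooth base change to the Cartesian square with vertical smooth morphisms $\pi_B:B\to S$ and $\pi_{B'}:B'\to S'$ and horizontal morphisms $g_0:S'\to S$ and $g:B'\to B$; this yields $g_0^*\circ\pi_{B\#} \cong \pi_{B'\#}\circ g^*$. Evaluating on $1_B$ gives $B'/S' \cong g_0^*(B/S)$, and evaluating on the identity $\Tr_{f'/B'} = g^*(\Tr_{f/B})$ established above gives
\[
g_0^*(\Tr(f/S)) = g_0^*(\pi_{B\#}(\Tr_{f/B})) \cong \pi_{B'\#}(g^*(\Tr_{f/B})) = \pi_{B'\#}(\Tr_{f'/B'}) = \Tr(f'/S').
\]

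The main obstacle is coherence bookkeeping: I must verify that the specific duality data of Proposition~\ref{prop:DualProp}(1) --- built out of the unit and counit of $f_! \dashv f^!$ together with the projection formula and the identifications $f_!\cong f_\#\circ\Sigma^{-T_f}$, $f^!\cong \Sigma^{T_f}\circ f^*$ --- matches up with the duality data of the pullback object, rather than merely some abstract choice agreeing up to isomorphism. This in turn reduces to the standard compatibility of smooth base change with the projection formula, with the proper base change natural transformation $\eta^f_{!*}$, and with the units and counits of the adjunctions, all of which are part of the six-functor package invoked from \cite{CD}.
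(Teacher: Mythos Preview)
Your proof is correct and follows essentially the same approach as the paper: both use the exchange (smooth base change) isomorphism $Ex^*_\#$ to identify $E'/B'\cong g^*(E/B)$, then invoke that $g^*$ is symmetric monoidal to transport the duality data and the map $\Delta_E/B$, concluding $\Tr(f'/B')=g^*(\Tr(f/B))$. You in fact give slightly more detail than the paper does, spelling out the second assertion (which the paper leaves to the reader) via another instance of smooth base change $g_0^*\pi_{B\#}\cong\pi_{B'\#}g^*$; your closing remark on coherence accurately identifies where the routine bookkeeping lies.
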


\begin{proof} For the the first assertion, we have the exchange isomorphism (see \cite[\S1.1.6]{CD} or \cite[\S1.4.5]{Ayoub})
\[
Ex^*_\#:p_{1\#}\circ p_2^*\to g^*\circ f_\#\
\]
associated to the Cartesian diagram
\[
\xymatrix{
E\times_BB'\ar[r]^{p_2}\ar[d]_{p_1}&B'\ar[d]^g\\
E\ar[r]_f&B,
}
\]
which gives the isomorphism
\[
g^*(E/B)=g^*\circ f_\#(1_E)\xrightarrow{(Ex^*_\#)^{-1}}p_{1\#}\circ p_2^*(1_E)\cong p_{1\#}(1_{E'})=E'/B'.
\]
We have as well the canonical isomorphism $1_{B'}\cong g^*(1_B)$. Since $g^*$ is a symmetric monoidal functor, the assumption that $E/B$ is strongly dualizable in $\SH(B)$ implies that $E'/B'$ is strongly dualizable in $\SH(B')$ with ${E'/B'}^\vee$ canonically isomorphic to $g^*(E/B^\vee)$. Moreover, via these isomorphisms, we have
\[
\delta_{E'/B'}=g^*(\delta_{E/B}), \ \ev_{E'/B'}=g^*(\ev_{E/B}).
\]
Finally, since the exchange isomorphism is a natural transformation, we have
\[
g^*({\Delta_E/B}^\tr)={\Delta_{E'}/B'}^\tr
\]
and thus 
\begin{align*}
\Tr(f'/B)&:={\Delta_{E'}/B'}^\tr\circ\tau_{E'/B', {E'/B'}^\vee}\circ\delta_{E'/B'}\\
&=g^*({\Delta_E/B}^\tr)\circ g^*(\tau_{E/B, {E/B}^\vee})\circ g^*(\delta_{E/B})\\
&=g^*(\Tr(f/B)).
\end{align*}

The proof that $\Tr(f'/S')=g_0^*(\Tr(f/S))$ is similar and is left to the reader.
\end{proof}

\begin{lemma} Let $f:E\to B$ be in $\Sm/B$, with $E/B$ strongly dualizable in $\SH(B)$. Then 
\[
f/B\circ \Tr(f/B):1_B\to 1_B
\]
is equal to $\chi(E/B)$. 
\end{lemma}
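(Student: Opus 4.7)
The plan is to unfold both $\chi(E/B)$ and $\Tr(f/B)$ via their definitions, reducing the statement to a single compatibility of the diagonal $\Delta_E$ with the structure map $f$, rephrased motivically through the functor $(-)/B$. Both $\chi(E/B)$ and $\Tr(f/B)$ are compositions beginning with $\delta_{E/B}$ followed by the symmetry $\tau_{E/B,(E/B)^\vee}$; they differ only in the last arrow, which is $\ev_{E/B}$ for $\chi(E/B)$ versus $(\Delta_E/B)^\tr$ for $\Tr(f/B)$. Hence it is enough to verify
\[
f/B \circ (\Delta_E/B)^\tr = \ev_{E/B} \qquad \text{as maps } (E/B)^\vee \wedge_B E/B \to 1_B.
\]

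Unfolding $(\Delta_E/B)^\tr$ as $(\ev_{E/B} \wedge \id_{E/B}) \circ (\id_{(E/B)^\vee} \wedge \Delta_E/B)$ and post-composing with $f/B$, the arrow $f/B: E/B \to 1_B$ can be pulled past $\ev_{E/B}\wedge\id_{E/B}$ via the unit isomorphism $1_B \wedge_B E/B \cong E/B$, so that the left-hand side becomes
\[
\ev_{E/B} \circ \bigl(\id_{(E/B)^\vee} \wedge \bigl((\id_{E/B} \wedge f/B) \circ \Delta_E/B\bigr)\bigr).
\]
The problem thus collapses to proving the single identity $(\id_{E/B} \wedge f/B) \circ \Delta_E/B = \id_{E/B}$.

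This last identity is the image under $(-)/B$ of the tautology $p_1 \circ \Delta_E = \id_E$, where $p_1 : E \times_B E \to E$ is the first projection. Indeed, the functor $(-)/B: \Sm/B \to \SH(B)$ is symmetric monoidal: it carries $\times_B$ to $\wedge_B$ (using the projection formula for the $\#$-pushforward along smooth morphisms) and the unit $B$ to $1_B$, giving a canonical isomorphism $(E \times_B E)/B \cong E/B \wedge_B E/B$. In $\Sm/B$, the projection $p_1$ factors as $\id_E \times_B f: E \times_B E \to E \times_B B \cong E$, and under $(-)/B$ this becomes $\id_{E/B} \wedge f/B: E/B \wedge_B E/B \to E/B \wedge_B 1_B \cong E/B$. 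Applying $(-)/B$ to $p_1 \circ \Delta_E = \id_E$ yields the desired equality, and combining the steps gives $f/B \circ \Tr(f/B) = \ev_{E/B} \circ \tau_{E/B,(E/B)^\vee} \circ \delta_{E/B} = \chi(E/B)$. The only subtle point is the careful bookkeeping of the symmetric-monoidal coherence isomorphisms; once these are set up, the argument is entirely formal.
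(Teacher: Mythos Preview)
Your proof is correct and follows essentially the same approach as the paper. Both arguments reduce to the identity $f/B \circ (\Delta_E/B)^\tr = \ev_{E/B}$, expand $(\Delta_E/B)^\tr$, and then use that $(\id_{E/B}\wedge f/B)\circ \Delta_E/B = \id_{E/B}$; the paper records this last step via a commutative diagram, while you make explicit that it is the image under the symmetric monoidal functor $(-)/B$ of $p_1\circ\Delta_E=\id_E$.
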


\begin{proof} $f/B\circ \Tr(f/B)$ is the composition
\[
1_B\xrightarrow{\delta_{E/B}}E/B\wedge_BE/B^\vee\xrightarrow{\tau_{E/B,E/B^\vee}}
E/B^\vee\wedge_BE/B
\xrightarrow{\Delta_E/B^\tr}E/B\xrightarrow{f/B}1_B.
\]
The map $\Delta_E/B^\tr$ is the composition
\begin{multline*}
E/B^\vee\wedge_BE/B\xrightarrow{\id\wedge\Delta_E/B}
E/B^\vee\wedge_BE/B\wedge_BE/B\\ \xrightarrow{\ev_{E/B}\wedge\id}
1_B\wedge_BE/B\cong E/B
\end{multline*}
We have the commutative diagram
\[
\xymatrixcolsep{40pt}
\xymatrix{
&E/B^\vee\wedge_BE/B\ar[dl]_{\id\wedge\Delta_E/B}\ar@/^60pt/[ddd]^{\ev_{E/B}}\\
E/B^\vee\wedge_BE/B\wedge_BE/B \ar[d]_{\ev_{E/B}\wedge\id}
\ar[r]^{\id\wedge\id\wedge f/B}&E/B^\vee\wedge_BE/B\wedge_B1_B\ar[d]^{\ev_{E/B}\wedge\id}\ar[u]_\sim\\
1_B\wedge_BE/B\ar[d]^\wr&1_B\wedge_B1_B\ar[d]^\wr\\
E/B\ar[r]_{f/B}&1_B
}
\]
Thus
\[
f/B\circ \Tr(f/B)=\ev_{E/B}\circ\tau_{E/B,E/B^\vee}\circ \delta_{E/B}=\chi(E/B).
\]
\end{proof}

\begin{remark} Hoyois \cite[Remark 3.5]{HoyoisGL} has defined the relative transfer for a smooth proper map $f:E\to B$ using the six-functor formalism, and has noted the identity $f/B\circ \Tr(f/B)=\chi(E/B)$ in this case.
\end{remark}

\begin{lemma} Suppose $\pi_B:B\to S$ is in $\Sm/S$, and  $E=F\times_SB$ for some  $\pi_F:F\to S$ in $\Sm/S$, with $f:E\to B$ the projection. Suppose that $F/S$ is strongly dualizable. Then $E/B$ is strongly dualizable and $f/S\circ\Tr(f/S)$ is multiplication by $\chi(F/S)$.
\end{lemma}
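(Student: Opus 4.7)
My plan is to reduce everything to the preceding lemma via base change along $\pi_B: B\to S$, and then push forward by $\pi_{B\#}$, using the projection formula to interpret the result as multiplication by $\chi(F/S)$.

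Since $E = F\times_S B$, the map $f: E\to B$ is the pullback of $\pi_F: F\to S$ along $\pi_B: B\to S$. The base-change lemma just proved (applied with $S$ in place of $B$ and $g = \pi_B$) then shows that $E/B$ is strongly dualizable in $\SH(B)$, provides a canonical identification $E/B \cong \pi_B^*(F/S)$ coming from the exchange isomorphism $Ex^*_\#$, and gives the equality $\Tr_{f/B} = \pi_B^*(\Tr_{\pi_F/S})$. Naturality of $Ex^*_\#$ likewise identifies $f/B: E/B\to 1_B$ with $\pi_B^*(\pi_F/S): \pi_B^*(F/S)\to 1_B$. Combining these with the lemma immediately preceding the one we are proving, which gives $\pi_F/S\circ \Tr(\pi_F/S) = \chi(F/S)$, I obtain
\[
f/B\circ \Tr_{f/B} = \pi_B^*(\chi(F/S))
\]
as endomorphisms of $1_B$.

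It remains to apply $\pi_{B\#}$. By construction $\Tr(f/S) = \pi_{B\#}(\Tr_{f/B})$, and by functoriality of $-/S$ we have $f/S = \pi_{B\#}(f/B)$, so the displayed equality gives
\[
f/S\circ \Tr(f/S) = \pi_{B\#}\bigl(\pi_B^*(\chi(F/S))\bigr) : B/S \to B/S.
\]
The projection formula for the left adjoint $\pi_{B\#}$ furnishes a natural isomorphism $\pi_{B\#}(\pi_B^*(x)) \cong x\wedge_S B/S$, natural in $x\in \SH(S)$. Evaluating naturality on the endomorphism $\chi(F/S): 1_S\to 1_S$ identifies the right-hand side with $\chi(F/S)\wedge_S \id_{B/S}$ under the canonical iso $B/S\cong 1_S\wedge_S B/S$, which is by definition multiplication by $\chi(F/S)$.

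The only point requiring care is the naturality claim in the first step, namely that under $E/B\cong \pi_B^*(F/S)$ the map $f/B$ corresponds to $\pi_B^*(\pi_F/S)$; the analogous statement for $\Tr_{f/B}$ is part of the base-change lemma already proven. This is a direct unwinding of the definition of the functor $-/B$ on the Cartesian square defining $E$, so I do not anticipate any genuine obstacle in carrying out the plan.
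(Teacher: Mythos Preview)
Your proposal is correct and follows essentially the same route as the paper: identify $E/B\cong\pi_B^*(F/S)$ and $\Tr_{f/B}=\pi_B^*(\Tr_{\pi_F/S})$ via base change, use the preceding lemma to get $f/B\circ\Tr_{f/B}=\pi_B^*(\chi(F/S))$, then apply $\pi_{B\#}$ and the projection formula. The paper is slightly terser (it simply observes that $\pi_B^*$ is symmetric monoidal rather than citing the base-change lemma explicitly, and asserts the final identification with multiplication by $\chi(F/S)$ without naming the projection formula), but the argument is the same.
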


\begin{proof} Under our assumptions, we have $E/B=\pi_B^*(F/S)$. Since $\pi_B^*$ is a symmetric monoidal functor, it follows that $E/B^\vee=\pi_B^*(F/S^\vee)$ and $\Tr(E/B)=\pi_B^*(\Tr(F/S))$. Thus 
\begin{align*}
f/S\circ \Tr(E/S)&=\pi_{B\#}(f/B\circ\pi_B^*(\Tr(F/S))),\\
&=\pi_{B\#}(\pi_B^*(\pi_F/S)\circ \pi_B^*(\Tr(F/S)))\\
&=\pi_{B\#}(\pi_B^*(\pi_F/S\circ \Tr(F/S)))\\
&=\pi_{B\#}(\pi_B^*(\chi(F/S)))
\end{align*}
which is multiplication by $\chi(F/S)$.
\end{proof}

Let $f:E\to B$ be a finite type morphism, with $B\in \Sch/S$ and let $F$ be in $\Sch/S$. We say that $f$ is a Nisnevich locally trivial bundle  with fiber $F$ if there is a covering family $\{g_i:U_i\to B, i\in I\}$ for the Nisnevich topology on $B$ and isomorphisms of $U_i$-schemes $E\times_BU_i\cong  F\times_SU_i$ for each $i\in I$.

\begin{theorem} \label{thm:motBG} Let $F$ and $B$ be in $\Sm/S$ and let $f:E\to B$ in $\Sm/B$ be a  Nisnevich locally trivial bundle  with fiber $F$. \\
1. If $F/S$ is strongly dualizable in $\SH(S)$, then $E/B$ is strongly dualizable in $\SH(B)$.\\
2. Suppose that $F/S$ is strongly dualizable in $\SH(S)$. Let $\sE\in\SH(S)$ be a motivic ring spectrum  such that the map $\chi(F/S):1_S\to 1_S$ induces an isomorphism on $\sE^{0,0}(S)$. Then the map
\[
f/S^*:\sE^{**}(B/S)\to \sE^{**}(E/S)
\]
is split injective. 
\end{theorem}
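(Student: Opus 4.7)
The plan is to exploit Nisnevich-local triviality of $f$ to reduce both statements to the trivial-bundle case handled by the preceding lemmas. Since $B$ is of finite type over $S$, I fix a finite Nisnevich cover $\{g_i\colon U_i\to B\}_{i=1}^n$ trivializing $f$ (so $E\times_BU_i\cong F\times_SU_i$ over $U_i$), and set $\pi_{U_i}:=\pi_B\circ g_i$, writing $f_i\colon E\times_BU_i\to U_i$ for the projection.

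For (1), the exchange isomorphism $Ex^*_\#$ identifies $g_i^*(E/B)\cong (E\times_BU_i)/U_i\cong \pi_{U_i}^*(F/S)$. Since $F/S$ is strongly dualizable in $\SH(S)$, Proposition~\ref{prop:DualProp}(4) makes $\pi_{U_i}^*(F/S)$ strongly dualizable in $\SH(U_i)$, and Proposition~\ref{prop:DualProp}(2) then promotes this Nisnevich-local dualizability to strong dualizability of $E/B$ in $\SH(B)$.

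For (2), my aim is to show that $\Tr(f/S)^*$ splits $f/S^*$ on $\sE$-cohomology, up to the invertible factor $\chi(F/S)$. The preceding lemma gives $f/B\circ\Tr(f/B)=\chi(E/B)$ in $\End_{\SH(B)}(1_B)$. Combining the base-change identity $g_i^*\Tr(f/B)=\Tr(f_i/U_i)$ with the trivial-bundle calculation from the proof of the preceding lemma (which yields $f_i/U_i\circ\Tr(f_i/U_i)=\pi_{U_i}^*\chi(F/S)$ for the trivialized $f_i$), I obtain
\[
g_i^*\chi(E/B)=\pi_{U_i}^*\chi(F/S)=g_i^*\pi_B^*\chi(F/S)
\]
for every $i$. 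Nisnevich descent of morphisms in $\SH(B)$ then upgrades this pointwise identity to the global equality $\chi(E/B)=\pi_B^*\chi(F/S)$. Applying $\pi_{B\#}$ and invoking the projection formula $\pi_{B\#}\pi_B^*(-)\cong(-)\wedge_S B/S$ gives
\[
f/S\circ\Tr(f/S)=\chi(F/S)\wedge_S\id_{B/S}\colon B/S\to B/S,
\]
so that $\Tr(f/S)^*\circ f/S^*$ acts on $\sE^{**}(B/S)$ as multiplication by the image $\bar\chi$ of $\chi(F/S)$ in $\sE^{0,0}(S)$. The hypothesis that $\chi(F/S)$ induces an isomorphism on $\sE^{0,0}(S)$ forces $\bar\chi$ to be a unit there, so $\bar\chi^{-1}\cdot\Tr(f/S)^*$ is a left inverse to $f/S^*$, giving split injectivity.

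The main subtlety I anticipate is the descent step upgrading pointwise equality of endomorphisms of $1_B$ to a global equality. The excerpt explicitly invokes only the weak Nisnevich descent property (conservativity of $\{g_i^*\}$ for detecting isomorphisms), whereas what I really need is that $\Hom_{\SH(B)}(a,b)$ is a Nisnevich sheaf on $B$ for morphisms, not merely for isomorphisms. This is a standard consequence of the full Nisnevich (hyper)descent for $\SH$ recorded in the references, and so does not present a serious obstacle; the rest of the argument is routine bookkeeping with the six-functor formalism and the projection formula.
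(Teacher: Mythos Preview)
Your argument for (1) matches the paper's. The issue is in (2), precisely at the descent step you flag.

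Full Nisnevich descent for $\SH$ says that the assignment $U\mapsto\SH(U)$ is a sheaf of $\infty$-categories; equivalently, mapping \emph{spaces} $\Map_{\SH(-)}(a,b)$ satisfy Nisnevich descent. It does \emph{not} follow that the presheaf of sets $U\mapsto\Hom_{\SH(U)}(g^*a,g^*b)$ is a sheaf, because $\pi_0$ of a homotopy limit need not inject into the limit of the $\pi_0$'s: for a cover $U_1,U_2$ with intersection $U_{12}$, two classes in $\Hom_{\SH(B)}$ that agree on each $U_i$ can differ by an element coming from $\pi_1\Map_{\SH(U_{12})}(a,b)$. In particular, the equality $\chi(E/B)=\pi_B^*\chi(F/S)$ in $\End_{\SH(B)}(1_B)$ is not something you can read off from the pointwise identities $g_i^*\chi(E/B)=g_i^*\pi_B^*\chi(F/S)$, and your appeal to ``standard full descent'' does not close this gap.

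The paper sidesteps this by never asserting that equality. It passes, via the $f_\#\dashv f^*$ adjunction, to the endomorphism of $\pi_B^*\sE^{a,b}(1_B)$ induced by precomposition with $\chi(E/B)$, and then applies weak Nisnevich descent to an actual morphism in $\SH(B)$, namely $\chi(E/B)\wedge_B\id_{\pi_B^*\sE}:\pi_B^*\sE\to\pi_B^*\sE$. On each $U_i$ this pulls back to $\pi_{U_i}^*\chi(F/S)\wedge\id$, which by the ring structure of $\sE$ is multiplication by the unit $e\circ\pi_{U_i}^*\chi(F/S)\in\sE^{0,0}(U_i)$ and hence has an explicit inverse (multiplication by its inverse). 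Conservativity of the family $\{g_i^*\}$ then promotes these local isomorphisms to a global one, and the induced map on $[1_B,\Sigma^{a,b}\pi_B^*\sE]$ is the desired isomorphism. The fix to your argument is therefore not to globalize the identity of Euler characteristics, but to globalize the \emph{invertibility} of the resulting $\sE$-module map---which is exactly what weak Nisnevich descent is designed to detect.
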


\begin{proof} The assertion (1)  follows from the isomorphism $g_i^*(E/B)\cong F\times_SU_i/U_i$, and Proposition~\ref{prop:DualProp}(2, 4). 
 
 For (2), we have 
 \begin{multline*}
 \sE^{a,b}(E/S)=\Hom_{\SH(S)}(\pi_{B\#}(E/B), \Sigma^{a,b}\sE)\\=
 \Hom_{\SH(B)}(E/B, \Sigma^{a,b}\pi_B^*\sE)=\pi_B^*\sE^{a,b}(E/B);
 \end{multline*}
similarly $\sE^{a,b}(B/S)=\pi_B^*\sE^{a,b}(1_B)$ and via these isomorphisms, the map $f/S^*:\sE^{a,b}(B/S)\to \sE^{a,b}(E/S)$ is the map $f/B^*: \pi_B^*\sE^{a,b}(1_B)\to \pi_B^*\sE^{a,b}(E/B)$.
We claim that the endomorphism
 \[
 \Tr(E/B)^*\circ f/B^*:\pi_B^*\sE^{a,b}(1_B)\to \pi_B^*\sE^{a,b}(1_B)
 \]
 is an isomorphism; assuming this the case, the map $ \Tr(E/B)^*$ gives a left quasi-inverse to $f/S^*$.  
 
 By weak Nisnevich descent, it suffices to show that the pullback
 \[
 g_i^*( \Tr(E/B)^*\circ f/B^*):g_i^*\pi_B^*\sE^{a,b}(1_{U_i})\to g_i^*\pi_B^*\sE^{a,b}(1_{U_i})
 \]
 is an isomorphism for all $i$. But 
 \[
 g_i^*( \Tr(E/B)^*\circ f/B^*)= \Tr(F\times_SU_i/U_i)^*\circ p_{U_i}^*
 \]
 and $p_{U_i}\circ \Tr(F\times_SU_i/U_i)$ is multiplication by $\pi_{U_i}^*\chi(F/S)$. Using the multiplication $\mu$ in $\sE$, we have the commutative diagram
 \[
 \xymatrixcolsep{60pt}
 \xymatrix{
 g_i^*\pi_B^*\sE^{a,b}(1_{U_i})\otimes \sE^{0,0}(1_{U_i})\ar[d]_\mu\ar[r]^{\id\otimes \times\pi_{U_i}^*\chi(F/S)}&g_i^*\pi_B^*\sE^{a,b}(1_{U_i})\otimes \sE^{0,0}(1_{U_i})\ar[d]^\mu\\
 g_i^*\pi_B^*\sE^{a,b}(1_{U_i})\ar[r]^{g_i^*(\Tr(E/B)^*\circ f/B^*)}&
 g_i^*\pi_B^*\sE^{a,b}(1_{U_i}) 
 }
 \]
 and so  $g_i^*( \Tr(E/B)^*\circ f/B^*)$ is an isomorphism by our assumption on $\sE$. 
\end{proof}

\begin{remark} The above result is a motivic analog of the theorem of Becker-Gottlieb \cite[Theorem 5.7]{BG}.
\end{remark}

\section{The case of split reductive groups: a conjecture}\label{sec:conj}
Let $G$ be a split reductive group-scheme over a base-scheme $S$, with Borel subgroup $B$ and maximal split torus $T\subset B$. Let $N_T\supset T$ be the normalizer of $T$ in $G$. We take the model $N_T\backslash EG$ for $BN_T$ (see the end of this section for details on our model for $EG$) and  consider the $N_T\backslash G$-bundle $BN_T\to BG$. For {\em special} $G$, that is, for $G=\GL_n, \SL_n, \Sp_{2n}$, $BN_T\to BG$ is a Nisnevich locally trivial bundle, and so we can apply the motivic Becker-Gottlieb theorem developed in the previous section; for this, we need to compute the Euler characteristic $\chi(N_T\backslash G/S)$.

Following a suggestion of Fabien Morel, who formulated the strong form, we have the following conjecture.

\begin{conjecture}\label{conj:RedConj} {\bf Strong form}: For $G$ a split reductive group-scheme over a perfect field $k$, we have $\chi((N_T\backslash G)/k)=1$ in $\GW(k)$.\\[5pt]
{\bf Weak form}: For $G$ a split reductive group-scheme over a perfect field $k$, the Euler characteristic $\chi((N_T\backslash G)/k)$ is invertible in $\GW(k)$.
\end{conjecture}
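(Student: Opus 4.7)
The strategy is to use Morel's theorem to interpret $\chi((N_T\backslash G)/k)$ as an element of $\GW(k)$ and then address the two assertions by different means. The common structural input is that $G/T\to G/N_T$ is a finite étale Galois cover with group the Weyl group $W$, while $G/T\to G/B$ is a Zariski-locally trivial affine bundle with fibres the unipotent radical $U$, so that $\chi((G/T)/k)=\chi((G/B)/k)$ is computable directly from the Bruhat decomposition. Using that $\chi(\A^m/k)=1$ and that the cells $C_w\subset G/B$ satisfy $\chi(C_w/k)=\langle\det(w)\rangle$ via their identification with the Tate object $\langle(-1)^{\ell(w)}\rangle$, one obtains
\[
\chi((G/B)/k)=\sum_{w\in W}\langle\det(w)\rangle.
\]

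For the strong form in the cases $G=\GL_2,\SL_2$, the plan is a direct geometric identification: an element of $\GL_2/N_T$ is an unordered pair of distinct lines in $k^2$, so $N_T\backslash\GL_2\cong\Sym^2(\P^1)\setminus\Delta\cong\P^2\setminus Q$, where $Q\subset\P^2$ is the smooth discriminant conic. The Borel--Moore Euler characteristic in $\SH(k)$ is additive under locally closed decompositions (via the localisation triangles $j_!j^!\to\id\to i_*i^*$ applied to $Y_\BM/k$), so combined with the standard values $\chi(\P^2/k)=\langle 1\rangle+h$ and $\chi(Q/k)=\chi(\P^1/k)=h$ this yields
\[
\chi((N_T\backslash\GL_2)/k)=(\langle 1\rangle+h)-h=\langle 1\rangle.
\]
The case of $\SL_2$ then reduces to that of $\GL_2$ because the central $\G_m\subset\GL_2$ lies in $T$, so the determinant induces an isomorphism $\GL_2/N_T\cong\SL_2/(N_T\cap\SL_2)$.

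For the weak form for $\GL_n,\SL_n$ in characteristic zero, the plan is to combine complex realisation with the Weyl cover. Choose an embedding $k\hookrightarrow\C$; complex Betti realisation sends motivic Euler characteristics to classical ones, and the rank homomorphism $\GW(k)\to\Z$ factors through this procedure. Since the real manifold $N_T(\C)\backslash G(\C)$ has topological Euler characteristic $1$, this gives $\rank(\chi((N_T\backslash G)/k))=1$. To upgrade rank $1$ to invertibility in $\GW(k)$, I would combine the Schubert-cell computation above (which for $n\geq 2$ gives $\chi((G/B)/k)=(|W|/2)\cdot h$) with an identity of the form
\[
\chi((G/T)/k)=\chi((G/N_T)/k)\cdot q_W,
\]
where $q_W$ is the trace form of the étale $W$-torsor $G/T\to G/N_T$, forcing $\chi((N_T\backslash G)/k)$ to have the form $\langle u\rangle+m\cdot h$ for some $u\in k^\times$, $m\in\Z$, which is a unit in $\GW(k)$.

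The principal obstacle is identifying and controlling the trace form $q_W$: in general it is not simply $|W|\cdot\langle 1\rangle$, but depends on the arithmetic of the étale $W$-torsor $G/T\to G/N_T$ over $k$. This is precisely where the work on trace forms mentioned in the acknowledgments (Serre's theorem on Hasse-Witt invariants of trace forms and the Bayer-Suarez results) will be needed to get a sufficiently explicit description to conclude, and it is also the reason why the full conjecture for arbitrary split reductive $G$ remains open.
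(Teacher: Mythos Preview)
Your treatment of the strong form for $\GL_2,\SL_2$ is essentially the paper's argument: identify $N_T\backslash\GL_2$ with $\P^2\setminus C$ for a smooth conic $C$ and compute by additivity. There is a small gap, however. The localisation triangle on Borel--Moore objects gives $\chi(U_{\BM})=\chi(X_{\BM})-\chi(Z_{\BM})$, and for smooth proper $X,Z$ one has $\chi(X_{\BM})=\chi(X)$, $\chi(Z_{\BM})=\chi(Z)$; but the passage from $\chi(U_{\BM})$ to the categorical Euler characteristic $\chi(U/k)$ for the non-proper $U=\P^2\setminus C$ is not automatic (for $\G_m$ they differ by a sign). The paper instead uses the homotopy purity cofiber sequence $\P^2\setminus C\hookrightarrow\P^2\to\Th(N_{C/\P^2})$ together with $\chi(\Th(N_{C/\P^2}))=\langle-1\rangle\chi(C)$, obtaining $\chi(\P^2\setminus C)=\chi(\P^2)-\langle-1\rangle\chi(\P^1)$. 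The extra $\langle-1\rangle$ is harmless here since $\langle-1\rangle h=h$, so both routes land on $\langle1\rangle$; but your write-up should make the comparison explicit. The paper also treats characteristic $2$ separately (the conic degenerates to a double line and $N_T\backslash\GL_2\cong\A^2$).

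Your approach to the weak form diverges from the paper and contains a genuine gap. The proposed identity $\chi(G/T)=\chi(G/N_T)\cdot q_W$ does not hold: motivic Euler characteristics are not multiplicative for \'etale covers in this way. Already for $\SL_2$ one has $\chi(G/T)=\chi(G/B)=h$ and $\chi(G/N_T)=1$, and no trace form of an \'etale $\Z/2$-algebra over $k$ equals $h$ in general; even interpreting $q_W$ as a class in $\GW(G/N_T)$, there is no mechanism producing the product you write. The paper takes a completely different route: it computes the \emph{signature} of $\chi((N_T\backslash\GL_n)/\Q)$ by showing, via a direct geometric analysis of $(N_T\backslash\GL_n)(\R)$ stratified by the number of real lines in an unordered $n$-tuple, that $\chi^{\mathrm{top}}((N_T\backslash\GL_n)(\R))=1$. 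Combining signature $1$ with rank $1$ (from complex realisation) and the structure of $\GW(\Q)$ --- an element of rank and signature one differs from $1$ by $4$-torsion, hence is a unit --- gives invertibility. Your proposal misses the signature input entirely; rank $1$ alone does not force the form $\langle u\rangle+m\cdot h$.

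Finally, the Serre and Bayer--Suarez results are used in the paper for an unrelated purpose (computing trace forms of real subfields of cyclotomic fields, in the course of determining Euler classes of the rank-two bundles $\tilde O(m)$ on $BN_T(\SL_2)$), and play no role in the Euler-characteristic arguments of \S\ref{sec:conj}.
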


The weak form suffices to apply the motivic Becker-Gottlieb transfer to the map $\pi_G:BN_T(G)\to BG$ in case the $N_T(G)\backslash G$-bundle is Nisnevich locally trivial; see Theorem~\ref{thm:BGSplitting} below for a precise statement in case $G=\GL_n, \SL_n$.

For $S=\Spec\C$ it is well-known that $N_T(\C)\backslash G(\C)$ has (topological) Euler characteristic 1. This is also easy to prove: $T(\C)\backslash  G(\C)$ is homotopy equivalent to $B(\C)\backslash  G(\C)$, which has homology the free abelian group on the closure of the Schubert cells $B(\C)wB(\C)$, $w\in W(G):=N_T(\C)/T(\C)$; as these are all of even dimension, we have $\chi^{top}(G(\C)/T(\C))=\#W(G)$. As the 
covering space $T(\C)\backslash  G(\C)\to N_T(\C)\backslash  G(\C)$ has degree $\#W(G)$, it follows that $\chi^{top}(N_T(\C)\backslash  G(\C))=1$. 

We note that $N_T(G)(\C)\backslash  G(\C)=(N_T(G)\backslash  G)(\C)$, since $\C$ is algebraically closed. For $\R$, this is no longer the case, which makes the analysis of the Euler characteristic $\chi^{top}((N_T(G)\backslash  G)(\R))$ a bit more difficult.

\begin{lemma}\label{lem:EulerCharRComp} For $G=\GL_n$ or $G=\SL_n$, we have $\chi^{top}((N_T(G)\backslash  G)(\R))=1$.
\end{lemma}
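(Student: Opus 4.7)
The approach is to stratify $(N_T\backslash G)(\R)$ by the isomorphism class of the maximal $\R$-torus it classifies, reduce each stratum to a homogeneous space of a compact Lie group, kill all but one stratum by a torus fixed-point argument, and compute the last stratum explicitly. The scheme $N_T\backslash G$ is the variety of maximal tori of $G$, so $(N_T\backslash G)(\R)$ is the set of maximal $\R$-tori of $G$. Since $H^1(\R, G) = 0$ by Hilbert 90 for $G = \GL_n$ or $\SL_n$, the group $G(\R)$ acts transitively on the set of maximal $\R$-tori of each isomorphism class. For $\GL_n$ these classes correspond to étale $\R$-algebras of dimension $n$, classified by pairs $(a, b)$ with $a + 2b = n$ (the algebra being $\R^a \times \C^b$); the same indexing governs $\SL_n$ via the norm-one torus of such an algebra. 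The first step is thus to write
\[
(N_T\backslash G)(\R) = \coprod_{a+2b=n} X^{a,b}, \qquad X^{a,b} := G(\R)/N^{a,b}(\R),
\]
where $N^{a,b}$ is the $\R$-group normalizer of a torus $T^{a,b}$ of type $(a,b)$.

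Next, let $K \subset G(\R)$ be the maximal compact subgroup, $O(n)$ for $\GL_n$ and $SO(n)$ for $\SL_n$, in its standard embedding, so that $T^{a,b}(\R) = (\R^*)^a \times (\C^*)^b$ acts diagonally on $\R^a \oplus \C^b \cong \R^n$. Then $K^{a,b} := K \cap N^{a,b}(\R)$ is the maximal compact subgroup of $N^{a,b}(\R)$, and polar decomposition restricts compatibly to yield a deformation retraction $X^{a,b} \simeq K/K^{a,b}$. The identity component of $K^{a,b}$ is the torus $(S^1)^b$ embedded as block rotations on the complex factors, hence of rank $b$, whereas $K^0$ has rank $\lfloor n/2 \rfloor$. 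Applying the fixed-point identity $\chi(Y) = \chi(Y^{T_0})$ to the right action of a maximal torus $T_0 \subset K$ on $Y = K/K^{a,b}$, a fixed point corresponds to a conjugation $g^{-1} T_0 g \subset (K^{a,b})^0$, which is impossible on rank grounds unless $b = \lfloor n/2 \rfloor$. Thus $\chi(X^{a,b}) = 0$ for every type except $(a_0, b_0) := (n \bmod 2, \lfloor n/2 \rfloor)$.

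For this surviving stratum, $K/K^{a_0, b_0}$ is the space of unordered orthogonal decompositions $\R^n = L_1 \oplus \cdots \oplus L_{a_0} \oplus P_1 \oplus \cdots \oplus P_{b_0}$ into lines and $2$-planes. Its ordered cover $F_n$ fibers by extracting one $2$-plane summand at a time, giving a tower whose fibers are real Grassmannians $\Gr(2, n-2i)(\R)$, with an initial factor $\P^{n-1}(\R)$ when $n$ is odd. Combining $\chi(\Gr(2, 2m)(\R)) = m$ and $\chi(\P^{2m}(\R)) = 1$ with multiplicativity of the Euler characteristic on fiber bundles gives $\chi(F_n) = b_0!$ (since $a_0 \in \{0, 1\}$), and dividing by the free action of $S_{b_0}$ permuting the $2$-planes yields $\chi(X^{a_0, b_0}) = 1$. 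Summing contributions, $\chi^{top}((N_T\backslash G)(\R)) = 1$.

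The main obstacle is the rank-vanishing step: while intuitively clear, making it rigorous requires verifying that the right action of $T_0$ on $K/K^{a,b}$ is a smooth action of a compact torus on a compact manifold satisfying the hypotheses of the fixed-point formula, and confirming that no element of $K$ can conjugate a torus of rank $\lfloor n/2 \rfloor$ into one of smaller rank. Both reduce to standard facts about compact Lie groups, but require careful bookkeeping with the (generally disconnected) groups $K$ and $K^{a,b}$ and the Weyl piece $W^{a,b} = S_a \times (\Z/2 \wr S_b)$ realized as signed block permutations of $\R^a \oplus \C^b$.
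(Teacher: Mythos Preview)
Your proof is correct and follows the same broad outline as the paper's: stratify $(N_T\backslash G)(\R)$ by the type of the real torus (equivalently, the number $a$ of real lines versus $b$ pairs of complex conjugate lines in the paper's $\Sym^n\P^{n-1}$ model), show all strata with $a\ge 2$ contribute zero, and compute the surviving stratum via a tower of real Grassmannians and the identity $\chi(\Gr(2,2m)(\R))=m$.

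The genuine difference lies in the vanishing step. The paper argues concretely: on a finite cover of $U_i$ obtained by ordering and orienting the $i$ real lines, one gets an $SO(i)$-bundle, and $\chi(SO(i))=0$ for $i\ge 2$ kills the stratum. You instead pass to the maximal compact $K$ and use the left action of a maximal torus $T_0\subset K$ on $K/K^{a,b}$, together with $\chi(Y)=\chi(Y^{T_0})$ and a rank comparison between $T_0$ and $(K^{a,b})^0=(S^1)^b$, to see there are no fixed points when $b<\lfloor n/2\rfloor$. (Note: the action should be on the left, not the right, to get the fixed-point condition $g^{-1}T_0g\subset K^{a,b}$ you wrote; this is only a slip of terminology.) Your argument is more Lie-theoretic and, since it rests only on $H^1(\R,G)=0$, passage to the compact form, and rank bookkeeping, it is better positioned to generalize to other split reductive $G$. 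The paper's argument is more elementary and self-contained. For the final stratum both proofs reduce to the same Grassmannian tower; the paper additionally proves $\chi(\Gr(2,2m)(\R))=m$ from the cellular structure, which you quote as known.
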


\begin{proof} It suffices to   consider the case $G=\GL_n$, since   the schemes $N_T(\GL_n)\backslash \GL_n$ and $N_T(\SL_n)\backslash \SL_n$ are isomorphic. We write $N_T$ for $N_T(\GL_n)$.

$N_T=\G_m^n\rtimes \mathfrak{S}_n$, where $\mathfrak{S}_n$ is the symmetric group,  so we have
\[
N_T\backslash \GL_n=\mathfrak{S}_n\backslash(\G_m^n\backslash \GL_n)
\]
and $\G_m^n\backslash \GL_n$ is the open subscheme of $(\P^{n-1})^n$ parametrizing $n$-tuples of lines through 0 in $\A^n$ which span $\A^n$. This realizes 
$N_T\backslash \GL_n$ as an open subscheme of $\Sym^n\P^{n-1}$, and gives a decomposition of the $\R$-points $x\in (N_T\backslash \GL_n)(\R)$ in terms of the residue fields of the corresponding closed points $y_1,\ldots, y_s$ of $(\P^{n-1})^n$ lying over $x$. This decomposes $(N_T\backslash \GL_n)(\R)$ as a disjoint union of open submanifolds
\[
(N_T\backslash \GL_n)(\R)=\amalg_{i=0}^nU_i
\]
where $U_i$ parametrizes the collection of lines with exactly $i$ real lines. Of course $U_i$ is empty if $n-i$ is odd. 

We claim that $\chi^{top}(U_i)=0$ if $i\ge2$. Indeed, we can form a finite covering space $\tilde{U}_i\to U_i$ by killing the permutation action on the real lines and choosing an orientation on each real line. Then putting a metric on $\C^n$, we see that $\tilde{U}_i$ is homotopy equivalent to an $SO(i)$-bundle over 
the space parametrizing the (unordered) collection of the remaining lines. Since $\chi^{top}(SO(i))=0$ for $i\ge2$, we have $\chi^{top}(\tilde{U}_i)=0$ and hence $\chi^{top}(U_i)=0$ as well.

Now assume $n=2m$ is even. $U_0$ parametrizes the unordered collections of $m$ pairs of complex conjugate lines that span $\C^n$, so we may form the degree $m!$ covering space $\tilde{U}_0\to U_0$ of ordered $m$-tuples of such conjugate pairs. Each pair $\ell, \bar{\ell}$ spans a $\A^2\subset \A^n$ defined over $\R$, so we may map $\tilde{U}_0$ to the real points of the  quotient $(\GL_2)^m\backslash \GL_n$, which is homotopy equivalent to the real points of the partial flag manifold $Fl_m:=P_m\backslash \GL_n$, where $P_m$ is the parabolic with Levi subgroup 
$(\GL_2(\C))^m$. In fact, the map $\tilde{U}_0\to Fl_m(\R)$ is also a homotopy equivalence: given a  two-plane $\pi\subset \A^n$ defined over $\R$, the pairs of conjugate lines in $\pi$ are parametrized by the upper half-plane in the Riemann sphere $\P(\pi)$, which is contractible.

We claim that $\chi^{top}(Fl_m(\R))=m!$ for all $m\ge2$. To compute the Euler characteristic, we use the following fact:\\[10pt]
If $X\in \Sm/\R$ is {\em cellular}, that is, $X$ admits a finite stratification by locally closed subschemes $X_i\cong \A^{n_i}$, then $\chi^{top}(X(\R))=\rnk\,\CH^{even}(X)-
\rnk\,\CH^{odd}(X)$.
\\[10pt]
This follows from \cite[Remark 1.11, Proposition 1.14]{LevEnum}, using the $\GW(\R)$-valued Euler characteristic $\chi(X/\R)$:
\begin{equation}\label{eqn:EulerCharCell}
\vbox{
\ \\
\noindent
i. For $X\in \Sm/\R$,  $\chi^{top}(X(\R))$ is equal to the signature of $\chi(X/\R)$  \cite[Remark 1.11]{LevEnum}. \\[1pt]
ii. For $X\in \Sm/k$ cellular, $\chi(X/k)=\rnk\,\CH^{even}\cdot\<1\>+\rnk\,\CH^{odd}\cdot\<-1\>$ \cite[Proposition 1.14]{LevEnum}.
}
\end{equation}
 Alternatively, one can apply the Lefschetz trace formula to complex conjugation  $c$ acting on $H^*(X(\C), \Q)$, and use the fact that for $X$ cellular, $H^{odd}(X(\C),\Q)=0$, $H^{2n}(X(\C),\Q)$ has dimension $\rnk\,\CH^n(X)$ and $c$ acts by $(-1)^n$, since the cycle class map $\cl^n$ is a $c$-equivariant isomorphism from the $c$-invariant $\Q$ vector space $\CH^n(X)\otimes\Q$ to $H^{2n}(X(\C),\Q(2\pi i)^n)=\Q\cdot \cl^n(\CH^n(X))$.

For $m=2$, $Fl_2=\Gr(2,4)$, which has 4 Schubert cells of even dimension and 2 of odd dimension. Thus $\chi(Fl_2/\R)=4\<1\>+2\<-1\>$, which has signature 2, and hence $\chi^{top}(Fl_2(\R))=2$. 

In general, we have the fibration $Fl_m\to Fl_{m-1}$ with fiber $\Gr(2, 2m)$. We can compare the Schubert cells for $\Gr(2, 2m)$ with those for $\Gr(2,2m-2)$: the ``new'' ones come from the partitions $(2m-3,i)$, $i=0, \ldots, 2m-3$ and $(2m-2, i)$, $i=0,\ldots, 2m-2$. This  adds the quadratic form $(2m-1)\<1\>+ (2m-2)\<-1\>$ of signature +1 to  $\chi(\Gr(2,2m-2))$, which implies that $\chi^{top}(\Gr(2,2m)(\R))=\chi^{top}(\Gr(2, 2m-2))+1$. By induction, this says that $\chi^{top}(\Gr(2,2m)(\R))=m$, and similarly by induction we find $\chi^{top}(Fl_m)=m\cdot \chi^{top}(Fl_{m-1})=m!$. Thus $\chi^{top}(\tilde{U}_0)=m!$ and hence $\chi^{top}(U_0)=1$.

In case $n=2m+1$ is odd, we can again choose a metric and fiber $U_1$ over the real projective space $\R\P^{2m}$, with fiber over $[\ell]\in \R\P^{2m}$ homotopy equivalent to the $U_0$ for the hyperplane perpendicular to the $\C$-span of $\ell$. As $\chi^{top}(\R\P^{2m})=1$, this yields $\chi^{top}(U_1)=1$. 
\end{proof}

\begin{proposition} Let $n\ge1$ be an integer. There is a 4-torsion element $\tau_n\in \GW(\Q)$ with  $\chi((N_T\backslash\GL_n)/\Q)=1+\tau_n$.
Moreover, the Euler characteristic $\chi((N_T\backslash\GL_n)/\Q)$ is invertible in $\GW(\Q)$.
\end{proposition}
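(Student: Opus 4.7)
The plan is to compute the rank and signature at $\R$ of $\chi((N_T\backslash\GL_n)/\Q)\in\GW(\Q)$, show both equal $1$, and from this deduce both the $4$-torsion assertion and invertibility. First, by base-change compatibility of the motivic Euler characteristic, the image of $\chi((N_T\backslash\GL_n)/\Q)$ in $\GW(\C)=\Z$ (i.e.\ its rank) equals $\chi((N_T\backslash\GL_n)_{\C}/\C)$, which in turn equals the topological Euler characteristic $\chi^{top}(N_T(\C)\backslash\GL_n(\C))=1$ recalled in the discussion preceding Lemma~\ref{lem:EulerCharRComp}. Second, by item (i) of~\eqref{eqn:EulerCharCell}, the signature of $\chi((N_T\backslash\GL_n)/\Q)$ at the unique real place equals $\chi^{top}((N_T\backslash\GL_n)(\R))=1$ by Lemma~\ref{lem:EulerCharRComp}. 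Setting $\tau_n:=\chi((N_T\backslash\GL_n)/\Q)-1$ therefore yields an element of $\GW(\Q)$ of rank $0$ and signature $0$.

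To show $4\tau_n=0$, observe that rank zero puts (the image of) $\tau_n$ in the fundamental ideal $I(\Q)\subset W(\Q)$. Since $2=\<1\>+\<1\>=\<\<-1\>\>$ is a $1$-fold Pfister form, we have $2\in I(\Q)$, so $2\tau_n\in I^2(\Q)$. For each prime $p$, $\Q_p$ has cohomological $2$-dimension $\le 2$, hence $I^3(\Q_p)=0$; and by the Milnor relation $I^2(\Q_p)/I^3(\Q_p)\cong K_2^M(\Q_p)/2\cong\Z/2$, the group $I^2(\Q_p)$ itself is $2$-torsion. Thus $4\tau_n=0$ in every $W(\Q_p)$, and already $\tau_n=0$ in $W(\R)$. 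By Hasse--Minkowski, $W(\Q)\hookrightarrow W(\R)\oplus \bigoplus_p W(\Q_p)$, so $4\tau_n=0$ in $W(\Q)$. Lifting through the exact sequence $0\to\Z\cdot h\to\GW(\Q)\to W(\Q)\to 0$ gives $4\tau_n=mh$ in $\GW(\Q)$ for some $m\in\Z$; taking rank forces $m=0$, so $4\tau_n=0$ in $\GW(\Q)$ as required.

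For invertibility, I would use Knebusch's description of $\Spec\GW(\Q)$: the maximal ideals are $(\ker\rank,\ell)$ and $(\ker\text{sgn}_\R,\ell)$ for primes $\ell$ (coinciding when $\ell=2$, since $\rank\equiv\text{sgn}_\R\pmod 2$ on one-dimensional forms), each with residue field $\F_\ell$. The image of $\chi((N_T\backslash\GL_n)/\Q)$ in every such residue field is $1\in\F_\ell$, which is nonzero, so $\chi((N_T\backslash\GL_n)/\Q)$ lies in no maximal ideal of $\GW(\Q)$ and is therefore a unit.

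The main obstacle is the $4$-torsion bound: it combines the Milnor-conjecture description of $I^n(\Q_p)/I^{n+1}(\Q_p)$ with the cohomological vanishing $I^3(\Q_p)=0$ and Hasse--Minkowski injectivity, together with the observation that multiplication by $2$ raises the $I$-adic filtration by one. None of these ingredients is individually difficult, but weaving them together is exactly the content of the torsion statement. The invertibility is then essentially formal once the prime spectrum of $\GW(\Q)$ is in hand.
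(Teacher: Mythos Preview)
Your argument is correct, but it takes a somewhat different route from the paper's. Both begin identically by computing rank and signature via base change and Lemma~\ref{lem:EulerCharRComp}, giving $\tau_n$ of rank and signature zero. For the $4$-torsion, the paper invokes the classical structure theorem $W(\Q)\cong W(\R)\oplus\bigoplus_p W(\F_p)$ (Scharlau): an element of rank and signature zero lands in $\bigoplus_p W(\F_p)$, and each $W(\F_p)$ is $\Z/2$, $\Z/2\times\Z/2$, or $\Z/4$, hence $4$-torsion. You instead localize at the completions $\Q_p$, use the Milnor-conjecture identification $I^2(\Q_p)\cong\Z/2$ together with $I^3(\Q_p)=0$, and then appeal to Hasse--Minkowski injectivity. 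Both are valid; the paper's path is shorter and uses only the elementary computation of Witt rings of finite fields, whereas yours brings in somewhat heavier input (cohomological dimension, Milnor $K$-theory mod $2$, the local--global principle).

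For invertibility, the paper simply notes that the kernel $I$ of $\GW(\Q)\to\GW(\R)$ satisfies $I^3=0$, so $\tau_n$ is nilpotent and $1+\tau_n$ is automatically a unit. Your argument via the maximal spectrum of $\GW(\Q)$ is correct and conceptually clean, but again more elaborate than needed: once you know $\tau_n$ is nilpotent (which already follows from your $4$-torsion statement, since $\tau_n^4=0$ would suffice, or more directly from the paper's $I^3=0$), invertibility of $1+\tau_n$ is immediate without any analysis of $\Spec\GW(\Q)$.
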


\begin{proof} From Lemma~\ref{lem:EulerCharRComp} and \eqref{eqn:EulerCharCell}(i), $\chi((N_T\backslash\GL_n)/\Q)$ has signature 1. Moreover,  the rank of $\chi((N_T\backslash\GL_n)/\Q)$ is $\chi^{top}((N_T\backslash\GL_n)(\C))=1$ and thus $\chi((N_T\backslash\GL_n)/\Q)-1\in \GW(\Q)$ has rank and signature zero. The Witt ring of $\Q$ is isomorphic to the direct sum $W(\R)\oplus\oplus_p W(\F_p)$ \cite[Chap. 5, Theorem 1.5 and Theorem 3.4]{Scharlau} and thus an element of $\GW(\Q)$ with rank and signature zero lives in the direct sum of the  Witt rings $W(\F_p)$, which are either $\Z/2$ (for $p=2$  \cite[Chap. 5, Theorem 1.6]{Scharlau}),   $\Z/2\times \Z/2$ or $\Z/4$ \cite[Chap. 2, Corollary 3.11]{Scharlau}. As the kernel $I$ of $\GW(\Q)\to \GW(\R)$ satisfies $I^3=0$ \cite[Chap. 5, Theorem 1.5, Theorem 3.4 and Theorem 6.6]{Scharlau}, an element of $\GW(\Q)$ with rank and signature one is invertible.
\end{proof}

\begin{corollary} The weak form of Conjecture~\ref{conj:RedConj} holds for $G=\GL_n$ and $G=\SL_n$ over a field $k$ of characteristic zero.
\end{corollary}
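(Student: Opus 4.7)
The plan is to deduce the general characteristic zero case from the preceding proposition by a base-change argument. For any characteristic zero field $k$, fix an embedding $\Q \hookrightarrow k$ and let $p:\Spec k \to \Spec \Q$ be the associated morphism. Since $\GL_n$, $\SL_n$ and their respective torus normalizers $N_T$ are all defined over $\Z$, the quotient $N_T\backslash G$ over $k$ is obtained by pulling back the corresponding scheme over $\Q$ along $p$.

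Next I would invoke the fact that $p^*:\SH(\Q)\to \SH(k)$ is a symmetric monoidal functor. By Proposition~\ref{prop:DualProp}(4) it preserves strongly dualizable objects together with their full duality data (the maps $\delta$ and $\ev$), and consequently $p^*$ carries the Euler characteristic $\chi((N_T\backslash G)/\Q) \in \End_{\SH(\Q)}(1_\Q)$ to $\chi((N_T\backslash G)_k/k) \in \End_{\SH(k)}(1_k)$. Under Morel's identification of $\End_{\SH(F)}(1_F)$ with $\GW(F)$ for $F$ a perfect field (both $\Q$ and $k$ are perfect in characteristic zero), the induced ring homomorphism $\End_{\SH(\Q)}(1_\Q)\to \End_{\SH(k)}(1_k)$ corresponds to the standard base-change map $\GW(\Q)\to \GW(k)$.

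With this in hand the conclusion is immediate: the preceding proposition gives that $\chi((N_T\backslash \GL_n)/\Q)=1+\tau_n$ is invertible in $\GW(\Q)$, and a ring homomorphism carries units to units, so the image $\chi((N_T\backslash \GL_n)_k/k)$ is invertible in $\GW(k)$. For $G=\SL_n$ one uses the isomorphism $N_T(\SL_n)\backslash \SL_n \cong N_T(\GL_n)\backslash \GL_n$ noted in the proof of Lemma~\ref{lem:EulerCharRComp}, which reduces this case to the $\GL_n$ case already handled.

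The only point requiring real verification is the base-change compatibility of the Euler characteristic, and this is essentially formal: because $p^*$ is symmetric monoidal, it intertwines $\delta_{(N_T\backslash G)/\Q}$, $\tau$ and $\ev_{(N_T\backslash G)/\Q}$ with their counterparts over $k$, so the composite defining $\chi$ is preserved. Everything else is a direct appeal to the preceding proposition and lemma.
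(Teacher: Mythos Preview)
Your proposal is correct and matches the paper's intended argument: the corollary is stated without proof, the implicit reasoning being precisely the base-change argument you give, pulling back along $p:\Spec k\to\Spec\Q$, using that $p^*$ is symmetric monoidal (hence preserves Euler characteristics), and invoking the preceding proposition together with the isomorphism $N_T(\SL_n)\backslash\SL_n\cong N_T(\GL_n)\backslash\GL_n$.
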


We take the opportunity here to make explicit our model for $BG$, $G$ a reductive group-scheme over a perfect field $k$. We follow Morel-Voevodsky \cite[\S 4.2.2]{MorelVoev}, by using the Totaro-Graham-Edidin model for the Morel-Voevodsky $B_\text{\'et} G$ as our $BG$. 

Fixing integers $n>0$, $m\ge0$, let $M_{n\times m+n}\cong \A^{n(m+n)}$ be the affine space of $m\times(m+n)$ matrices and  let $E_m\GL_n\subset M_{n\times m+n}$ be the open subscheme  of the  matrices of maximal rank $n$. We give $E_m\GL_n$ the base-point $x_0=(I_n, 0_n,\ldots, 0_n)$ where $I_n\in M_{n\times n}$ is the identity matrix and $0_n\in M_{n\times 1}$ is the 0-matrix. We let  $E\GL_n$ be the presheaf on $\Sm/k$
\[
E\GL_n=\colim_mE_m\GL_n
\]
which we may consider as a pointed presheaf with base-point $x_0$ if we wish. 
We note that $\GL_n$ acts freely on $E_m\GL_n$ for each $m\ge0$. Fixing an embedding $G\subset \GL_n$ for some $n$, we have the (pointed) quotient scheme 
\[
B_mG:=G\backslash E_m\GL_n
\]
and the (pointed) presheaf on $\Sm/k$
\[
BG:=\colim_mB_mG
\]

For $X\in \Sm/k$, we have
\[
BG(X)=\colim_mB_mG(X)
\]
where $B_mG(X)$ denotes the morphisms from $X$ to $B_mG$ in $\Sm_k$ and $BG(X)$ is the set of morphisms from the representable presheaf $X$ to $BG$.Thus we can speak of the $k$-points of $BG$. 

Although $B_mG$ is almost never the presheaf quotient of $E_m\GL_n$ by $G$, $B_mG$ is the quotient $G\backslash E_m\GL_n$ as \'etale sheaf, and if $G$ is  special, $B_mG$ is the quotient $G\backslash E_m\GL_n$ as Zariski sheaf. These facts pass to the morphism $E\GL_n\to BG$.  Although $BG$ is not in general the presheaf quotient, we nonetheless will often write $BG=G\backslash E\GL_n$. We note that the $\A^1$-homotopy type of $BG$ is independent of the choice of embedding $G\subset \GL_n$ and does in fact give a model for the Morel-Voevodsky $B_\text{\'et}G$ in the unstable motivic homotopy category \cite[Remark 4.27]{MorelVoev}.  

For each $m\ge0$, we have the principal $G$-bundle
\[
G\to E_m\GL_n\to B_mG
\]
which is \'etale locally trivial if $G$ is smooth over $k$ and is  Zariski locally trivial if $G$ is special. We refer to the sequence
\[
G\to E\GL_n\to BG
\]
as a principal $G$-bundle. 

For any $G$ defined as a closed subgroup-scheme of $\GL_n$ or of $\SL_n$, we will always use the embedding $G\subset \GL_n$ in our definition of $B_mG$ and $BG$, unless we explicitly say otherwise.

For $G=\GL_n$, we recall that $B_m\GL_n$ is just the Grassmann scheme $\Gr(n, n+m)$. For $G=N_T:=N_T(\GL_n)\subset \GL_n$ we have   the fiber bundle
\[
B_mN_T\to B_m\GL_n.
\]
Since $\GL_n$ is special the bundle $E_m\GL_n\to B_m\GL_n$ is a Zariski locally trivial principal $\GL_n$-bundle; this gives the description of $B_mN_T\to B_m\GL_n$ as
\[
B_mN =(N_T\backslash \GL_n)\times^{\GL_n}E_m\GL_n
\]
that is $\pi_{\GL_n,m}:B_mN_T\to B_m\GL_n$ is a Zariski locally trivial fiber bundle with fiber $F=N_T\backslash \GL_n$. 

Applying this construction to $N_T(\SL_n)\subset\SL_n\subset\GL_n$, gives us the Zariski locally trivial fiber bundle
\[
\pi_{\SL_n,m}: B_mN_T(SL_n)\to B_m\SL_n
\]
with fiber $N_T\backslash \GL_n$, noting that the inclusion $\SL_n\to\GL_n$ induces an isomorphism of schemes
\[
N_T(\SL_n)\backslash\SL_n\to N_T(\GL_n)\backslash\GL_n.
\]

\begin{theorem}\label{thm:BGSplitting} Let $k$ be a field of characteristic zero and let $\sE\in\SH(k)$ be a motivic ring  spectrum. Then the maps $\pi_{\GL_n}:B_mN_T(\GL_n)\to B_m\GL_n$ and $\pi_{\SL_n}:B_mN_T(\SL_n)\to B_m\SL_n$ induce split injections
\[
\pi_{\GL_n}^*:\sE^{*,*}(B_m\GL_n)\to \sE^{*,*}(B_mN_T(\GL_n))
\]
\[
\pi_{\SL_n}^*:\sE^{*,*}(B_m\SL_n)\to \sE^{*,*}(B_mN_T(\GL_n),
\]
for each $m\ge0$, and injections
\[
\pi_{\GL_n}^*:\sE^{*,*}(\BGL_n)\to \sE^{*,*}(BN_T(\GL_n)),
\]
\[
\pi_{\SL_n}^*:\sE^{*,*}(\BSL_n)\to \sE^{*,*}(BN_T(\SL_n)).
\]
\end{theorem}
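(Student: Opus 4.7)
The plan is to apply Theorem~\ref{thm:motBG} level-by-level to the Zariski (hence Nisnevich) locally trivial fiber bundles $\pi_{G,m}: B_mN_T(G) \to B_mG$, which share the common fiber $F = N_T(\GL_n)\backslash \GL_n$ (noting $N_T(\SL_n)\backslash \SL_n \cong N_T(\GL_n)\backslash \GL_n$ via $\SL_n \hookrightarrow \GL_n$), and then to descend to the colimit using naturality of the transfer.

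The two hypotheses of Theorem~\ref{thm:motBG}(2) are verified as follows. For the Euler characteristic condition, the corollary preceding the theorem shows $\chi(F/k)\in \GW(k)=\End_{\SH(k)}(1_k)$ is a unit; since $\sE^{*,*}(k)$ is a $\GW(k)$-module through the unit map $1_k\to\sE$, multiplication by $\chi(F/k)$ is an automorphism of $\sE^{*,*}(k)$. For strong dualizability of $F/k\in\SH(k)$, I would use that $\Char k=0$ to invoke Hironaka and obtain a smooth projective compactification $\bar F\supset F$ with simple normal crossings boundary $D=\bar F\setminus F$. Proposition~\ref{prop:DualProp}(1) supplies strong dualizability of $\bar F/k$ and of each smooth projective stratum of $D$, and an induction on the number of irreducible components of $D$, using the localization distinguished triangles together with the 2-out-of-3 property from Proposition~\ref{prop:DualProp}(5), propagates strong dualizability to $F/k$. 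Theorem~\ref{thm:motBG}(2) then directly yields the split injections $\pi_{G,m}^*$ for each $m$.

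For the statements about $BG=\colim_m B_mG$, the key is that the splittings constructed above are compatible with the transition maps of the towers. The inclusions $B_mG \hookrightarrow B_{m+1}G$ are open immersions and, since $E_m\GL_n$ is the preimage of $B_mG$ inside $E_{m+1}\GL_n$, the squares
\[
\xymatrix{
B_mN_T(G) \ar@{^{(}->}[r] \ar[d]_{\pi_{G,m}} & B_{m+1}N_T(G) \ar[d]^{\pi_{G,m+1}} \\
B_mG \ar@{^{(}->}[r]_{j_m} & B_{m+1}G
}
\]
are Cartesian. The base-change lemma after Definition~\ref{def:transfer} then gives $\Tr(\pi_{G,m}/B_mG)=j_m^*\Tr(\pi_{G,m+1}/B_{m+1}G)$, which translates on cohomology into compatibility of the transfer homomorphisms with restriction along $j_m$. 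The splittings $r_m = \chi(F/k)^{-1}\cdot \Tr(\pi_{G,m}/k)^*$ therefore assemble into a morphism of towers, producing a left inverse to $\pi_G^*$ on the inverse limit; combined with the Milnor $\lim^1$ exact sequence for $\sE^{*,*}(BG)$, a diagram chase yields injectivity of $\pi_G^*$.

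The main obstacle is the colimit step: one must carefully confirm that the per-level splittings genuinely fit together into a section on the inverse limit and then handle the $\lim^1$-term so as to extract honest injectivity of $\pi_G^*:\sE^{*,*}(BG)\to \sE^{*,*}(BN_T(G))$. Once the base-change naturality of the Becker--Gottlieb transfer is in hand, however, this is a formal verification; the substantive input is Theorem~\ref{thm:motBG} together with the invertibility of $\chi(F/k)\in\GW(k)$ proved in the preceding corollary.
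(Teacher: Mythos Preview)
Your proposal is correct and follows essentially the same route as the paper: apply Theorem~\ref{thm:motBG} at each finite level using the invertibility of $\chi(F/k)$ in $\GW(k)$ and the strong dualizability of $F/k$ (which the paper states as a known fact in characteristic zero, while you spell out the standard Hironaka/2-out-of-3 argument), then use naturality of the transfer in $m$ together with the Milnor sequence to deduce injectivity on the colimit. The paper phrases the last step as obtaining split injections on both $R^0\lim$ and $R^1\lim$ and reading off injectivity from the two Milnor sequences, which is exactly the diagram chase you allude to.
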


\begin{proof} We give the proof for $\GL_n$, the proof for $\SL_n$ is the same; we write $N_T$ for $N_T(\GL_n)$.

Since $k$ has characteristic zero and $N_T\backslash \GL_n$ is smooth over $k$, $N_T\backslash \GL_n$ is strongly dualizable. By our computations above verifying the weak form of Conjecture~\ref{conj:RedConj}, the Euler characteristic $\chi(N_T\backslash \GL_n/k)$ is invertible in $\GW(k)$, thus by Theorem~\ref{thm:motBG}, the map
\[
\pi_m^*:\sE^{*,*}(B_m\GL_n)\to \sE^{*,*}(B_mN_T)
\]
is split injective, with splitting natural in $m$. 

This gives split injections
\[
\pi^*:R^i\lim_m\sE^{*,*}(B_m\GL_n)\to R^i\lim_m\sE^{*,*}(B_mN_T)
\]
for $i=0,1$, and by looking at the Milnor sequences
\[
\xymatrixcolsep{15pt}
\xymatrix{
0\ar[r]&R^1\lim_m\sE^{*-1,*}(B_m\GL_n)\ar[r]\ar[d]^{\pi^*}& \sE^{*,*}(\BGL_n)\ar[r]
\ar[d]^{\pi^*}& 
\lim_m\sE^{*,*}(B_m\GL_n)\ar[r]\ar[d]^{\pi^*}&0\\
0\ar[r]&R^1\lim_m\sE^{*-1,*}(B_mN_T)\ar[r]& \sE^{*,*}(BN_T)\ar[r]
& 
\lim_m\sE^{*,*}(B_mN_T)\ar[r]&0
}
\]
we see that $\pi_{\GL_n}^*:\sE^{*,*}(\BGL_n)\to \sE^{*,*}(BN_{T_{\GL_n}})$ is injective.
\end{proof}

We conclude this section with a proof of the strong form of the conjecture for $\GL_2$  (or $\SL_2$). We argue as above: $(\G_m)^2\backslash \GL_2=\P^1\times\P^1\setminus\Delta$, where $\Delta\cong \P^1$ is the diagonal, and thus $N_T\backslash \GL_2=\mathfrak{S}_2\backslash (\P^1\times\P^1\setminus\Delta)$. We have $\mathfrak{S}_2\backslash \P^1\times\P^1=\Sym^2\P^1\cong \P^2$. Via this isomorphism the quotient map
\[
\pi:\P^1\times\P^1\to \P^2=\Sym^2\P^1
\]
is given by symmetric functions,
\[
\pi((x_0: x_1),(y_0:y_1))=(x_0y_0:x_0y_1+x_1y_0:x_1y_1)
\]
Thus the restriction of $p$ to $\Delta$ is the map $\bar{\pi}(x_0:x_1)=(x_0^2: 2x_0x_1: x_1^2)$ with image the conic $C\subset \P^2$ defined by $Q:=T_1^2-4T_0T_2$. Furthermore, this identifies the double cover $\P^1\times\P^1\to \P^2$ with $\Spec\sO_{\P^2}(\sqrt{Q})$, that is, the closed subscheme of $O_{\P^2}(1)$ defined by pulling back the section of $O_{\P^2}(2)$ defined by $Q$ via the squaring map $O_{\P^2}(1)\to O_{\P^2}(2)$.

For $k$ of characteristic $\neq2$, the map $\bar{\pi}:\P^1\to C$ is an isomorphism. If $\Char k=2$, then $C$ is just the line $T_1=0$ and $N_T\backslash \GL_2\cong \A^2$. 

We have the $\A^1$- cofiber sequence
$\P^2\setminus C\hookrightarrow \P^2\to \Th(N_{C/\P^2})$.
The identity $\chi(Th(N_{C/\P^2})/k)=\<-1\>\chi(C/k)$ \cite[Proposition1.10(2)]{LevEnum} together with the  additivity of $\chi(-/k)$ in distinguished triangles gives
\[
\chi(N_T\backslash \GL_2/k)=\chi(\P^2\setminus C/k)=\chi(\P^2/k)-\<-1\>\chi(\P^1/k)
\]
Applying \eqref{eqn:EulerCharCell}(ii), this gives 
\[
\chi((N_T\backslash \GL_2)/k)=2\<1\>+\<-1\>-\<-1\>\cdot(\<1\>+\<-1\>)=\<1\>,
\]
verifying the strong form of the conjecture in this case.

\begin{proposition} For $k$ a perfect field, $\chi((N_T\backslash \GL_2)/k)=1$.
\end{proposition}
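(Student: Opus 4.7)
The plan is to observe that the discussion immediately preceding the proposition already does nearly all the work, splitting into two cases according to whether $\Char k = 2$, so the proof amounts to completing each case and verifying they give the same answer $1 \in \GW(k)$.

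First I would handle the case $\Char k \neq 2$. Here the quotient map $\pi:\P^1\times\P^1\to\P^2$ presents $N_T\backslash\GL_2$ as the open complement $\P^2 \setminus C$ of a smooth conic $C$ which is identified with $\P^1$ via $\bar\pi$. The displayed calculation just above the proposition, using the localization (cofiber) sequence $\P^2\setminus C \hookrightarrow \P^2 \to \Th(N_{C/\P^2})$, additivity of $\chi(-/k)$ in distinguished triangles, the identity $\chi(\Th(N_{C/\P^2})/k) = \langle -1\rangle\chi(C/k)$ from \cite[Proposition 1.10(2)]{LevEnum}, and the cellular formula \eqref{eqn:EulerCharCell}(ii) for $\P^2$ and $\P^1$, already gives $\chi((N_T\backslash\GL_2)/k) = 2\langle 1\rangle + \langle -1\rangle - \langle -1\rangle(\langle 1\rangle+\langle -1\rangle) = \langle 1\rangle = 1$. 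So this case is complete.

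Second, when $\Char k = 2$, the conic $C$ degenerates to the double line $T_1 = 0$ and the paragraph above the proposition observes that in this characteristic one obtains an isomorphism $N_T\backslash\GL_2 \cong \A^2$. Since $\A^2$ has a cell structure with a single even-dimensional cell (and no odd cells), the cellular Euler characteristic formula \eqref{eqn:EulerCharCell}(ii) gives $\chi(\A^2/k) = \langle 1\rangle = 1$ in $\GW(k)$. Equivalently, $\A^2$ is $\A^1$-contractible, so $\chi(\A^2/k) = \chi(\Spec k/k) = 1$.

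Combining the two cases gives $\chi((N_T\backslash\GL_2)/k) = 1$ for every perfect field $k$. There is no real obstacle here, as the two characteristic cases have already been set up; the only thing to be careful about is that in characteristic $2$ the geometric picture degenerates, but the resulting scheme $\A^2$ is even easier to handle, and both computations agree with the strong form of Conjecture \ref{conj:RedConj}.
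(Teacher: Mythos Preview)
Your proposal is correct and follows exactly the paper's approach: the proposition is a summary of the computation carried out in the preceding paragraphs, and you have correctly identified the two cases (char $\neq 2$ via the cofiber sequence for $\P^2\setminus C$, char $=2$ via the isomorphism with $\A^2$) and completed the easy char $2$ case that the paper leaves implicit.
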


\section{Some background on Euler and Pontryagin classes}\label{sec:background} We now turn to a more detailed study of Euler classes and Pontryagin classes. In this section we recall some necessary background on characteristic classes, which we will use throughout the rest of the paper.  This material is all taken from \cite{Anan}.

We work over a fixed perfect base-field $k$, giving us the motivic stable homotopy category $\SH(k)$ as the setting for our constructions, 

We have the algebraic Hopf map $\eta_2:\A^2\setminus\{0\}\to \P^1$, giving the stable Hopf map $\eta:\Sigma_{\G_m}\mS_k\to \mS_k$, that is $\eta\in \pi_{1,1}(\mS_k)(k)$. 
For $\sE$ a motivic ring spectrum, the unit map gives us the element $\eta_\sE\in \sE^{-1,-1}(k)$.  $\sE$ is said to be an {\em $\eta$-invertible theory}  if multiplication by $\eta_\sE$ is invertible on $\sE^{**}(k)$, that is, there is an element $\eta_\sE^{-1}\in \sE^{1,1}(k)$ with $\eta_\sE\cdot \eta_\sE^{-1}=1_\sE$. This implies that multiplication by $\eta_\sE$ is an isomorphism on $\sE^{**}(F)$ for all $F\in \SH(k)$. 

We take the definition of {\em an $\SL$-orientation}  on $\sE$ from \cite[Definition 4]{Anan}, for us a an $\SL$-orientation is alway normalized in the sense of {\it loc. cit.}

Following \cite[Definition 1]{Anan}, an $\SL_n$-bundle on some $X\in \Sm/k$ is a vector bundle $E\to X$ together with an isomorphism $\theta:\det E\xrightarrow{\sim} \sO_X$. We use as well the definition of the Euler class $e(E, \theta)\in \sE^{2n, n}(X)$ of an $\SL_n$-bundle in an $\SL$-oriented theory $\sE$ \cite[Definition 4]{Anan},   and the Pontryagin classes $p_m(E)\in \sE^{4m, 2m}(X)$ of an arbitrary vector bundle $E\to X$, for an $\SL$-oriented, $\eta$-invertible theory $\sE$ \cite[Definition 19]{Anan}.

For an $\SL$-oriented theory $\sE$ and a line bundle $L\to X$, $X\in \Sm/k$, one has the twisted cohomology 
\[
\sE^{a,b}(X; L):=\sE^{a+2, b+1}_{0_L}(L) 
\]
The Euler class for $\SL_n$ bundles extends to a theory of Euler classes
\[
e(E)\in \sE^{2n,n}(X;\det^{-1}E)
\]
for arbitrary rank $n$ vector bundles $E\to X$, $X\in \Sm/k$ as follows: Let $L=\det E\to X$. One has a canonical trivialization of $\det(E\oplus L^{-1})$, giving the Thom class $\th(E\oplus L^{-1},can)\in \sE^{2n+2, n+1}_{0_{E\oplus L^{-1}}}(E\oplus L^{-1})$. Pulling back by the inclusion $i:L^{-1}=0_X\oplus L^{-1}\to E\oplus L^{-1}$ gives the class 
\[
e(E):=i^*\th(E\oplus L^{-1},can)\in  \sE^{2n+2, n+1}_{0_{L^{-1}}}(L^{-1})=
\sE^{2n, n}(X;\det^{-1}E).
\]
For details and properties of the twisted cohomology for $\SL$-oriented theories, we refer the reader to \cite[\S3]{LevRaksit} and \cite[\S3]{Anan19}.

For a vector bundle $E\to X$, we have the total Pontryagin class $p(E)=1+\sum_{i\ge1}p_i(E)\in \sE^{4*, 2*}(X)$, which satisfies the Whitney formula for $\SL$-bundles: if 
\[
0\to E'\to E\to E''\to 0
\]
is an exact sequence of $\SL$-vector bundles on $X\in \Sm/k$, then
\[
p(E)=p(E')\cdot p(E'')
\]
In addition, for an $\SL$ bundle $E$ of rank $n$, $p_i(E)=0$ for $2i>n$. These facts follow from \cite[Corollary 3, Lemma 15]{Anan}. For theories $\sE$ which are $\SL$-oriented and $\eta$-invertible, these facts hold for arbitrary vector bundles and arbitrary short exact sequences of vector bundles \cite[Corollary 7.9]{Anan19}.

\begin{remark}\label{rem:MWThy}  As explained in \cite{FaselSri}, the Eilenberg-MacLane spectrum associated to the graded sheaf of Milnor-Witt sheaves $\sK^{MW}_*$ admits Thom classes for bundles with a trivialized determinant, hence $\EM(\sK^{MW}_*)$ defines an $\SL$-oriented theory. Since $\sW=\sK^{MW}_*[\eta^{-1}]$,  the Eilenberg-MacLane spectrum $EM(\sW)$ is $\SL$-oriented and $\eta$ invertible.
\end{remark}

 \begin{remark} Ananyevskiy \cite[Corollary 5.4.]{Anan19} shows, that for $\sE\in \SH(k)$ a motivic ring spectrum such that $\sE^{0,0}$ is a Zariski sheaf on $\Sm/k$, $\sE$ admits a unique $\SL$ orientation.    
\end{remark}

We recall the following results of Ananyevskiy.

\begin{theorem}[\hbox{\cite[Theorem 10]{Anan}}]\label{thm:AnanBSLCoh} For $\sE$ an $\eta$-invertible, $\SL$-oriented theory, one has
\[
\sE^{*,*}(\BSL_n)=\begin{cases} \sE^{**}(k)[p_1,\ldots, p_{m-1}, e]&\text{ for }n=2m\\
 \sE^{**}(k)[p_1,\ldots, p_m]&\text{ for }n=2m+1
 \end{cases}
 \]
Here $p_i=p_i(E_n)$, $e=e(E_n)$, where $E_n\to \BSL_n$ is the universal rank $n$ bundle. Moreover $p_m(E_{2m})=e(E_{2m})^2$.
\end{theorem}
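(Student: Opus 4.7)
My plan is to induct on $n$, taking as base case a direct calculation of $\sE^{**}(B\SL_2)$ and using for the inductive step the Gysin sequence attached to the sphere bundle of the universal $\SL_n$-bundle. The base case $n=2$ follows from an explicit finite-dimensional approximation of $B\SL_2$ (using the standard model $B\SL_2 = \SL_2\backslash E\GL_2$ together with the affine quadric presentation of each approximation), which together with the $\eta$-inverted, $\SL$-oriented structure yields $\sE^{**}(B\SL_2) = \sE^{**}(k)[e]$ with $e=e(E_2)$; the alternate route via Theorem~\ref{thm:BGSplitting} and the analysis of $BN_{\SL_2}(T)$ to be developed later in the paper reaches the same conclusion. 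The rank-two identity $p_1(E_2) = e(E_2)^2$ is extracted from the Whitney formula after splitting $E_2 = L\oplus L^{-1}$ on a $\G_m$-cover.

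For the inductive step, the inclusion $\SL_{n-1} \hookrightarrow \SL_n$ as the stabilizer of the final basis vector exhibits $B\SL_{n-1} \to B\SL_n$ as (up to $\A^1$-equivalence) the complement of the zero section of the universal bundle $E_n$. The resulting Gysin sequence in an $\SL$-oriented theory reads
\[
\sE^{a-2n,b-n}(B\SL_n) \xrightarrow{\cup e(E_n)} \sE^{a,b}(B\SL_n) \xrightarrow{\pi^*} \sE^{a,b}(B\SL_{n-1}) \xrightarrow{\partial} \sE^{a-2n+1,b-n}(B\SL_n).
\]
Since $E_n|_{B\SL_{n-1}} \cong E_{n-1} \oplus \sO$, Whitney multiplicativity yields $\pi^* p_i(E_n) = p_i(E_{n-1})$, while the nowhere-vanishing section from the trivial summand forces $\pi^* e(E_n) = 0$. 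By the inductive hypothesis, $\pi^*$ is therefore surjective onto the Pontryagin subring of $\sE^{**}(B\SL_{n-1})$.

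The crux is to show the boundary $\partial$ vanishes in the appropriate range. I would do this by pulling back along Ananyevskiy's $\SL_2$-splitting principle to $(B\SL_2)^{\lfloor n/2\rfloor}$, where the universal bundle splits as $V_1 \oplus \cdots \oplus V_{\lfloor n/2\rfloor}$ (plus an extra $\sO$ when $n$ is odd). For $n=2m$ even, $e(E_{2m})$ pulls back to $\prod_i e(V_i)$ in $\sE^{**}((B\SL_2)^m) \cong \sE^{**}(k)[e(V_1),\ldots,e(V_m)]$ --- a manifestly regular element --- so $\cup e(E_{2m})$ is injective and the Gysin sequence degenerates to the claimed polynomial ring structure; the same pullback simultaneously forces $p_m(E_{2m}) = e(E_{2m})^2$, since both sides become $\prod_i e(V_i)^2$ under the splitting. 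For $n = 2m+1$ odd, the extra trivial summand makes $e(E_{2m+1})$ pull back to $0$, and injectivity of the pullback implies $e(E_{2m+1}) = 0$ in $\sE^{**}(B\SL_{2m+1})$; the Gysin sequence then collapses and identifies $\sE^{**}(B\SL_{2m+1})$ with the subring of $\sE^{**}(B\SL_{2m})$ generated by $p_1,\ldots,p_m$, namely $\sE^{**}(k)[p_1,\ldots,p_{m-1}, e(E_{2m})^2]$, which by the even-$n$ relation is exactly $\sE^{**}(k)[p_1,\ldots,p_m]$. The main obstacle is establishing the injectivity of the $\SL_2$-splitting pullback on $\sE^{**}$ cleanly; this is precisely where $\eta$-invertibility is essential, since it is what upgrades the Becker--Gottlieb-type transfer for the inclusion $(\SL_2)^{\lfloor n/2\rfloor} \hookrightarrow \SL_n$ (whose homogeneous quotient has a well-behaved Euler characteristic only after $\eta$-inversion) into a genuine left inverse of $\pi^*$ on cohomology.
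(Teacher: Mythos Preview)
The paper does not prove this theorem; it is quoted from Ananyevskiy \cite[Theorem 10]{Anan} as background. Your Gysin-sequence induction is in the same spirit as Ananyevskiy's argument, but as written it is circular. You invoke the $\SL_2$-splitting principle---the injectivity of the pullback $\sE^{**}(\BSL_{2m})\to\sE^{**}((\BSL_2)^m)$---to show that $\cup\, e(E_{2m})$ is injective and that $e(E_{2m+1})=0$. But immediately after stating the theorem, the paper records that splitting principle as a \emph{consequence} of \cite[Theorem 6, Theorem 10]{Anan}, i.e.\ of the very computation you are trying to establish. Ananyevskiy does not presume the splitting principle as input; he derives both the computation of $\sE^{**}(\BSL_n)$ and the splitting principle from his special-linear projective bundle theorem, which gives independent control over the Gysin sequence. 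So you need an argument for injectivity of $\cup\, e(E_{2m})$ (and for $e(E_{2m+1})=0$) that does not already know $\sE^{**}(\BSL_n)$.

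Your proposed escape route---a Becker--Gottlieb transfer for $(\SL_2)^m\hookrightarrow\SL_{2m}$---does not work as stated. The transfer machinery of Theorem~\ref{thm:motBG} requires the Euler characteristic of the fiber to act invertibly on $\sE^{**}$. For $m\ge2$ the inclusion $(\SL_2)^m\subset\SL_{2m}$ is not of full rank (rank $m<2m-1$), so $\chi^{\mathrm{top}}((\SL_2)^m\backslash\SL_{2m}(\C))=0$; the $\GW(k)$-valued Euler characteristic then has rank $0$, and you would have to compute its image in $W(k)$ and prove it is a unit---a nontrivial calculation you do not supply and the paper does not carry out. The paper applies the transfer only to the fiber $N_T\backslash G$ (Theorem~\ref{thm:BGSplitting}), where the Euler characteristic is shown to be a unit; it never attempts this for $(\SL_2)^m\backslash\SL_{2m}$. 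Separately, your odd-$n$ step is incomplete: knowing $e(E_{2m+1})=0$ gives that $\pi^*$ is injective and that the cokernel is $\sE^{**}(\BSL_{2m+1})$ shifted, but identifying the image of $\pi^*$ as exactly $\sE^{**}(k)[p_1,\ldots,p_{m-1},e^2]$ (rather than merely containing it) requires an additional rank/degree comparison you have not spelled out.
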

and  
\begin{theorem}[$\SL_2$-splitting principle] Let $i_m:(\SL_2)^m\to \SL_{2m}$ be the block-diagonal inclusion and let $\sE$ be an $\eta$-invertible, $\SL$-oriented theory. Then 
\[
i_m^*:\sE^{*,*}(\BSL_{2m})\to \sE^{*,*}((\BSL_2)^m)
\]
is injective. Moreover 
\[
 \sE^{*,*}((\BSL_2)^m)= \sE^{*,*}(\BSL_2)^{\otimes_{\sE^{**}(k)}m}=
\sE^{**}(k)[e_1,\ldots, e_m]
\]
where $e_i$ is the pull-back of $e\in \sE^{2,1}(\BSL_2)$ via the $i$th projection, and
\[
i_m^*e=e_1\cdot\ldots\cdot e_m;\quad i_m^*(1+\sum_ip_i)=\prod_{i=1}^r(1+e_i^2).
\]
\end{theorem}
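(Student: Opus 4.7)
The plan is to reduce everything to Theorem~\ref{thm:AnanBSLCoh} together with two formal properties of $\SL$-oriented characteristic classes recalled in Section~\ref{sec:background}: multiplicativity of the Euler class on direct sums of $\SL$-bundles, and the Whitney formula for Pontryagin classes. Writing $A := \sE^{**}(k)$, I first establish the product formula $\sE^{*,*}((\BSL_2)^m) = A[e_1,\ldots,e_m]$. By Theorem~\ref{thm:AnanBSLCoh} applied with $n=2$, $\sE^{*,*}(\BSL_2) = A[e]$ with $e = e(E_2)$ the Euler class of the universal rank-two $\SL$-bundle, free as a graded $A$-module on the monomials $1, e, e^2, \ldots$. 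A Leray--Hirsch-type argument applied inductively to the finite-dimensional approximations $B_k\SL_2 \times (\BSL_2)^{m-1}$, and passed to the colimit via the Milnor exact sequence, yields the K\"unneth identification $\sE^{*,*}((\BSL_2)^m) = A[e_1,\ldots,e_m]$ with $e_i = \pi_i^*e$ for $\pi_i : (\BSL_2)^m \to \BSL_2$ the $i$-th projection.

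Second, I identify $i_m^*$ on the generators of $\sE^{*,*}(\BSL_{2m}) = A[p_1,\ldots,p_{m-1},e]$. The pullback $i_m^*E_{2m}$ is canonically isomorphic to $\pi_1^*E_2 \oplus \cdots \oplus \pi_m^*E_2$ as an $\SL$-bundle, the block-diagonal trivialization of $\det i_m^*E_{2m}$ agreeing with the tensor product of the trivializations of the summands. Since the Thom class of a direct sum of $\SL$-bundles is the external product of the Thom classes, Euler classes are multiplicative, which gives $i_m^*e = e_1\cdots e_m$. The Whitney formula for Pontryagin classes of $\SL$-bundles (\cite[Corollary~3]{Anan}) combined with the rank-two identity $p(E_2) = 1 + e(E_2)^2$ furnished by Theorem~\ref{thm:AnanBSLCoh} (using $p_i = 0$ for $2i > 2$) then yields $i_m^*\bigl(1 + \sum_i p_i\bigr) = \prod_{j=1}^m(1 + e_j^2)$.

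Injectivity of $i_m^*$ is then a purely algebraic statement. Any element of $A[p_1,\ldots,p_{m-1},e]$ admits a unique expansion $P = \sum_{j \geq 0} e^j P_j(p_1,\ldots,p_{m-1})$, and with $\sigma_i := \sigma_i(e_1^2,\ldots,e_m^2)$ and $\pi := e_1\cdots e_m$, the formulas above give $i_m^*P = \sum_j \pi^j P_j(\sigma_1,\ldots,\sigma_{m-1})$. Now $A[e_1,\ldots,e_m]$ is free over $B := A[e_1^2,\ldots,e_m^2]$ with basis the squarefree monomials in the $e_i$, so $B \oplus B\cdot\pi$ is a free rank-two $B$-submodule. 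Using $\pi^2 = \sigma_m$, one splits $i_m^*P = Q_0 + Q_1 \pi$ with
\[
Q_0 = \sum_k \sigma_m^k P_{2k}(\sigma_1,\ldots,\sigma_{m-1}), \qquad Q_1 = \sum_k \sigma_m^k P_{2k+1}(\sigma_1,\ldots,\sigma_{m-1}),
\]
and vanishing of $i_m^*P$ forces $Q_0 = Q_1 = 0$. By the fundamental theorem of symmetric polynomials, $\sigma_1,\ldots,\sigma_m$ are algebraically independent over $A$ in $B$, so each $P_j$ must vanish, whence $P = 0$.

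The principal technical obstacle is securing the K\"unneth identification $\sE^{*,*}((\BSL_2)^m) = A[e_1,\ldots,e_m]$ for a general $\eta$-invertible $\SL$-oriented theory; once this is in place, the rest of the argument is a direct application of the multiplicativity of the $\SL$-oriented Thom, Euler and Pontryagin classes combined with a standard piece of commutative algebra.
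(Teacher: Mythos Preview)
The paper does not give a proof of this statement; it simply records it as ``a consequence of \cite[Theorem 6, Theorem 10]{Anan}.'' Your argument is thus not competing with a proof in the paper but rather reconstructing the content behind that citation, and on the whole it does so correctly: the identification of $i_m^*$ on the generators $e$ and $p_i$ via multiplicativity of Thom/Euler classes and the Whitney formula is exactly right, and your algebraic injectivity argument (splitting into the $B$- and $B\pi$-components and invoking algebraic independence of $\sigma_1,\ldots,\sigma_m$) is clean and correct.

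The one genuine soft spot is the K\"unneth identification $\sE^{*,*}((\BSL_2)^m)=A[e_1,\ldots,e_m]$, which you yourself flag as ``the principal technical obstacle.'' Your appeal to a Leray--Hirsch argument on the finite approximations plus the Milnor sequence is plausible, but in the motivic setting this is not quite routine: one has to know that the finite approximations to $\BSL_2$ already have $\sE$-cohomology free on a truncated list $1,e,\ldots,e^N$ (this is where Ananyevskiy's symplectic projective bundle theorem does the real work), and one has to control the $\lim^1$ term. This is precisely what \cite[Theorem~6]{Anan} packages, so the honest move is either to cite that result at this step or to spell out the Leray--Hirsch argument in detail. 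Once the K\"unneth formula is secured, the remainder of your proof stands.
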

This latter result is a consequence of \cite[Theorem 6, Theorem 10]{Anan}.

\section{$\BGL_n$ and $\BSL_n$}\label{sec:Det}

Let $\pi_n:\BSL_n\to \BGL_n$ be the canonical map, $E_n\to \BGL_n$ the universal rank $n$ vector bundle. For $\sE$ an $\SL$-oriented motivic ring spectrum, we have the pull-back map
\[
\pi^*:\sE^{**}(\BGL_n)\to \sE^{**}(\BSL_n)
\]
In addition, the bundle $\pi^*E_n$ is canonically isomorphic to the universal bundle $\tilde{E}_n\to \BSL_n$, hence there is a canonical trivialization $\theta:\pi^*\det^{-1} E_n\xrightarrow{\sim}\sO_{\BSL_n}$. We may therefore define
\[
\pi^*:\sE^{**}(\BGL_n; \det^{-1}E_n)\to \sE^{**}(\BSL_n)
\]
as the composition
\[
\sE^{**}(\BGL_n; \det^{-1}E_n)\xrightarrow{\pi^*}
\sE^{**}(\BSL_n(\pi_n^*\det^{-1}E_n))\xrightarrow{\theta_*}
\sE^{**}(\BSL_n).
\]
Our main theorem of this section is

\begin{theorem}\label{thm:BSLDecomp} Let $\sE\in \SH(k)$ be an $\SL$-oriented, $\eta$-invertible motivic ring spectrum. Suppose that either   $\sE^{0,0}$ is a Zariski sheaf on $\Sm/k$ or, that the unit map $\mS_k\to \sE$ makes $\sE^{0,0}$ a $\sW$-module. Then for $n=2m$, $\pi^*$ induces an isomorphism
\[
\pi^*:\sE^{**}(\BGL_n)\oplus \sE^{**}(\BGL_n; \det E_n)\to \sE^{**}(\BSL_n).
\]
More precisely, 
\begin{align*}
\pi^*(\sE^{**}(\BGL_n))&=\sE^{**}(k)[p_1,\ldots, p_{m-1}, e^2]\\
\pi^*(\sE^{**}(\BGL_n, \det E_n))&=e\cdot\sE^{**}(k)[p_1,\ldots, p_{m-1}, e^2].
\end{align*}

For $n=2m+1$, $\sE^{**}(\BGL_n; \det E_n))=0$ and 
\[
\pi^*:\sE^{**}(\BGL_n) \to \sE^{**}(\BSL_n)
\]
is an isomorphism.
\end{theorem}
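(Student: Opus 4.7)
The plan is to realize $\pi_n : \BSL_n \to \BGL_n$ as the $\G_m$-torsor associated to the universal determinant line bundle $\det E_n$ on $\BGL_n$, that is, as the complement of the zero section in the total space of $\det E_n$. The localization triangle for this zero-section inclusion, combined with the Thom isomorphism for $\det E_n$ in $\SL$-twisted cohomology, produces a Gysin long exact sequence
\[
\cdots \to \sE^{a-2,b-1}(\BGL_n;\det^{-1}E_n) \xrightarrow{\cdot\, e(\det E_n)} \sE^{a,b}(\BGL_n) \xrightarrow{\pi_n^*} \sE^{a,b}(\BSL_n) \xrightarrow{\partial} \sE^{a-1,b-1}(\BGL_n;\det^{-1}E_n) \to \cdots
\]
The $\eta$-invertibility of $\sE$ will allow me to freely identify the twist by $\det^{-1}E_n$ with the twist by $\det E_n$ as needed to match the statement of the theorem.

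The crucial technical step is to show that multiplication by $e(\det E_n)$ vanishes in this sequence. Here the additional hypothesis on $\sE$ enters decisively: under either the Zariski-sheaf condition on $\sE^{0,0}$ or the $\sW$-module condition, Ananyevskiy's rigidity principle (cf.\ Remark~\ref{rem:MWThy} and the subsequent remark) reduces the question to the universal case $\sE = \EM(\sW)$, and there the Euler class of any line bundle factors as $\eta$ times a Milnor--Witt first Chern class which acts trivially on the relevant twisted cohomology after $\eta$-inversion. Granting this vanishing, the Gysin sequence collapses to a short exact sequence
\[
0 \to \sE^{**}(\BGL_n) \xrightarrow{\pi_n^*} \sE^{**}(\BSL_n) \xrightarrow{\partial} \sE^{*-1,*-1}(\BGL_n;\det E_n) \to 0,
\]
and a splitting is furnished by the twisted Euler class $e(E_n)\in \sE^{2n,n}(\BGL_n;\det^{-1}E_n)$, whose pullback under $\pi_n^*$ (via the canonical trivialization $\pi_n^*\det^{-1}E_n \cong \sO_{\BSL_n}$) is the generator $e = e(\tilde E_n) \in \sE^{2n,n}(\BSL_n)$ of Ananyevskiy's presentation in Theorem~\ref{thm:AnanBSLCoh}.

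To match this direct sum decomposition with Theorem~\ref{thm:AnanBSLCoh}, observe that $\pi_n^*(p_i(E_n)) = p_i$ for every $i$, and that $p_m(E_{2m}) = e(E_{2m})^2$ already on $\BGL_{2m}$; thus $\pi_n^*(\sE^{**}(\BGL_{2m}))$ contains the subring $R := \sE^{**}(k)[p_1,\ldots,p_{m-1},e^2]$, and the short exact sequence together with Ananyevskiy's description of $\sE^{**}(\BSL_{2m})$ as the free $R$-module with basis $\{1,e\}$ forces equality and identifies the twisted summand with $e\cdot R$. For odd $n = 2m+1$, I will invoke the vanishing of the Euler class of any odd-rank $\SL$-bundle in an $\eta$-invertible $\SL$-oriented theory---implicit in Ananyevskiy's presentation $\sE^{**}(\BSL_{2m+1}) = \sE^{**}(k)[p_1,\ldots,p_m]$, in which no Euler class appears---to conclude that $\sE^{**}(\BGL_n;\det E_n) = 0$ and hence that $\pi_n^*$ is an isomorphism. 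The principal obstacle throughout is the vanishing of multiplication by $e(\det E_n)$: once this is established using the rigidity hypotheses on $\sE$, every subsequent step is a formal manipulation of the Gysin sequence together with a bookkeeping against Theorem~\ref{thm:AnanBSLCoh}.
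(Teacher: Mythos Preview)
Your overall outline matches the paper: identify $\pi_n$ with the complement of the zero section in $L:=\det E_n$, run the Gysin sequence, kill the boundary map via $e(L)=0$, then split and compare with Theorem~\ref{thm:AnanBSLCoh}. Two points, however, need correction.

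First, you have misplaced the extra hypothesis on $\sE$. The vanishing $e(\det E_n)=0$ holds in \emph{any} $\SL$-oriented $\eta$-invertible theory, simply because $\det E_n$ has (odd) rank one; this is exactly Lemma~\ref{lem:PStab}, the same result you invoke at the end for odd-rank Euler classes. No Zariski-sheaf or $\sW$-module condition is needed at this step, and there is no ``rigidity reduction to $\EM(\sW)$'' here.

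Second, and this is the real gap, your splitting argument is incomplete. Knowing that $e(E_n)$ pulls back to $e$ shows only that the combined map
\[
\pi^*\colon\sE^{**}(\BGL_n)\oplus\sE^{**}(\BGL_n;L)\longrightarrow\sE^{**}(\BSL_n)
\]
is \emph{surjective}: the untwisted summand hits $\sE^{**}(k)[p_1,\dots,p_{m-1},e^2]$ via the Pontryagin classes, the twisted summand hits $e$, and $\sE^{**}(\BGL_n)$-linearity of the twisted pullback does the rest. But you have not established injectivity. Equivalently, you have not shown that $\partial\circ\pi^*$, restricted to the twisted summand, is injective; a single distinguished element cannot furnish a section of the short exact sequence.

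The paper closes this gap, and it is precisely here that the extra hypothesis enters. One constructs an explicit class $\langle t^\vee\rangle_\sE\in\sE^{0,0}(\BSL_n;L^{-1})$ from the tautological section of $L$ over $L\setminus 0_L=\BSL_n$, observes that cup product with $\langle t^\vee\rangle_\sE$ realizes the twisted $\pi^*$, and computes $\partial\langle t^\vee\rangle_\sE=\eta\cdot 1_\sE$ by a local computation. The Zariski-sheaf or $\sW$-module hypothesis is exactly what legitimizes this local computation. Once one has $\partial\circ\pi^*=\eta\cdot(-)$ on the twisted summand, $\eta$-invertibility gives the splitting, and your subsequent bookkeeping against Theorem~\ref{thm:AnanBSLCoh} then goes through.
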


\begin{proof} Write $\bar{\pi}:L\to \BGL_n$ for the line bundle $\det E_n$. We may identify the morphism $\pi:\BSL_n\to \BGL_n$ with the principal $\G_m$-bundle $L\setminus 0_L$ associated to $L\to \BGL_n$.   Identifying $\sE^{**}(\BGL_n)$ with $\sE^{**}(L)$ via $\bar{\pi}^*$ and using homotopy invariance gives us the localization sequences
\begin{multline*}
\ldots\to\sE^{a,b}(\BGL_n)\xrightarrow{\pi^*}\sE^{a,b}(\BSL_n)\\\to \sE^{a-1,b-1}(\BGL_n;L)
\xrightarrow{\phi_{a-1}} \sE^{a+1,b}(\BGL_n)\to\ldots
\end{multline*}
where $\phi_{a-1}$ is cup product with the Euler class $e(L)$. 

By Lemma~\ref{lem:PStab},  $e(L)=0$ and thus the localization sequence splits into short exact sequences
\[
0\to \sE^{a,b}(\BGL_n)\xrightarrow{\pi_n^*}\sE^{a,b}(\BSL_n)\xrightarrow{\del_{a,b}} \sE^{a-1,b-1}(\BGL_n;L)\to 0.
\]

Next, we define a splitting to $\del_{a,b}$. Let $t\in H^0(L,\bar{\pi}^*L)$ be the tautological section, giving a generator of $\bar{\pi}^*L$ over $L\setminus 0_L$. Let $t^\vee\in H^0(L\setminus 0_L,\bar{\pi}^*L^{-1})$ be the dual of $t$. This  defines the $\bar{\pi}^*L^{-1}$-valued quadratic form $q(s)=s^2\cdot t^\vee$, giving the section $\<t^\vee\>$  of the sheaf of twisted Witt groups  $W(L^{-1})$ over $L\setminus \{0_L\}$. 

Suppose  the unit map $\mS_k\to \sE$ makes $\sE^{**}(-)$ into a $\sW(-)$ module.  This gives us the section   $\<t^\vee\>_\sE:=\<t^\vee\>\cdot 1_\sE\in \sE^{0,0}(L\setminus\{0_{\BGL_n}\};L^{-1})$, that is $\<t^\vee\>_\sE\in \sE^{0,0}(\BSL_n, L^{-1})$.  One finds
\[
\del \<t^\vee\>_\sE=\eta\cdot 1_\sE\in 
\sE^{-1,-1}(\BGL_n)
\]
by making a computation in local coordinates. Indeed, we may first make the computation for $\sE=\EM(\sW)$, $\EM(\sW)^{a,b}(-)=H^{a-b}(-, \sW)$, in which case  $\sE^{-1,-1}(-)=H^0(-, \sW)$, justifying the local computation. In general, we have $\<t^\vee\>_\sE=\<t^\vee\>\cdot 1_\sE$ and so 
$\del \<t^\vee\>_\sE= \del \<t^\vee\>\cdot 1_\sE= \eta\cdot 1_\sE$. In addition, the map 
\[
\pi^*:\sE^{a,b}(\BGL_n; L)\to \sE^{a,b}(\BSL_n)
\]
is equal to the composition
\[
\sE^{a,b}(\BGL_n; L)\xrightarrow{\pi^*}\sE^{a,b}(\BSL_n; L)
\xrightarrow{\<t^\vee\>_\sE\cdot (-)}\sE^{a,b}(\BSL_n),
\]
which one sees by noting that multiplication by $t^\vee$ is the same as the canonical trivialization $\pi^*L\to \sO_{\BSL_n}$.   Thus 
\[
\del\circ \pi^*=\eta\cdot (-):\sE^{a,b}(\BGL_n; L)\to \sE^{a-1,b-1}(\BGL_n; L)
\]
which is an isomorphism, since $\sE$ is an $\eta$-invertible theory.

In case $\sE^{0,0}$ is a Zariski sheaf, as $\sE$ is $\eta$-invertible, $\sE^{-1,-1}$ is also a Zariski sheaf, so the computation of $\del \<t^\vee\>_\sE$ may be made locally as above, with the same result.
 
It remains to compute the image of the two summands. For $n=2m+1$, the map
\[
\pi^*:\sE^{**}(\BGL_n)\to \sE^{**}(\BSL_n).
\]
is surjective: since the Pontryagin classes are defined for arbitrary vector bundles, and are functorial,  we have
\[
p_i=\pi^*(p_i(E_n));\quad i=1,\ldots, m. 
\]
The surjectivity then follows from Ananyevskiy's description of $\sE^{**}(\BSL_n)$, and this implies that the remaining summand $\sE^{*, *}(\BGL_n; L)$ must be zero.

Now take $n=2m$. We  write $R_{m-1}$ for the subring $\sE^{**}(k)[p_1,\ldots, p_{m-1}]$ of 
$\sE^{**}(\BSL_n)$.  The same argument  as above together with the identity
\[
p_m(\tilde{E}_{2m})=e(\tilde{E}_{2m})^2
\]
implies that $\pi_n^*(\sE^{**}(\BGL_n))$ contains  $R_{m-1}[e^2]$, and  $e$ is in $\pi_n^*(\sE^{**}(\BGL_n; L))$. Since $\sE^{**}(\BGL_n; L)$ is a module for $\sE^{**}(\BGL_n)$, this shows that  the $R_{m-1}[e^2]$-submodule
$e\cdot R_{m-1}[e^2]$ of $\sE^{**}(\BSL_n)$ is contained in $\pi^*_n(\sE^{**}(\BGL_n; L))$. Since
\[
\sE^{**}(\BSL_n)=R_{m-1}[e^2]\oplus e\cdot R_{m-1}[e^2],
\]
the result for $n=2m$ follows.
\end{proof}

\begin{remark}
Since  the Eilenberg-MacLane spectrum $\EM(\sW)$ is $\SL$-oriented and $\eta$ invertible  we may apply theorem~\ref{thm:BSLDecomp} to give a computation of $H^*(\BGL_n, \sW)$ and $H^*(\BGL_n, \sW(\det E_n))$: For $n=2m$, 
\begin{align*}
&H^*(\BGL_n, \sW)=W(k)[p_1,\ldots, p_{m-1}, p_m], \\
&H^*(\BGL_n, \sW(\det E_n))=e\cdot W(k)[p_1,\ldots, p_{m-1}, p_m]
\end{align*}
and for $n=2m+1$, 
\begin{align*}
&H^*(\BGL_n, \sW)=W(k)[p_1,\ldots, p_{m-1}, p_m], \\
&H^*(\BGL_n, \sW(\det E_n))=0.
\end{align*}
\end{remark}

We used the following result, a slight extension of a result of Ananyevskiy's \cite[Theorem 7.4]{Anan19}, in the proof of Theorem~\ref{thm:BSLDecomp}. 

\begin{lemma}\label{lem:PStab} Let   $\sE\in \SH(k)$ be an $\SL$-oriented, $\eta$-invertible motivic ring spectrum. Then for $X\in \Sm/k$ and  $V\to X$ a vector bundle of odd rank $r$, we have $e(V)=0$ in $\sE^{2r,r}(X;\det V^{-1})$.
\end{lemma}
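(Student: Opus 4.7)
The plan is to reduce the general statement to Ananyevskiy's Theorem 7.4 of \cite{Anan19}, which handles the case of $\SL$-bundles of odd rank, via Zariski-local trivializations of $L := \det V$ together with descent in $\SH(X)$.

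First, I choose a Zariski open cover $\{j_i : U_i \hookrightarrow X\}$ equipped with trivializations $\theta_i : L|_{U_i} \xrightarrow{\sim} \sO_{U_i}$; such a cover exists because $L$ is Zariski locally trivial. Each pair $(V|_{U_i}, \theta_i)$ is then an $\SL_r$-bundle of odd rank $r$, and Ananyevskiy's theorem yields $e(V|_{U_i}, \theta_i) = 0$ in $\sE^{2r, r}(U_i)$. The dual trivialization $\theta_i^{-1} : L^{-1}|_{U_i} \xrightarrow{\sim} \sO_{U_i}$ induces an isomorphism $\sE^{2r, r}(U_i; L^{-1}|_{U_i}) \xrightarrow{\sim} \sE^{2r, r}(U_i)$. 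Unwinding the definition $e(V) = i^* \th(V \oplus L^{-1}, can)$, one checks that under $\theta_i^{-1}$ the canonical trivialization of $\det(V \oplus L^{-1}) = L \otimes L^{-1}$ corresponds to $\theta_i$ trivializing $\det V$, so this isomorphism identifies the twisted class $e(V|_{U_i})$ with the untwisted class $e(V|_{U_i}, \theta_i) = 0$. Hence $j_i^* e(V) = 0$ for every $i$.

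To pass to global vanishing, I invoke the weak Nisnevich descent property for $\SH(X)$ recalled in Section~\ref{sec:Transfer}. The class $e(V)$ is represented by a morphism in $\SH(X)$, with the twist by $L^{-1}$ realized through the Thom-space object associated to $L^{-1}$. A morphism in $\SH(X)$ vanishes if and only if its pullback to each member of a Nisnevich cover vanishes; this follows by applying weak Nisnevich descent to the cofiber of the representing morphism, which locally agrees with the split cofiber of the zero map. The global splitting then forces $e(V) = 0$ in $\sE^{2r, r}(X; \det^{-1} V)$.

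The main technical step is the descent argument, since for an arbitrary cohomology theory local vanishing need not descend to global vanishing. Here it does, because the twist is encoded by an object of $\SH(X)$, so the issue reduces to weak Nisnevich descent for morphisms between objects of $\SH(X)$, which is available for the full six-functor formalism used throughout the paper.
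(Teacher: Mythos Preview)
Your reduction to Ananyevskiy's result on each $U_i$ is fine, and the identification of $j_i^*e(V)$ with the untwisted Euler class via the local trivialization is correct. The problem is the descent step. Weak Nisnevich descent says that a \emph{given} morphism in $\SH(X)$ is an isomorphism if and only if its pullbacks to a Nisnevich cover are isomorphisms. It does \emph{not} say that a morphism is zero if and only if its pullbacks are zero, and your cofiber argument does not repair this. Knowing that $j_i^*\cofib(f)\cong j_i^*A[1]\oplus j_i^*B$ for each $i$ does not give you a global candidate morphism $\cofib(f)\to A[1]\oplus B$ to which descent could be applied; the local splittings need not glue. Concretely, local vanishing of a cohomology class implying global vanishing would kill the boundary map in every Mayer--Vietoris sequence, and one sees easily (already for Picard groups, or for nontrivial extensions in $D(\QCoh)$) that this fails.

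The paper avoids this issue entirely by rerunning Ananyevskiy's argument directly in the twisted setting rather than reducing to it. One works on $X\times(\A^1\setminus\{0\})$ with the automorphism $\rho=t\cdot\id_{\pi^*V}\oplus\id_{\pi^*L}$ of $\pi^*(V\oplus L)$, where $L=\det^{-1}V$. Since $r$ is odd, comparing Thom classes yields $\<t\>\cdot\pi^*e(V)=\pi^*e(V)$ in the twisted group $\sE^{2r,r}(X\times(\A^1\setminus\{0\});\det^{-1}V)$. Applying the boundary map in the localization sequence for $X\times\{0\}\subset X\times\A^1$ to $(\<t\>-1)\cdot\pi^*e(V)$ gives $\eta\cdot e(V)=0$ globally, and $\eta$-invertibility finishes the argument. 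The point is that this is a single global identity, so no gluing is needed.
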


\begin{proof} Let $L=\det^{-1}V$. By definition, $\sE^{2r,r}(X;L)=\sE^{2r+2,r+1}_{0_L}(L)$. We have the canonical trivialization $\lambda:\det(V\oplus L)\to \sO_X$, giving the Thom class $\th(V\oplus L,\lambda)\in \sE^{2r+2, r+1}_{0_V\times 0_L}(V\oplus L)$, and the Euler class $e(V)\in \sE^{2r,r}(X;L)$ is by definition the pullback $s_{0_V}^*(\th(V\oplus L,\lambda))\in \sE^{2r+2,r+1}_{0_L}(L)$, where $s_{0_V}:L\to V\oplus L$ is the closed immersion $s_{0_V}(x)=(0_V,x)$. 

Let $\pi:X\times \A^1\setminus\{0\}\to X$ be the projection, let $t$ be the standard unit on $\A^1\setminus\{0\}$ and let $\rho:\pi^*(V\oplus L)\to \pi^*(V\oplus L)$ be the automorphism $t\cdot \id_{\pi^*V}\oplus\id_{\pi^*L}$. We also use $\pi$ to denote the induced projections $\pi^*V\to V$, $\pi^*L\to L$, and so on, and let $\pi^*s_{0_V}:\pi^*L\to \pi^*(V\oplus L)$ denote the map induced by $s_{0_V}$ . The proof of  \cite[Theorem 7.4]{Anan19} gives the identity
\[
\<t^{2r+1}\>\cdot \th(\pi^*(V\oplus L),\pi^*\lambda)=\rho^*\th(\pi^*(V\oplus L),\pi^*\lambda)
\]
and since $\rho\circ \pi^*s_{0_V}=\pi^*s_{0_V}$, this implies
\[
\<t\>\cdot \pi^*e(V)=\<t^{2r+1}\>\cdot \pi^*e(V)=\pi^*e(V)\in \sE^{2r+2,r+1}_{0_L\times \A^1\setminus\{0\}}(L\times \A^1\setminus\{0\}).
\]

Following  Ananyevskiy's argument (where the minus sign disappears since we are using the ``alternate'' normalization mentioned in \cite[Remark 6.5]{Anan19}),  applying the boundary in the localization sequence
\begin{multline*}
\ldots\to\sE^{2r, r}(X\times \A^1\setminus\{0\}; \det^{-1}V)\xrightarrow{\delta}\sE^{2r-1, r-1}(X; \det^{-1}V)\\\to \sE^{2r+1, r+1}(X\times\A^1;\det^{-1}V)\to\ldots
\end{multline*}
gives the identity
\[
0=\delta((\<t\>-1)\cdot\pi^*e(V))=\eta\cdot e(V)
\]
in $\sE^{2r-1, r-1}(X; \det^{-1}V)$. Since $\sE$ is by assumption an $\eta$-invertible theory, we have $e(V)=0$.
\end{proof}

\section{Witt cohomology of $BN_T(\SL_2)$} \label{secWittCoh}

In this section, for $V\to X$ a vector bundle, we let $f:\P(V)\to X$ denote the associated projective space bundle $\Proj_{\sO_X}(\Sym^* V^\vee)$. We have  the tautological subbundle $O(-1)\to \P(V)$ of $f^*V$ and its dual $O(1)$.

We have the $\SL$-oriented theory $\EM(\sW)^{a,b}(X):=H^{a-b}(X, \sW)$ on $\Sm/k$, where $\sW$ is the sheaf of Witt rings. Our main goal in this section is to compute  $H^*(BN_T(\SL_2), \sW)$, where $N_T(\SL_2)\subset \SL_2$ is the normalizer of the standard torus 
\[
T:=\left\{\begin{pmatrix}t&0\\0&t^{-1}\end{pmatrix}\right\}\subset \SL_2
\]
We write $N:=N_T(\SL_2)$. $N$ is generated by $T$ and an additional element $\sigma=\begin{pmatrix}0&1\\-1&0\end{pmatrix}$, and sending $\sigma$ to $-1$ gives us the exact sequence
\[
1\to T\to N\to \{\pm1\}\to0
\]
This gives us the representation $\rho^-:N\to \G_m$ sending $T$ to $1$ and $\sigma$ to $-1$ (see \S\ref{sec:Rank2Bundles} below).

We work over a field $k$ of characteristic $\neq2$. In this case, the computation of section~\ref{sec:conj} gives the isomorphism
\[
N\backslash\SL_2\cong N_T(\GL_2)\backslash\GL_2\cong\P^2\setminus C
\]
with $C$ the conic defined by $Q:=T_1^2-4T_0T_2$.

The natural right $\SL_2$-action on $N\backslash\SL_2$ can be described in the model $\P^2\setminus C$ as follows.
 Let $F=\A^2$ with the standard (right) $\SL_2$-action. We have the canonical map of schemes
\[
sq:\P(F)\to \P(\Sym^2F)
\]
induced by the squaring map $sq:F\to \Sym^2F$, where  $sq(v)$ is the image of $v\otimes v$ under the canonical surjection $F\otimes F\to \Sym^2F$. As the right $\SL_2$-action on $\Sym^2F$ is induced by the diagonal action on $F\otimes F$, $(v\otimes w)\cdot g:=v\cdot g\otimes w\cdot g$, the map $sq$ is $\SL_2$-equivariant. 

This identifies $C\subset \P^2$ with $sq((\P(F))\subset \P(\Sym^2F)$.  Furthermore, the quadratic polynomial $Q=T_1^2-4T_0T_2$ gives us a global section $\<Q\>$ of $\sO_{\P^2\setminus C}^\times/\sO_{\P^2\setminus C}^{\times\ 2}$,   with value $Q/L^2$ on $\P^2\setminus (C\cup\{L=0\})$ for each linear form $L$. As the map sending a unit $u$ to the class of the rank one quadratric form $\<u\>$ descends to a map of sheaves $\sO^\times/\sO^{\times 2}\to \sGW^\times$, we have the section $\<q\>$ of $\sGW$ on $\P^2\setminus C$ corresponding to $\<Q\>$.

Since $C$ is stable under the $\SL_2$ action, the form $Q$ is $\SL_2$ invariant up to a scalar, but as 
\[
g\mapsto \frac{g^*Q}{Q}
\]
defines a character of $\SL_2$, $Q$ is $\SL_2$ invariant and thus $\<Q\>\in H^0(\P^2\setminus C,\sO^\times/\sO^{\times 2})$ is also $\SL_2$ invariant.

We may use the model 
\[
(N\backslash\SL_2)\times^{\SL_2}E\SL_2\cong N\backslash E\SL_2
\]
for $BN$.  The  $\SL_2$-invariant section $\<Q\>$ of $\sO_{N\backslash\SL_2}^\times/\sO_{N\backslash\SL_2}^{\times\ 2}$  on $N\backslash\SL_2$ thus descends to a global section  $\<\bar{Q}\>$ of $\sO^\times/\sO^{\times\ 2}$ on $BN$,  which gives us the global section $\<\bar{q}\>$ of the sheaf of Grothendieck-Witt rings $\sGW$ on $BN$. We will also denote the image of $\<\bar{q}\>$ to $H^0(BN,\sW)$ by $\<\bar{q}\>$, using the context to make the meaning clear.

 Since $\SL_2$ is special, the bundle $E\SL_2\to \BSL_2$ is Zariski locally trivial, and thus the same holds for $BN\to \BSL_2$, as explained in section~\ref{sec:conj}.

Our description of $N\backslash\SL_2$ as $\P(\Sym^2F)\setminus sq(\P(F))$ realizes $BN$ as an open subscheme of the $\P^2$-bundle $\P(\Sym^2F)\times^{\SL_2}E\SL_2\to \BSL_2$ with complement the $\P^1$-bundle $\P(F)\times^{\SL_2}E\SL_2\to \BSL_2$:
\[
\xymatrix{
BN\ar[r]^-j\ar[dr]_p&\P(\Sym^2F)\times^{\SL_2}E\SL_2\ar[d]^{\tilde{p}}&\ar[l]_-i
\P(F)\times^{\SL_2}E\SL_2\ar[dl]^{\bar{p}}\\
&\BSL_2
}
\]

The representation $\rho^-:N\to \G_m$ gives us the line bundle   $\gamma$ on $BN$, generating $\Pic (BN)$.  We have $\Pic(N\backslash\SL_2)=\Z/2$ with generator the restriction of $O_{\P^2}(1)$, and $\gamma$ is the same as the bundle induced by the restriction of $p_1^*O_{\P^2}(1)$ via the canonical $\SL_2$-linearization of $O_{\P^2}(1)$. Similarly, we have the bundle $p_1^*O(m)$ on $\P(F)\times^{\SL_2}E\SL_2$, induced by the  canonical $\SL_2$-linearization of $O_{\P^1}(1)$.

\begin{lemma} \label{lem:Witt1} 
\[
H^n(\P(F)\times^{\SL_2}E\SL_2,\sW)=\begin{cases} W(k)&\text{ for }n=0\\0&\text{ else.}\end{cases}
\]
\end{lemma}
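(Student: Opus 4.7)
The plan is to identify $\P(F)\times^{\SL_2}E\SL_2$ with the classifying space $BB$ of the Borel subgroup of $\SL_2$, reduce via $\A^1$-invariance of Witt cohomology to computing $H^*(\BGL_1,\sW)$, and invoke the edge case $n=1$ of Theorem~\ref{thm:BSLDecomp}.  Since the right action of $\SL_2$ on $\P(F)=\P^1$ is transitive with point stabilizer a Borel subgroup $B\subset\SL_2$, fixing a basepoint of $\P(F)$ gives an identification $\P(F)\cong B\backslash\SL_2$ of right $\SL_2$-schemes, and therefore
\[
\P(F)\times^{\SL_2}E\SL_2 \;\cong\; B\backslash E\SL_2 \;=:\; BB.
\]

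Using the Levi decomposition $B = T\ltimes U$, where $T\cong\G_m$ is the standard maximal torus and $U\cong\G_a$ is the unipotent radical, the inclusion $T\hookrightarrow B$ induces at each finite level of the Totaro model from section~\ref{sec:conj} a Zariski locally trivial $\G_a$-torsor $B_nT\to B_nB$.  Since $\sW$ is strictly $\A^1$-invariant, its Nisnevich cohomology is preserved under such pullbacks; passing to the colimit yields an isomorphism $H^*(BB,\sW)\xrightarrow{\sim}H^*(BT,\sW)$.  Since $T\cong\GL_1$, this reduces the problem to computing $H^*(\BGL_1,\sW)$.

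To compute this, I apply Theorem~\ref{thm:BSLDecomp} to the spectrum $\sE=\EM(\sW)$, which is $\SL$-oriented and $\eta$-invertible, and whose $(0,0)$-part $\sW$ is both a Zariski sheaf and tautologically a $\sW$-module.  In the edge case $n=1 = 2\cdot 0 + 1$, the theorem produces an isomorphism
\[
\pi_1^*:\sE^{*,*}(\BGL_1) \xrightarrow{\sim} \sE^{*,*}(\BSL_1) = \sE^{*,*}(\Spec k),
\]
which translates back to $H^*(\BGL_1,\sW) = H^*(\Spec k,\sW) = W(k)$ concentrated in degree zero, as claimed.

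The only technical point requiring care is the $\A^1$-invariance step: one must verify that the finite-level quotient maps $T\backslash E_n\GL_2 \to B\backslash E_n\GL_2$ are genuinely Zariski locally trivial $\G_a$-torsors over smooth bases, so that homotopy invariance of $\sW$-cohomology can be invoked.  Once that standard check is in place, the remainder of the argument is a direct application of Theorem~\ref{thm:BSLDecomp}, noting in particular that the untwisted $\BGL_1$-summand is computed by its image in $\sE^{*,*}(\BSL_1)$, which is just $\sE^{*,*}(\Spec k)$.
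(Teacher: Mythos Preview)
Your argument is essentially correct, with one clarification: the map $B_nT\to B_nB$ is not a $\G_a$-torsor, since $T$ is not normal in $B$; computing transition functions one finds that the fiber $T\backslash B\cong\A^1$ transforms affinely, so this is a Zariski locally trivial affine $\A^1$-bundle rather than a principal bundle. That still suffices for $\A^1$-invariance of $\sW$-cohomology, so your reduction to $B\G_m$ goes through. You should also remark that your $BT=T\backslash E\GL_2$ and the $\BGL_1$ of Theorem~\ref{thm:BSLDecomp} use different Totaro models (different embeddings into $\GL_n$), but these have the same $\A^1$-homotopy type, so the identification is harmless.

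The paper's proof takes a different, more self-contained route: it uses $\eta$-invertibility directly, observing that since $\sW=\sK^{MW}_*[\eta^{-1}]$ the Hopf map $\A^2\setminus\{0\}\to\P^1$ induces an isomorphism on $\sW$-cohomology, so one may replace $\P(F)\times^{\SL_2}E\SL_2$ by $(\A^2\setminus\{0\})\times^{\SL_2}E\SL_2$; the $\SL_2$-equivariant $\A^1$-bundle $r:\SL_2\to\A^2\setminus\{0\}$ given by projection to the first row then reduces this to $\SL_2\times^{\SL_2}E\SL_2=E\SL_2$, which is $\A^1$-contractible. The paper's argument makes the role of $\eta$ completely transparent and avoids any forward reference, whereas yours hides the $\eta$-invertibility inside the appeal to Theorem~\ref{thm:BSLDecomp}; on the other hand, your identification of the space as $B\G_m$ is conceptually clean and would adapt immediately to any $\SL$-oriented $\eta$-invertible theory satisfying the hypotheses of that theorem.
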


\begin{proof} The element $\eta\in K^{MW}_{-1}(k)$ is realized in $\SH(k)$ as the stable Hopf map, the unstable version of which is the map $\eta_2:\A^2\setminus\{0\}\to \P^1$, $\eta_2(x,y)=[x:y]$.   As $\sW=\sK^{MW}_*[\eta^{-1}]$,  the projection
\[
\eta_2\times\id:(\A^2\setminus\{0\})\times^{\SL_2}E\SL_2\to
\P(F)\times^{\SL_2}E\SL_2
\]
induces an isomorphism
\[
\eta_2^*:H^*(\P(F)\times^{\SL_2}E\SL_2,\sW)\to H^*((\A^2\setminus\{0\})\times^{\SL_2}E\SL_2,\sW).
\]
Here we give $\A^2\setminus\{0\}$ the right $\SL_2$-action induced from the standard right action on $\A^2=F$.  

Giving $\SL_2$ the right $\SL_2$-action via right multiplication, we have the $\SL_2$-equivariant map
\[
r:\SL_2\to \A^2\setminus\{0\};\quad r\begin{pmatrix}a&b\\c&d\end{pmatrix}=(a,b)
\]
making $\SL_2$ an $\A^1$-bundle over $\A^2\setminus\{0\}$. Thus, by homotopy invariance,
\[
r^*:H^*((\A^2\setminus\{0\})\times^{\SL_2}E\SL_2,\sW)
\to H^*(\SL_2\times^{\SL_2}E\SL_2,\sW)=H^*(E\SL_2,\sW)
\]
is also an isomorphism. Since $E\SL_2$ is $\A^1$-contractible, we have
\[
H^n(E\SL_2,\sW)=\begin{cases} W(k)&\text{ for }n=0\\0&\text{ else,}\end{cases}
\]
and the lemma is proven.
\end{proof} 

Let $T\to \P(\Sym^2F)$ be the tangent bundle. The $\SL_2$-action on  $\Sym^2F$ extends canonically to an action on $T$ over its action on $\P(\Sym^2F)$, and $T\times^{\SL_2}E\SL_2\to 
\P(\Sym^2F)\times^{\SL_2}E\SL_2$ is the relative tangent bundle of $\P(\Sym^2F)\times^{\SL_2}E\SL_2$ over $BN$, which we denote by $\sT$.

\begin{lemma} \label{lem:Witt2} 1. The map
\[
\pi^*:H^*(\BSL_2,\sW)\to H^*(\P(\Sym^2F)\times^{\SL_2}E\SL_2,\sW)
\]
is an isomorphism. \\[3pt]
2. $H^*(\P(\Sym^2F)\times^{\SL_2}E\SL_2,\sW(p_1^*O_{\P^2}(1)))$  is a free $H^*(\BSL_2,\sW)$-module with degree two generator $e(\sT)$.
\end{lemma}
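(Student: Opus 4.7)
My plan is to apply a projective bundle formula for Witt cohomology to the Zariski-locally trivial $\P^2$-bundle $\tilde p \colon \P(\Sym^2 F) \times^{\SL_2} E\SL_2 \to \BSL_2$ (the map denoted $\pi$ in the lemma). The key geometric observation is that $\tilde V := \Sym^2 F \times^{\SL_2} E\SL_2$ has rank $3$ and trivial determinant: since $\SL_2$ acts trivially on the determinants of its representations, $\det(\Sym^2 F)$ is the trivial character, so $\tilde V$ is an $\SL_3$-bundle. The Euler sequence $0 \to \sO \to \tilde p^* \tilde V^\vee \otimes \sO(1) \to \sT \to 0$ yields $\det \sT \cong \sO(3) \equiv \sO(1) \pmod{2\Pic}$, placing $e(\sT)$ in $H^2(\P(\tilde V), \sW(\sO(1)))$ as the lemma requires.

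The next step is to compute the Witt cohomology of the fiber $\P^2$. By a standard computation (via the localization sequence for $\P^1 \hookrightarrow \P^2$ with affine complement, or by Balmer-Walter style duality), one finds $H^i(\P^2, \sW) = W(k)$ if $i = 0$ and zero otherwise, and $H^i(\P^2, \sW(\sO(1))) = W(k)$ if $i = 2$ and zero otherwise, with the degree $2$ generator being the Euler class of $T_{\P^2}$. This rank-one structure on the fiber is precisely what the two claims demand.

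I then apply the projective bundle theorem. At each finite level $B_m\SL_2$ (smooth quasi-projective), the restriction $\P(\tilde V_m) \to B_m\SL_2$ is a Zariski-locally trivial $\P^2$-bundle to which Ananyevskiy's projective bundle theorem, applied to the $\SL$-oriented $\eta$-invertible theory $\sE := \EM(\sW)$ and the rank $3$ $\SL_3$-bundle $\tilde V_m$, applies. Combined with the fiber computation, this yields for each $m$ isomorphisms
\[
\tilde p_m^* \colon H^*(B_m\SL_2, \sW) \xrightarrow{\sim} H^*(\P(\tilde V_m), \sW),
\]
\[
H^*(\P(\tilde V_m), \sW(\sO(1))) \cong \tilde p_m^* H^{*-2}(B_m\SL_2, \sW) \cdot e(\sT_m),
\]
naturally in $m$. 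Passing to the limit via the Milnor exact sequences (with vanishing $R^1\lim$ arising from the splittings being natural in $m$) then promotes these to the desired statements (1) and (2) over $\BSL_2$.

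The main obstacle is the projective bundle formula in the twisted case $\sW(\sO(1))$, and in particular identifying the generator as $e(\sT)$ rather than some other degree $2$ class. This identification is verified fiberwise: restricted to a fiber $\P^2$, the class $e(\sT)$ is the Euler class of $T_{\P^2}$, which generates the rank-one $H^2(\P^2, \sW(\sO(1)))$ by the fiber computation, and a Leray-Hirsch type argument then propagates this generation globally.
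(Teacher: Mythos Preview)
Your proposal is correct and follows essentially the same approach as the paper: both reduce to the fiberwise computation of $H^*(\P^2,\sW)$ and $H^*(\P^2,\sW(\sO(1)))$ (the paper cites Fasel's result), followed by a degenerating Leray spectral sequence / Leray-Hirsch argument, with the generator in the twisted case identified as $e(\sT)$ via its restriction $e(T_{\P^2})$ on each fiber. One small point where the paper is more explicit: it justifies that $e(T_{\P^2})$ actually generates $H^2(\P^2,\sW(\sO(1)))$ via the Gau\ss--Bonnet formula $\pi_*(e(T_{\P^2}))=\chi(\P^2/k)$ together with Hoyois's computation $\chi(\P^2/k)=2\langle 1\rangle+\langle -1\rangle$, which maps to the unit in $W(k)$---you assert this generation as part of the ``standard computation,'' which is fine, but it is the one step worth making explicit.
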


\begin{proof} 
(1) We use the Leray spectral sequence
\[
E_1^{p,q}=\oplus_{x\in \BSL_2^{(p)}}H^q(\pi^{-1}(x),\sW)\Rightarrow H^{p+q}(\P(\Sym^2F)\times^{\SL_2}E\SL_2,\sW).
\]
As $\P(\Sym^2F)\times^{\SL_2}E\SL_2\to \BSL_2$ is Zariski locally trivial, it follows that $\pi^{-1}(x)\cong \P^2_{k(x)}$ for all $x\in \BSL_2$. Fasel \cite[theorem 11.7]{FaselPI} computes $H^*(\P^2_{k(x)}, \sW)$ as
\[
H^n(\P^2_{k(x)}, \sW)=\begin{cases}W(k(x))&\text{ for }n=0\\0&\text{ else.}
\end{cases}
\]
Feeding this into the Leray spectral sequence gives the result.

For (2), we do the same, using the Leray spectral sequence converging to 
$H^{p+q}(\P(\Sym^2F)\times^{\SL_2}E\SL_2,\sW(p_1^*O(1))$ and the isomorphism
\[
H^n(\P^2_{k(x)}, \sW(O(1)))=\begin{cases}W(k(x))&\text{ for }n=2\\0&\text{ else,}
\end{cases}
\]
again by \cite[theorem 11.7]{FaselPI}.  The Gysin map $i_{p*}:H^{n-2}(\Spec k(x), \sW)\to  (\P^2_{k(x)}, \sW(\omega_{\P^2}))$ is left inverse to the pushforward  $\pi_*:H^n(\P^2_{k(x)}, \sW(\omega_{\P^2}))\to H^{n-2}(\Spec k(x), \sW)$; it is not hard to see that 
for $n=2$, these are the  isomorphism and its inverse, as given by Fasel's computation.  As $\omega_{\P^2_{k(x)}/k(x)}=O_{\P^2}(-3)$, we may replace $\sW(\omega_{\P^2})$ with $\sW(O(1)))$. 

The motivic Euler characteristic $\chi(\P^2_{k(x)}/k(x))\in \GW(k(x))$ has been computed by Hoyois as $2\<1\>+\<-1\>$, hence its image in $W(k)$ is $\<1\>$. We have the Gau{\ss}-Bonnet formula \cite[Theorem 1.5]{LevRaksit}, which says that $\chi(\P^2_{k(x)}/k(x))=\pi_*(e(T_{\P^2_{k(x)}}))$. This in turn implies that $e(T_{\P^2_{k(x)}})$ is a $W(k)$-generator for $H^2(\P^2_{k(x)}, \sW(\omega_{\P^2}))$ (one can also show this by computing the local indices associated to a section of $T_{\P^2}$ with isolated zeros, we leave the details of this to the reader). This shows that the Euler class of the relative tangent bundle $e(\sT)\in H^2(\P(\Sym^2F)\times^{\SL_2}E\SL_2,\sW(p_1^*O(1))$ gives a global class restricting to the generators in the $E_1$-term, and thus gives a generator for $H^*(\P(\Sym^2F)\times^{\SL_2}E\SL_2,\sW(p_1^*O_{\P^2}(1)))$ as a free $H^*(\BSL_2,\sW)$-module.
\end{proof}

Recall we have the closed immersion
\[
i:\P(F)\times^{\SL_2}E\SL_2\to \P(\Sym^2F)\times^{\SL_2}E\SL_2
\]
with open complement
\[
j:BN\to \P(\Sym^2F)\times^{\SL_2}E\SL_2.
\]

\begin{proposition}\label{prop:WittCoh1} 1. The map $p^*:H^n(\BSL_2,\sW)\to H^n(BN,\sW)$ is an isomorphism for all $n>0$. For $n=0$, we have the split exact sequence
\[
0\to H^0(\BSL_2,\sW)\xrightarrow{p^*}H^0(BN,\sW)\to W(k)\to 0
\]
with the splitting given by the section $\<\bar{q}\>\in H^0(BN,\sW)$. Explicitly
\[
H^*(BN,\sW)=W(k)[p^*e]\oplus W(k)\cdot\<\bar{q}\>.
\]
2. $H^*(BN, \sW(\gamma))$ is a free $W(k)[p^*e]$-module with generator $j^*e(\sT)$.
\end{proposition}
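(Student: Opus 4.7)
My plan is to apply the Gysin (localization) sequence to the decomposition $X = Z \sqcup BN$, where $X = \P(\Sym^2F)\times^{\SL_2}E\SL_2$ and $Z = \P(F)\times^{\SL_2}E\SL_2$ is embedded via the squaring map $sq$, combined with Lemmas~\ref{lem:Witt1} and \ref{lem:Witt2}. First I will identify the equivariant normal bundle of $i:Z\to X$: since $C = sq(\P(F))$ is a smooth conic in $\P^2 = \P(\Sym^2F)$, the normal bundle pulls back to $O_{\P^1}(4)$. As $\SL_2$ has no nontrivial characters, $\Pic^{\SL_2}(\P^1) = \Z$, so $O_{\P^1}(4)$ is an equivariant square and the twist $\det N^{\pm 1}$ in the Thom isomorphism is trivial modulo squares. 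For part~2 the additional factor $sq^*p_1^*O_{\P^2}(1) = O_{\P^1}(2)$ is also an equivariant square, so in either case the coefficient sheaf on $Z$ is effectively $\sW$, and the proof of Lemma~\ref{lem:Witt1} applies (with trivially twisted coefficients) to give $H^n(Z,\sW(\text{twist})) = W(k)$ for $n=0$ and zero otherwise.

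For part~1, feeding these computations and Lemma~\ref{lem:Witt2}(1) into the Gysin sequence yields $p^*:H^n(\BSL_2,\sW)\xrightarrow{\sim} H^n(BN,\sW)$ for $n\ge 2$ and the exact sequence
\[
0\to H^0(\BSL_2,\sW)\xrightarrow{p^*} H^0(BN,\sW)\xrightarrow{\partial} W(k)\xrightarrow{i_*} H^1(\BSL_2,\sW)\xrightarrow{p^*} H^1(BN,\sW)\to 0.
\]
To finish I will show that $\partial\<\bar q\>$ generates $W(k)$. Since $Q = T_1^2 - 4T_0T_2$ vanishes to order one along $C$, the standard residue formula in the Gersten--Witt complex gives $\partial\<Q\> = \<\bar v\>$ for $v$ a local unit at the generic point of $C$ with $Q = v\pi$; an explicit local computation identifies $\<\bar v\>$ with a generator of $W(k)\subset W(k(C))$. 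Surjectivity of $\partial$ forces $i_* = 0$, produces the missing iso in degree one, and makes $\<\bar q\>$ a splitting in degree zero, yielding the stated direct-sum decomposition.

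For part~2, the same Gysin sequence with coefficients $\sW(p_1^*O_{\P^2}(1))$, together with Lemma~\ref{lem:Witt2}(2), produces isomorphisms $j^*:H^n(X,\sW(p_1^*O(1)))\xrightarrow{\sim} H^n(BN,\sW(\gamma))$ in the relevant range of degrees, transferring the free $H^*(\BSL_2,\sW)$-module structure with generator $e(\sT)$ in degree two to the free $W(k)[p^*e]$-module structure on $H^*(BN,\sW(\gamma))$ with generator $j^*e(\sT)$.

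The main technical obstacle is the residue computation $\partial\<\bar q\>$ in part~1: one must unwind the definition of $\<\bar q\>$ from the $\SL_2$-invariant section $\<Q\>\in H^0(\P^2\setminus C,\sO^\times/\sO^{\times 2})$, descend it across the $\times^{\SL_2}E\SL_2$ quotient, and then apply the local residue formula at the generic point of $C$ while being careful with the sign and normalization conventions for the Gersten--Witt residue.
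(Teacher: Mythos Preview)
Your proof is essentially the same as the paper's: both run the Gysin/localization sequence for the pair $(X,Z)$ using Lemmas~\ref{lem:Witt1} and~\ref{lem:Witt2}, and finish part~1 by checking that $\delta(\<\bar q\>)$ generates $W(k)$---the paper does this by restricting to a single fiber $\pi^{-1}(x)\cong\P^2$, which amounts to the same local residue computation you describe. (Minor note: you identify the normal bundle as pulling back to $O_{\P^1}(4)$ while the paper writes $p_1^*O_{\P(F)}(2)$; your computation is the correct one for the conic, but since both are equivariant squares the discrepancy is immaterial to the argument.)
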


\begin{proof} We recall Ananyevskiy's computation of $H^*(\BSL_2, \sW)$  (Theorem~\ref{thm:AnanBSLCoh}): 
\[
H^*(\BSL_2,\sW)=W(k)[e]
\]
where $e=e(E_2)$ is the Euler class of the tautological rank two vector bundle on $\BSL_2$.

Since $\Pic(\BSL_2)=\{1\}$, it follows that $\Pic(\P(\Sym^2F)\times^{\SL_2}E\SL_2)=\Z[\sO(1)]$, and thus the normal bundle of $i$ is the bundle $p_1^*O_{\P(F)}(2)$ on $\P(F)\times^{\SL_2}E\SL_2)$. Moreover,   $i^*p_1^*O_{\P(\Sym^2F)}(m)=p_1^*O_{\P(F)}(2m)$ for all $m$. This gives us the localization sequence
\begin{multline*}
\ldots\to H^{n-1}(\P(F)\times^{\SL_2}E\SL_2,\sW)\\\xrightarrow{i_*}  H^n(\P(\Sym^2F)\times^{\SL_2}E\SL_2,\sW(p_1^*O_{\P(\Sym^2F)}(m)))\\\xrightarrow{j^*} H^n(BN,\sW(\gamma^m)
\xrightarrow{\delta} H^n(\P(F)\times^{\SL_2}E\SL_2,\sW)\to\ldots 
\end{multline*}
The result for $n\ge 1$ and for $n=0$ and $m=1$ follows from this and the lemmas \ref{lem:Witt1} and \ref{lem:Witt2}. For $n=0$ and $m=0$, one can check by looking at a single fiber $\pi^{-1}(x)$ that $\delta(\<\bar{q}\>)$ is a  $W(k)$-generator for $H^0(\P(F)\times^{\SL_2}E\SL_2,\sW)=W(k)$. 

\end{proof}

To conclude, we compute the multiplicative structure for 
$H^*(BN,\sW)$ and the $H^*(BN,\sW)$-module structure for $H^*(BN,\sW(\gamma))$. As the class $\<\bar{q}\>\in H^0(BN,\sW)$ is represented by a 1-dimensional form, we have $\<\bar{q}\>^2=1$. It remains to compute $\<\bar{q}\>\cdot p^*e\in H^2(BN,\sW)$ and $\<\bar{q}\>\cdot j^*e(\sT)\in H^2(BN,\sW(\gamma))$.

For this, we consider the inclusion of the torus $\G_m\to N$ and the associated map of classifying spaces $B\G_m\to BN$, where we use the model $(\G_m\backslash \SL_2)\times^{\SL_2}E\SL_2$ for $B\G_m$. We recall that $BN=(\P(\Sym^2F)\setminus \P(F))\times^{\SL_2}E\SL_2$. The $\SL_2$-invariant section $Q$ of $\sO_{\P(\Sym^2F)}(2)$ gives us the the section $p_1^*Q$ of $p_1^*\sO(2)$. We claim that the corresponding double cover $\Spec_{\sO_{BN}}(\sqrt{p_1^*Q})\to BN$ is isomorphic to $B\G_m\to BN$.

To see this, we note the isomorphism of schemes 
\[
\G_m\backslash\SL_2\cong  
\P^1\times\P^1\setminus\Delta;\quad \begin{pmatrix}a&b\\c&d\end{pmatrix}\mapsto ([a:b], [c:d])
\]
which is an $\SL_2$-equivariant isomorphism for the right action of $\SL_2$
on $\P^1\times\P^1\setminus\Delta$ induced by the right action on $F$.
Since the cover $\P^1\times\P^1\setminus\Delta\to \P(\Sym^2F)\setminus \P(F)$ is isomorphic to $\Spec_{\sO_{\P(\Sym^2F)\setminus \P(F)}}(\sqrt{Q})$, equivariantly for the right $\SL_2$-actions, the assertion follows.

\begin{lemma}\label{lem:Relation} $(1+\<\bar{q}\>)\cdot p^*e=0$ in $H^2(BN,\sW)$ and 
$(1+\<\bar{q}\>)\cdot j^*e(\sT)=0$ in  $H^2(BN,\sW(\gamma)$. 
\end{lemma}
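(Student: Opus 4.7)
The strategy is to use the étale double cover $\pi\colon B\G_m\to BN$ described just above the lemma, and apply the projection formula for its trace map $\pi_*$. Since $\pi$ is finite étale (in particular proper), $\pi_*$ is defined on $\sW$-cohomology and satisfies $\pi_*(\pi^*\alpha)=\pi_*(1)\cdot\alpha$ for $\alpha\in H^*(BN,\sW(L))$ with any twist $L$. The plan has three parts: (a) identify $\pi_*(1)$ up to a unit with $1+\<\bar q\>$; (b) show that $\pi^*$ annihilates both $p^*e$ and $j^*e(\sT)$; (c) conclude by the projection formula.

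For (a), since $\pi$ is the spectrum of the quadratic étale algebra $\sO_{BN}[t]/(t^2-\bar q)$, the trace pairing in the basis $\{1,t\}$ is the form $\<2,2\bar q\>=\<2\>(1+\<\bar q\>)\in H^0(BN,\sW)$, and this computes $\pi_*(1)$. Since $\<2\>^2=\<4\>=1$, the element $\<2\>$ is a unit in $W(k)$, so multiplication by $\pi_*(1)$ annihilates the same elements as multiplication by $1+\<\bar q\>$.

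For (b), first consider $p^*e$: the composition $B\G_m\to BN\xrightarrow{p}\BSL_2$ classifies the restriction to the maximal torus $T=\G_m$ of the universal rank-two $\SL_2$-bundle, which decomposes as $L\oplus L^{-1}$ with $L$ the tautological character. By the Whitney sum formula for Euler classes, valid in $\eta$-invertible $\SL$-oriented theories as in \cite[Cor.~7.9]{Anan19}, $e(L\oplus L^{-1})=e(L)\cdot e(L^{-1})$, and each factor vanishes by Lemma~\ref{lem:PStab} applied to rank-one bundles. Next, for $j^*e(\sT)$: the character $\rho^-$ is trivial on $T$, so $\pi^*\gamma\cong\sO$ and the class lies in untwisted $H^2(B\G_m,\sW)$. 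Since the squaring map $\P^1\times\P^1\setminus\Delta\to\P^2\setminus C$ is étale, $\pi^*j^*\sT$ is canonically $T_{\P^1\times\P^1}$ restricted to the complement of the diagonal, which splits $\SL_2$-equivariantly as a direct sum of two line bundles (the pullbacks of $T_{\P^1}$ along the two projections). The same Whitney-plus-Lemma~\ref{lem:PStab} argument yields $e(\pi^*j^*\sT)=0$.

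For (c), the projection formula gives, for $\alpha\in\{p^*e,j^*e(\sT)\}$,
\[
\pi_*(1)\cdot\alpha=\pi_*(\pi^*\alpha)=\pi_*(0)=0,
\]
and multiplying by the unit $\<2\>$ yields $(1+\<\bar q\>)\cdot\alpha=0$. The main obstacle will be part (a): while it is morally clear that $\pi_*(1)$ is the trace form of a quadratic étale algebra, rigorously extracting this from the six-functor formalism for $\sW$ and tracking the normalizations of Thom classes in the $\SL$-oriented setting is the delicate point. A fallback is to reduce to a generic fiber over $\BSL_2$, where the transfer becomes the classical Scharlau trace for $k(\P^2\setminus C)[\sqrt Q]$ over $k(\P^2\setminus C)$ and can be computed by hand.
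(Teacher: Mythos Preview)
Your proof is correct and cleaner than the paper's. Both arguments hinge on the \'etale double cover $\pi\colon B\G_m\to BN$ and the identification of $\pi_*(1)$ with the trace form $\<2\>(1+\<\bar q\>)$; in fact the paper states this identification exactly as you do (``$\phi_*\<1\>$ is just the image in $H^0(BN,\sW)$ of the trace form of this double cover''), so your worry about part (a) is no more serious for you than for the paper. The divergence is in how the vanishing is established. You observe that $\pi^*p^*e=e(O(1)\oplus O(-1))$ and $\pi^*j^*e(\sT)=e(p_1^*O(2)\oplus p_2^*O(2))$ both vanish by the Whitney formula together with Lemma~\ref{lem:PStab}, and then apply the projection formula $\pi_*(\pi^*\alpha)=\pi_*(1)\cdot\alpha$; this handles both relations uniformly. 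The paper instead works directly in the Rost--Schmid complex for $\tilde E_2$: it builds an explicit class supported on the descended cone $\mathfrak C=O(1)\cup O(-1)$ (via the section $\<\partial/\partial s\>$), computes its boundary to be $\eta$, pushes forward to obtain $s_{0*}(\<2\>(1+\<\bar q\>))$ as an explicit Rost--Schmid coboundary, and deduces the first relation; the second relation is then derived algebraically from the first using the $W(k)[p^*e]$-module structure on $H^*(BN,\sW(\gamma))$.

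Your route is shorter and more conceptual; the paper's has the advantage of being self-contained at the level of the Rost--Schmid complex, avoiding any appeal to a general projection formula for $\sW$-pushforward along finite \'etale maps. One small remark: your citation of \cite[Cor.~7.9]{Anan19} is for the Pontryagin-class Whitney formula; the multiplicativity of (twisted) Euler classes you actually use follows directly from multiplicativity of Thom classes in any $\SL$-oriented theory, so you might adjust the reference.
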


\begin{proof} The relation $(1+\<\bar{q}\>)\cdot p^*e=0$ implies that
$(1+\<\bar{q}\>)\cdot j^*e(\sT)=0$. Indeed, since $H^*(BN,\sW(\gamma)$ is a free $W(k)[p^*e]$-module, there is a unique element $\lambda\in W(k)$ with $(1+\<\bar{q}\>)\cdot j^*e(\sT)=\lambda\cdot j^*e(\sT)$. Then $0=(1+\<\bar{q}\>)\cdot p^*e)\cdot j^*e(\sT)=\lambda\cdot (p^*e\cdot j^*e(\sT))$, and as $p^*e\cdot j^*e(\sT)$ is a generator for the free $W(k)$-module  $H^4(BN,\sW(\gamma)$,  $\lambda=0$. We now show that $(1+\<\bar{q}\>)\cdot p^*e=0$.

We have the tautological rank two bundle $E_2\to \BSL_2$, $E_2:=F\times^{\SL_2}E\SL_2$ and its pull-back $\tilde{E}_2\to BN$. The class $e\in H^2(\BSL_2,\sW)$ is the Euler class $e(E_2)$, so $p^*e=e(\tilde{E}_2)$. We have the cover $\phi:B\G_m\to BN$ with fiber $\G_m\backslash N=\Z/2$ corresponding to the inclusion $\G_m\to N$. The pullback $\phi^*\tilde{E}_2$ splits as a direct sum
\[
\phi^*\tilde{E}_2=O(1)\oplus O(-1)\xrightarrow{\pi}B\G_m
\]
corresponding to the decomposition of $F$ into the $t$ and $t^{-1}$ eigenspaces for the torus $\G_m\subset \SL_2$. The $\Z/2$-action on $B\G_m$ corresponds to the automorphism $t\mapsto t^{-1}$ of $\G_m$ and exchanges the two factors $O(1)$ and $O(-1)$. By descent, the union of the two ``coordinate axes'' $O(1)\times 0\cup 0\times O(-1)\subset \phi^*\tilde{E}_2$ gives us the cone $\mathfrak{C}\subset \tilde{E}_2$, which on each geometric fiber is the union of two lines through the origin. The normalization $\nu:\mathfrak{C}^N\to \mathfrak{C}$ is thus isomorphic to the total space of the bundle $O(1)$, that is, we have the commutative diagram
\[
\xymatrix{
\mathfrak{C}^N\ar[r]^\sim\ar[dr]_\nu&O(1)\ar[r]^{\tilde{i}}\ar[d]_{\tilde \phi}&\phi^*\tilde{E}_2\ar[r]\ar[d]_{\tilde{\phi}}&B\G_m\ar[d]^\phi\\
&\mathfrak{C}\ar[r]_i&\tilde{E}_2\ar[r]&BN
}
\]

Let $\sI_{\mathfrak{C}}$ denote the ideal sheaf of $\mathfrak{C}$ in $\tilde{E}_2$, and define $\sI_{O(1)}$ similarly. The normal bundle $N_{O(1)/\phi^*\tilde{E}_2}$ of the subscheme $O(1)$ in $\phi^*\tilde{E}_2$ is $\pi^*O(-1)_{|O(1)}$. The restriction of $\pi^*O(1)_{|O(1)}$ to $O(1)\setminus \{0_{O(1)}\}$ is trivialized by the tautological section $s$ of 
$\pi^*O(1)_{|O(1)}$ over $O(1)$, which induces a nowhere vanishing dual section $\partial/\partial s$ of the dual $N_{O(1)/\phi^*\tilde{E}_2}$ over $O(1)\setminus \{0_{O(1)}\}$. This gives us the section $\<\partial/\partial s\>$ of the twisted Grothendieck-Witt sheaf $\sGW(O(-1))$ over $O(1)\setminus \{0_{O(1)}\}$, as the class of the $O(-1)$-valued quadratic form $q_s$ with $q_s(1)=\partial/\partial s$. 

The boundary $\partial_{0_{\phi^*\tilde{E}_2}}(\<\partial/\partial s\>)$  lives in $H^0(0_{\phi^*\tilde{E}_2}, \sK^{MW}_{-1}(\det(\phi^*\tilde{E}_2))$; the canonical trivialization of $\det E_2$ sets this equal to $H^0(0_{\phi^*\tilde{E}_2}, \sK^{MW}_{-1})$. We claim that a calculation in local coordinates gives the identity
\[
\partial_{0_{\phi^*\tilde{E}_2}}(\<\partial/\partial s\>)=\eta\in H^0(0_{q^*\tilde{E}_2}, \sK^{MW}_{-1}).
\]
Indeed, take a local generator $\lambda$ for $O(1)$, giving the dual local generator $\lambda^\vee$ for $O(-1)=O(1)^\vee$. This gives a local  isomorphism of $\phi^*\tilde{E}_2$ with 
\[
\A^1\times \A^1\times B\G_m=\Spec k[x,y]\times_kB\G_m,
\]
 with $O(1)=\A^1\times 0\times B\G_m$, $O(-1)=0\times\A^1\times B\G_m$. This also identifies $N_{O(1)/\phi^*\tilde{E}_2}$ with the trivial bundle on $\A^1\times 0\times B\G_m$ and the section $\<\partial/\partial s\>$ goes over to $\<1\>\otimes x^{-1}=p_1^*\<x\>$ and the canonical generator of the normal bundle of $0_{O(1)}$ in $O(1)$ goes over to $\del/\del x$. On $\A^1=\Spec k[x]$, the element $\<x\>\in K^{MW}_0(k[x])$  has boundary 
\[
 \del_{\del/\del x}(\<x\>)=\del_{\del/\del x}(1+\eta[x])=\eta
 \]
 since $\del_{\del/\del x}([x])=1$.  Under the canonical identification $\sK^{MW}_{-1}\cong \sW$, $\eta$ goes over to $\<1\>$. We refer the reader to \cite[\S6]{MorelICTP} or \cite[\S6]{MorelNewton} for details on the Milnor-Witt sheaves and their relation to $\sGW$ and $\sW$.
 
Identifying $\mathfrak{C}\setminus\{0_{\tilde{E}_2}\}$ with $O(1)\setminus \{0_{O(1)}\}$ via $\phi$, we have the section $\phi_*(\<\partial/\partial s\>)$ of $\sW(N_{\mathfrak{C}\setminus\{0_{\tilde{E}_2}\}/\tilde{E}_2})$ over $\mathfrak{C}\setminus\{0_{\tilde{E}_2}\}$. This has boundary
\[
\del(\tilde{\phi}_*(\<\partial/\partial s\>))=\tilde{\phi}_*(\del(\<\partial/\partial s\>))=s_{0*}(\phi_*\<1\>)\in H^0(0_{\tilde{E}_2}, \sW)
\]
where $s_0:BN\to \tilde{E}_2$ is the 0-section. Since $\phi:B\G_m\to BN$ is the double cover $\Spec_{\sO_{BN}}(\sO_{BN}(\sqrt{p_1^*Q}))$, $\phi_*\<1\>$ is just the image in $H^0(BN, \sW)$ of the trace form of this double cover,  which gives us the element $\<2\>+\<2\bar{q}\>=\<2\>(1+\<\bar{q}\>)$.

Let $\tilde{E}_{2,m}$ denote the pull-back to $B_m\SL_2\subset \BSL_2$. We recall that $H^*(\tilde{E}_{2,m}, \sW)$ may be computed as the cohomology of the Rost-Schmid complex (see \cite[Chapter 5]{MorelA1Top})
\begin{multline*}
\sW(k(\tilde{E}_{2,m}))\to \oplus_{x\in \tilde{E}_{2,m}^{(1)}}\sW(k(x), \det N_{x/\tilde{E}_{2,m}})\\\to\oplus_{x\in \tilde{E}_{2,m}^{(2)}}\sW(k(x), \det N_{x/\tilde{E}_{2,m}})\to\ldots
\end{multline*}
To simplify the notation, we replace $\tilde{E}_{2,m}$ with $\tilde{E}_2$, with the understanding that all objects should be considered as  living on  $\tilde{E}_{2,m}$ for arbitrary $m$. 

We note that  $s_{0*}(1)\in H^2(\tilde{E}_2,\sW)$ is represented in the Rost-Schmid complex by the constant section $1$ on $0_{\tilde{E}_2}$. Computing  $H^2(\tilde{E}_2,\sW)$ via the Rost-Schmid complex, we see that 
\[
\del(\phi_*(\<\partial/\partial s\>))=\<2\>(1+\<\bar{q}\>) \in H^0(0_{\tilde{E}_2},  \sW) 
\]
goes to zero in $H^2(\tilde{E},\sW)$   As  $p^*e=e(\tilde{E}_2)=s_0^*s_{0*}(1)$, and as $s_0^*:H^2(\tilde{E}_2, \sW)\to H^2(BN, \sW)$ is an isomorphism, we see that  $\<2\>(1+\<\bar{q}\>)\cdot p^*e=0$ in $H^2(BN, \sW)$; since $\<2\>$ is a unit in $\sW$, it follows that $(1+\<\bar{q}\>)\cdot p^*e=0$.
\end{proof}

\begin{proposition} Let $W(k)[x_0, x_2]$ be the graded polynomial algebra over $W(k)$ on generators $x_0, x_2$, with $\deg x_i=i$. Then sending $x_0$ to $\<\bar{q}\>$ and $x_2$ to $p^*e$ defines  $W(k)$-algebra isomorphism
\[
\psi:W(k)[x_0, x_2]/(x_0^2-1, (1+x_0)x_2)\to H^*(BN, \sW).
\]
Moreover,  $H^*(BN, \sW(\gamma))$ is the quotient of the free $H^*(BN, \sW)$-module on the generator $j^*e(\sT)$ modulo the relation $(1+\<q\>)j^*e(\sT)=0$.
\end{proposition}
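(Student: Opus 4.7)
The map $\psi$ will be well-defined once we check that the relations hold in $H^*(BN,\sW)$. First, $\<\bar q\>^2 = \<\bar q^2\> = \<1\> = 1$ in $H^0(BN,\sW)$, since $\<\bar q\>$ is represented by a rank one form; this gives $\psi(x_0^2 - 1) = 0$. Second, $\psi((1+x_0)x_2) = (1+\<\bar q\>)\cdot p^*e$, which vanishes by Lemma \ref{lem:Relation}. Hence $\psi$ descends to a graded $W(k)$-algebra homomorphism out of the quotient.

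The plan is then to compare the two sides degree by degree as graded $W(k)$-modules. In the source $W(k)[x_0,x_2]/(x_0^2-1,(1+x_0)x_2)$, the first relation reduces any monomial to the form $x_2^n$ or $x_0 x_2^n$, while the second yields $x_0 x_2 = -x_2$ and hence $x_0 x_2^n = -x_2^n$ for every $n \ge 1$. So the source has a $W(k)$-basis consisting of $1$ and $x_0$ in degree $0$ together with $x_2^n$ in degree $2n$ for $n\ge 1$. By Proposition \ref{prop:WittCoh1}(1), $H^*(BN,\sW) = W(k)[p^*e]\oplus W(k)\cdot\<\bar q\>$ has a $W(k)$-basis consisting of $1$ and $\<\bar q\>$ in degree $0$ together with $(p^*e)^n$ in degree $2n$ for $n\ge 1$; under $\psi$ these bases correspond, so $\psi$ is a bijection in each degree, hence an isomorphism.

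For the module statement, Proposition \ref{prop:WittCoh1}(2) already identifies $H^*(BN,\sW(\gamma))$ with the free $W(k)[p^*e]$-module on $j^*e(\sT)$, and Lemma \ref{lem:Relation} gives the relation $(1+\<\bar q\>)\cdot j^*e(\sT) = 0$. Let $M$ denote the free $H^*(BN,\sW)$-module on a generator $m$ modulo $(1+\<\bar q\>)m = 0$. Using the presentation above, imposing $1+x_0 = 0$ on the coefficient ring kills the $\<\bar q\>$-summand and makes $(1+x_0)x_2 = 0$ automatic, so $M$ is identified with $W(k)[x_2]\cdot m$. The map $m \mapsto j^*e(\sT)$ therefore gives a surjective $H^*(BN,\sW)$-module map $M \to H^*(BN,\sW(\gamma))$ that is an isomorphism in every degree, completing the proof.

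Since all the geometric input (the additive cohomology calculation and the $(1+\<\bar q\>)$-relation) is already encoded in Proposition \ref{prop:WittCoh1} and Lemma \ref{lem:Relation}, there is no essential obstacle here; the only bookkeeping is to confirm that the additive $W(k)$-module structure of the presented quotient ring and module matches the one provided by Proposition \ref{prop:WittCoh1}, which is a short direct check on monomial bases.
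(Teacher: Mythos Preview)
Your proof is correct and follows the same approach as the paper's own proof: well-definedness from $\<\bar q\>^2=1$ and Lemma~\ref{lem:Relation}, then a $W(k)$-module comparison via Proposition~\ref{prop:WittCoh1}, and finally the module statement from the same two inputs. The paper condenses all of this to three sentences, while you (helpfully) spell out the monomial-basis bookkeeping in the quotient ring and the cyclic-module identification explicitly; there is no substantive difference in strategy.
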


\begin{proof} It follows from Lemma~\ref{lem:Relation} and the fact that $\<\bar{q}\>$ is  represented locally by a one-dimensional form  that  $\psi$ is a well-defined $W(k)$-algebra homomorphism. 
 It follows from Proposition~\ref{prop:WittCoh1} that $\psi$ is a $W(k)$-module isomorphism, hence a $W(k)$-algebra isomorphism. The description of $H^*(BN, \sW(\gamma))$ as an $H^*(BN, \sW)$-module then follows from Proposition~\ref{prop:WittCoh1} and Lemma~\ref{lem:Relation}.
\end{proof}

As $Q$ is a section of $\sO_{\P(\Sym^2F)}(2)$, $p_1^*Q$ has a well-defined value in $k/k^{\times 2}$ at each point of $BN(k)$. Here we use the definition of $BN$ as an Ind-scheme to define $BN(k)$.

\begin{lemma} \label{lem:ThomIso} Let $x\in BN(k)$ be a $k$-point such that $p_1^*Q(x)=-1\mod k^{\times 2}$, and let $\tilde{E}_{2x}$ denote the fiber of $\tilde{E}_2$ at $x$ with origin $0_x$.  Let 
\[
s_{0*}:H^0(BN, \sW)\to H^2_{0_{\tilde{E}_2}}(\tilde{E}_2,\sW)
\]
denote the Thom isomorphism.  Then the restriction map
\[
i^*_x:H^2_{0_{\tilde{E}_2}}(\tilde{E}_2,\sW)\to H^2_{0_x}(\tilde{E}_{2x},\sW).
\]
induces an isomorphism
\[
\bar{i}^*_x:H^2_{0_{\tilde{E}_2}}(\tilde{E}_2,\sW)/(s_{0*}(1+\<\bar{q}\>))\to H^2_{0_x}(\tilde{E}_{2x},\sW)
\]
\end{lemma}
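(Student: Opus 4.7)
The plan is to reduce the statement, via the Thom isomorphism, to a computation in $H^0(BN,\sW)$, using that $\tilde E_2$ is an $\SL_2$-bundle.

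First I observe that, since $\tilde E_2=p^*E_2$ arises by pullback from the universal $\SL_2$-bundle on $\BSL_2$, it carries a canonical trivialization $\det\tilde E_2\simeq\sO_{BN}$; restricting this to the fiber at $x$ yields the corresponding trivialization of $\det\tilde E_{2x}$. Naturality of the Thom isomorphism for $\SL$-oriented theories with respect to pullback along $x\colon\Spec k\to BN$ therefore produces a commutative square
\[
\xymatrix{
H^0(BN,\sW)\ar[r]^-{s_{0*}}\ar[d]_{x^*}&H^2_{0_{\tilde E_2}}(\tilde E_2,\sW)\ar[d]^{i_x^*}\\
W(k)\ar[r]_-{s_{0x*}}&H^2_{0_x}(\tilde E_{2x},\sW)
}
\]
in which both horizontal arrows are isomorphisms. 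Consequently, showing that $\bar i_x^*$ is well-defined and an isomorphism is equivalent to showing that $x^*$ induces a $W(k)$-module isomorphism $H^0(BN,\sW)/(1+\<\bar q\>)\xrightarrow{\sim}W(k)$.

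By Proposition~\ref{prop:WittCoh1}, $H^0(BN,\sW)=W(k)\oplus W(k)\cdot\<\bar q\>$. The class $\<\bar q\>$ was defined from the $\SL_2$-invariant section $Q$ of $\sO_{\P(\Sym^2F)}(2)$, so its pullback to a $k$-point $x$ of $BN$ is the rank-one form $\<p_1^*Q(x)\>\in W(k)$. The hypothesis $p_1^*Q(x)\equiv -1\pmod{k^{\times 2}}$ therefore gives $x^*\<\bar q\>=\<-1\>=-1$ in $W(k)$. Hence $x^*\colon W(k)\oplus W(k)\cdot\<\bar q\>\to W(k)$ sends $(a,b\<\bar q\>)$ to $a-b$; this map is surjective and has kernel $\{(a,a\<\bar q\>):a\in W(k)\}=W(k)\cdot(1+\<\bar q\>)$, exactly as required.

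The only subtle point is commutativity of the square above, which amounts to compatibility of Thom classes with pullback for $\SL$-oriented theories with respect to coherent determinant trivializations; this is immediate in our setting since both $\tilde E_2$ and its fiber $\tilde E_{2x}$ are genuine $\SL_2$-bundles and the trivialization of $\det\tilde E_2$ restricts to that of $\det\tilde E_{2x}$. The rest is just book-keeping with the computation of $H^*(BN,\sW)$ and the vanishing $1+\<-1\>=0$ in $W(k)$.
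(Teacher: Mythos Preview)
Your proof is correct and follows essentially the same approach as the paper's: both reduce the claim, via naturality of the Thom isomorphism under pullback along $x$, to the elementary statement that $x^*:H^0(BN,\sW)=W(k)\oplus W(k)\cdot\<\bar q\>\to W(k)$ sends $\<\bar q\>$ to $\<-1\>=-1$, and hence induces an isomorphism modulo $W(k)\cdot(1+\<\bar q\>)$. The paper phrases this by exhibiting the composition $\bar i_x^*\circ\bar s_{0*}$ (restricting $s_{0*}$ to the summand $W(k)$) as the fiberwise Thom isomorphism $s_{0x*}$, while you compute the kernel of $x^*$ directly; these are the same argument in different words.
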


\begin{proof} Since $p_1^*Q(x)=-1\mod k^{\times 2}$, it follows that $i_x^*(1+\<\bar{q}\>)=1+\<-1\>=0$ in $W(k)$. Thus 
\[
i_x^*(s_{0*}(1+\<\bar{q}\>))=0\in H^2_{0_x}(\tilde{E}_{2x},\sW),
\]
so we have a well-defined homomorphism
\[
\bar{i}^*_x:H^2_{0_{\tilde{E}_2}}(\tilde{E}_2,\sW)/(s_{0*}(1+\<\bar{q}\>))\to H^2_{0_x}(\tilde{E}_{2x},\sW)
\]
We have the decomposition
\[
H^0(BN, \sW)=W(k)\oplus W(k)\cdot\<\bar{q}\>
\]
and thus the restriction of the Thom isomorphism $s_{0*}:H^0(BN, \sW)\to 
H^2_{0_{\tilde{E}_2}}(\tilde{E}_2,\sW)$ to the factor $W(k)$ induces an isomorphism
\[
\bar{s}_{0*}:W(k)\to H^2_{0_{\tilde{E}_2}}(\tilde{E}_2,\sW)/(s_{0*}(1+\<\bar{q}\>)).
\]
As the composition $\bar{i}^*_x\circ \bar{s}_{0*}$ is just the Thom isomorphism
\[
s_{0x*}:W(k(x))=W(k)\to H^2_{0_x}(\tilde{E}_{2x},\sW)
\]
the result follows.
\end{proof}

\section{Rank two bundles on $BN_T(\SL_2)$}\label{sec:Rank2Bundles}

We pursue an analogy of $N:=N_T(\SL_2)$ with $SO(2)$. We let
\[
\iota:\G_m\to T(\SL_2)
\]
be the isomorphism
\[
\iota(t)=\begin{pmatrix}t&0\\0&t^{-1}\end{pmatrix}
\]
and set
\[
\sigma:=\begin{pmatrix}0&1\\-1&0\end{pmatrix}
\]
 
For $m\ge1$ an integer, define the representations $(F(m),\rho_m)$ of $N$ by  $F(m)=\A^2$ and
\begin{align*}
\rho_m\begin{pmatrix}t&0\\0&t^{-1}\end{pmatrix}&=\begin{pmatrix}t^m&0\\0&t^{-m}\end{pmatrix}\\
\rho_m(\sigma)&=\begin{pmatrix}0&1\\(-1)^m&0\end{pmatrix}
\end{align*}
We let $(\rho_0, F(0))$ denote  trivial  one-dimensional representation.

For $m\ge0$, we let $\rho_m^-:N\to \SL_2$ be the representation  given by  
\begin{align*}
\rho^-_m(\iota(t))&=\rho_m(\iota(t))\\
\rho^-_m(\sigma)&=(-1)\cdot \rho_m(\sigma)
\end{align*}
We let $p^{(m)}:\tilde{O}(m)\to BN$ denote the  rank two vector bundle
\[
\tilde{O}(m)=F(m)\times^NE\SL_2\to N\backslash E\SL_2=BN
\]
corresponding to $\rho_m$, with zero-section $s^{(m)}_0:BN\to \tilde{O}(m)$ and let $\th^{(m)}\in H^2_{0_{\tilde{O}(m)}}(\tilde{O}(m), \sW(\det^{-1}(\tilde{O}(m)))$ denote the Thom class $s^{(m)}_{0*}(1)$. We define the bundle $p^{(m)-}:\tilde{O}^-(m)\to BN$ replacing $\rho_m$ with $\rho_m^-$. We note that $\Pic(BN)\cong \Z/2$, with generator $\gamma:=\tilde{O}^{-}(0)$ and that we have canonical isomorphisms $\det\tilde{O}^\pm(m)\cong O_{BN}$ for $m$ odd and $\det\tilde{O}^\pm(m)\cong \gamma$ for $m>0$ even.

Our goal in this section is to compute the Euler classes $e(\tilde{O}^\pm(m))\in H^2(BN,\sW(\det^{-1}(\tilde{O}^\pm(m))))$.

\begin{lemma}\label{lem:Ident} $\tilde{O}(2)$ is isomorphic to $j^*\sT$ and therefore $e(\tilde{O}(2))=j^*e(\sT)$.
\end{lemma}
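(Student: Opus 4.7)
The plan is to identify both $\tilde{O}(2)$ and $j^*\sT$ as the vector bundle on $BN$ associated via the mixing construction $-\times^N E\SL_2$ to the same $N$-representation $F(2)$. The key device is the standard equivalence between $\SL_2$-equivariant vector bundles on the homogeneous space $N\backslash \SL_2$ and $N$-representations, given by restriction to the fiber over a distinguished base-point; combined with the compatibility $(V_{v_0}\times^N\SL_2)\times^{\SL_2}E\SL_2\cong V_{v_0}\times^N E\SL_2$, this reduces the lemma to an explicit comparison of tangent spaces.

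First I would pinpoint the $N$-fixed point $v_0:=[e_1 e_2]\in \P(\Sym^2F)$: the vector $e_1 e_2$ has weight zero for $T$ and satisfies $\sigma\cdot (e_1 e_2)=-e_1 e_2$, so $v_0$ is fixed by $N$ projectively. Its $\SL_2$-orbit is $\P(\Sym^2F)\setminus sq(\P(F))$, since $sq(\P(F))$ parametrizes the squares of linear forms (the singular binary quadratics), whereas $v_0=e_1\cdot e_2$ is a product of two distinct linear forms. This gives an $\SL_2$-equivariant isomorphism $N\backslash \SL_2\xrightarrow{\sim} \P(\Sym^2F)\setminus sq(\P(F))$ sending $Ng\mapsto v_0\cdot g$, and after applying $-\times^{\SL_2}E\SL_2$ it identifies $j$ with the open immersion induced by the orbit inclusion.

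Next, since $\sT=T_{\P(\Sym^2F)/k}\times^{\SL_2}E\SL_2$, the pullback $j^*\sT$ is the bundle on $BN$ associated to the $N$-representation $V_0:=T_{v_0}\P(\Sym^2F)$. Writing $L=\langle e_1 e_2\rangle\subset \Sym^2F$, the Euler sequence gives $V_0=L^\vee\otimes(\Sym^2F/L)$. A direct check shows that $T$ acts with weight $+2$ on $[e_1^2]$ and weight $-2$ on $[e_2^2]$ and trivially on $L^\vee$, matching $\rho_2(\iota(t))=\mathrm{diag}(t^2,t^{-2})$; meanwhile $\sigma$ swaps $[e_1^2]$ and $[e_2^2]$ and acts by $-1$ on $L^\vee$, and rescaling one basis vector of $V_0$ to absorb the resulting sign puts the matrix of $\sigma$ into the form $\bigl(\begin{smallmatrix}0&1\\1&0\end{smallmatrix}\bigr)=\rho_2(\sigma)$. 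Hence $V_0\cong F(2)$ as $N$-representations, giving $j^*\sT\cong F(2)\times^N E\SL_2=\tilde{O}(2)$. The equality $e(\tilde{O}(2))=j^*e(\sT)$ then follows by naturality of the Euler class.

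The only real nuisance is the sign-bookkeeping for the $N$-action on $V_0$; once conventions for the left/right actions and the tensor factor $L^\vee$ are pinned down, the identification with $F(2)$ is immediate.
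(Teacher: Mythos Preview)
Your argument is correct and is a genuinely different route from the paper's. You compute the $N$-representation underlying $j^*\sT$ directly: you pick the $N$-fixed point $v_0=[e_1e_2]\in\P(\Sym^2F)$, use the Euler sequence to identify the fiber $T_{v_0}\P(\Sym^2F)\cong L^\vee\otimes(\Sym^2F/L)$, and then read off the action of $\iota(t)$ and $\sigma$ in an explicit basis to recognize $F(2)$. The sign bookkeeping you flag is the only delicate point, and your basis change (negating one basis vector) indeed converts the $\sigma$-matrix $\bigl(\begin{smallmatrix}0&-1\\-1&0\end{smallmatrix}\bigr)$ into $\rho_2(\sigma)=\bigl(\begin{smallmatrix}0&1\\1&0\end{smallmatrix}\bigr)$ while preserving the torus weights.

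The paper instead passes through the \'etale double cover $\pi\colon\P^1\times\P^1\setminus\Delta\to\P^2\setminus C$: since $\pi$ is \'etale, $\pi^*T_{\P^2\setminus C}\cong T_{\P^1\times\P^1\setminus\Delta}\cong p_1^*O_{\P^1}(2)\oplus p_2^*O_{\P^1}(2)$, and one then checks that the $T(\SL_2)$-action and the $\sigma$-action on this splitting agree with $\rho_2$. In effect the paper first computes the underlying $T$-representation via the cover and then verifies descent along $\sigma$, whereas you work at the $N$-fixed point from the start. Your method is more self-contained; the paper's has the advantage of tying in with the cover $B\G_m\to BN$ used repeatedly in the surrounding sections.
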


\begin{proof} The quotient map $\pi:\P^1\times\P^1\setminus\Delta\to \P^2\setminus C$ is \'etale and we have $\pi^*T_{\P^2\setminus C}\cong T_{\P^1\times\P^1\setminus\Delta}\cong p_1^*T_{\P^1}\oplus p_2^*T_{\P^1}\cong 
p_1^*O_{\P^1}(2)\oplus p_2^*O_{\P^1}(2)$. Under the inclusion $T(\SL_2)\to T(\GL_2)$, the restriction of $\rho_2$ to $T(\SL_2)$ is the restriction of the two-dimensional representation of 
$T(\GL_2)=\G_m\times\G_m$ defining $p_1^*O_{\P^1}(2)\oplus p_2^*O_{\P^1}(2)$, and the action of $\sigma$ on $\pi^*T_{\P^2\setminus C}=p_1^*O_{\P^1}(2)\oplus p_2^*O_{\P^1}(2)$ is via $\rho_2(\sigma)$.
\end{proof}

Let $R$ be a $k$-algebra, essentially of finite type over $k$. We consider an $R$-valued point $x$ of $B_n\SL_2$ coming from an $R$-valued point of $E_n\SL_2$, namely, two elements $v_1, v_2\in \A^{n+2}(R)$ such that the resulting matrix $\begin{pmatrix}v_1\\v_2\end{pmatrix}$ has rank two at each maximal ideal of $R$. This gives us the corresponding $R$-valued point $\bar{x}$ of the Grassmannian $\Gr(2,n)$,
\[
\bar{x}: \Spec R\to \Gr(2,n+2)
\]
together with the isomorphism of $R$-modules $R\cong \det\bar{x}^*E_2$ sending $1\in R$ to $v_1\wedge v_2$. 

We have the $m$th-power map
\[
\psi_{m,a}:F(a)\to F(ma);\ \psi_{m,a}(x,y):=(x^m, y^m),
\]
which is a (non-linear!) $N$-equivariant morphism. This gives us the flat and finite map of $BN$-schemes
\[
\psi_{m,a}:\tilde{O}(a)\to \tilde{O}(ma).
\]

For $V\to X$ a rank $r$ vector bundle with trivialized determinant $\theta:\det V\to O_X$ and 0-section $s_0:X\to V$, and  we write $s_{0*}:\sE^{a,b}(X)\to \sE^{a+2r,b+r}_{0_V}(V;\det^{-1}V)$ for the Thom isomorphism.

\begin{lemma} For $m$ odd, $e(\tilde{O}(m))=s_0^{(1)*}(\psi_{m,1}^*(\th^{(m)}))$ and for $m$ even, 
$e(\tilde{O}(m))=s_0^{(2)*}(\psi_{m/2,2}^*(\th^{(m)}))$.
\end{lemma}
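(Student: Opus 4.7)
The plan is to derive both formulas from a single formal computation. The point is that the power map $\psi_{k,a}:\tilde{O}(a)\to \tilde{O}(ka)$ preserves zero sections, so pulling the Thom class $\th^{(ka)}$ back along $\psi_{k,a}$ and then restricting to the zero section of $\tilde{O}(a)$ reproduces the Euler class of $\tilde{O}(ka)$. The split into parity cases in the statement is just a choice of source bundle whose determinant line matches that of $\tilde{O}(m)$: for $m$ odd one uses $\tilde{O}(1)$ (trivial determinant) and for $m$ even one uses $\tilde{O}(2)$ (determinant $\gamma$), with the power $k$ chosen so that $ka=m$.

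First I would verify the commutative diagram
\[
\xymatrix{
BN\ar@{=}[d]\ar[r]^-{s_0^{(a)}}& \tilde{O}(a)\ar[d]^{\psi_{k,a}}\\
BN\ar[r]^-{s_0^{(ka)}}&\tilde{O}(ka).
}
\]
Since $\psi_{k,a}$ is by construction a morphism of $BN$-schemes, it is fiberwise given by the map $(x,y)\mapsto (x^k,y^k)$ from $F(a)$ to $F(ka)$; this map sends $(0,0)$ to $(0,0)$, so $\psi_{k,a}\circ s_0^{(a)} = s_0^{(ka)}$. A small caveat: the scheme-theoretic preimage $\psi_{k,a}^{-1}(0_{\tilde{O}(ka)})$ is cut out by $(x^k,y^k)$ and is non-reduced for $k>1$; however, cohomology with support is insensitive to nilpotents, so $\psi_{k,a}^*$ still gives a well-defined homomorphism $H^2_{0_{\tilde{O}(ka)}}(\tilde{O}(ka),-)\to H^2_{0_{\tilde{O}(a)}}(\tilde{O}(a),-)$.

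The twisted coefficients match up automatically: the identity $p^{(ka)}\circ\psi_{k,a}=p^{(a)}$ identifies $\psi_{k,a}^*(p^{(ka)*}\det^{-1}\tilde{O}(ka))$ with $p^{(a)*}\det^{-1}\tilde{O}(ka)$ on $\tilde{O}(a)$, which in turn pulls back along $s_0^{(a)}$ to $\det^{-1}\tilde{O}(ka)$ on $BN$, the natural home of $e(\tilde{O}(ka))$. Applying $s_0^{(a)*}\psi_{k,a}^*$ to $\th^{(ka)}=s^{(ka)}_{0*}(1)$ and using the commutative diagram above gives
\[
s_0^{(a)*}\psi_{k,a}^*\th^{(ka)}=(\psi_{k,a}\circ s_0^{(a)})^*\th^{(ka)}=s_0^{(ka)*}\th^{(ka)}=e(\tilde{O}(ka)),
\]
the last equality being the definition of the Euler class. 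Specializing $(k,a)=(m,1)$ for $m$ odd and $(k,a)=(m/2,2)$ for $m$ even (so that $ka=m$ in both cases) yields the two displayed formulas. There is no real obstacle here: the content of the lemma is purely a matter of unwinding definitions, once the zero-section compatibility of $\psi_{k,a}$ is observed and the bookkeeping with the determinant twist is checked.
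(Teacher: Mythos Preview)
Your proof is correct and follows essentially the same approach as the paper: both arguments rest on the single observation that $\psi_{k,a}\circ s_0^{(a)}=s_0^{(ka)}$, after which the Euler class identity $e(\tilde{O}(ka))=s_0^{(ka)*}\th^{(ka)}=s_0^{(a)*}\psi_{k,a}^*\th^{(ka)}$ is immediate. Your version is simply more explicit about the determinant-twist bookkeeping and the support issue, while the paper's proof is a two-line application of the same factorization.
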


\begin{proof} For $m$ odd, we have $s_0^{(m)}=\psi_{m,1}\circ s_0^{(1)}$, hence
\begin{align*}
e(\tilde{O}(m))&=s_0^{(m)*}(\th^{(m)})\\
&=s_0^{(1)*}(\psi_{m,1}^*(\th^{(m)})).
\end{align*}
The proof for $m$ even is the same.
\end{proof}

Via the isomorphisms
\[
\xymatrix{
W(k)\oplus W(k)\cdot \<\bar{q}\>=H^0(BN,\sW)\ar[r]_-\sim^-{s_{0*}^{(1)}}& H^2_{0_{\tilde{O}(1)}}(\tilde{O}(1),\sW)\\
W(k)\oplus W(k)\cdot \<\bar{q}\>=H^0(BN,\sW)\ar[r]_-\sim^-{s_{0*}^{(2)}}& H^2_{0_{\tilde{O}(2)}}(\tilde{O}(2),\sW(\gamma))
}
\]
we write, for $m$ odd, 
\[
\psi_{m,1}^*(\th^{(m)})=s_{0*}^{(1)}(a_m+b_m\cdot \<\bar{q}\>)\in H^2_{0_{\tilde{O}(1)}}(\tilde{O}(1),\sW),
\]
and for $m$ even
\[
\psi_{m/2,2}^*(\th^{(m)})=s_{0*}^{(2)}(a_m+b_m\cdot \<\bar{q}\>)\in H^2_{0_{\tilde{O}(2)}}(\tilde{O}(2),\sW(\gamma)),
\]
uniquely defining the elements $a_m, b_m\in W(k)$. 

\begin{lemma}\label{lem:reduction1} For $m$ odd, $e(\tilde{O}(m))=(a_m-b_m)p^*e\in H^2(BN, \sW)$ and for $m$ even, $e(\tilde{O}(m))=(a_m-b_m)e(\tilde{O}(2)) \in H^2(BN, \sW(\gamma))$. 
\end{lemma}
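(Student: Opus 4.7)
The plan is to apply the self-intersection formula for a zero section to the expression for $e(\tilde O(m))$ given in the preceding lemma, and then collapse the resulting product using the $(1+\<\bar q\>)$-relations from Lemma~\ref{lem:Relation}.

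Concretely, for $m$ odd I would substitute the defining identity $\psi_{m,1}^*(\th^{(m)}) = s_{0*}^{(1)}(a_m + b_m\<\bar q\>)$ into the expression $e(\tilde O(m)) = s_0^{(1)*}(\psi_{m,1}^*(\th^{(m)}))$ from the preceding lemma. Combined with the standard self-intersection identity $s_0^{(1)*}\circ s_{0*}^{(1)}(\xi) = e(\tilde O(1))\cdot \xi$, valid because $\det\tilde O(1)$ is canonically trivial so that both sides live in $H^2(BN,\sW)$, this rewrites $e(\tilde O(m))$ as
\[
e(\tilde O(1))\cdot(a_m + b_m\<\bar q\>).
\]
Since $\tilde O(1) = F(1)\times^N E\SL_2$ is the restriction of the tautological bundle $E_2\to\BSL_2$ along $p\colon BN\to \BSL_2$, we have $e(\tilde O(1)) = p^*e$. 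Finally, Lemma~\ref{lem:Relation} gives $\<\bar q\>\cdot p^*e = -p^*e$, and the expression collapses to $(a_m - b_m)p^*e$, as claimed.

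The case $m$ even is completely parallel, with $\tilde O(2)$ in place of $\tilde O(1)$: the same self-intersection formula applied to the rank two bundle $\tilde O(2)\to BN$ (now with $\det\tilde O(2)\cong \gamma$, so the target is $H^2(BN,\sW(\gamma))$) yields
\[
e(\tilde O(m)) = e(\tilde O(2))\cdot(a_m + b_m\<\bar q\>);
\]
then Lemma~\ref{lem:Ident} identifies $e(\tilde O(2))$ with $j^*e(\sT)$, and the second relation of Lemma~\ref{lem:Relation}, $(1+\<\bar q\>)\cdot j^*e(\sT) = 0$, gives $\<\bar q\>\cdot e(\tilde O(2)) = -e(\tilde O(2))$, yielding $(a_m - b_m)\cdot e(\tilde O(2))$. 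I do not expect any substantive obstacle; the only minor care is in tracking the twists (trivial for $m$ odd, by $\gamma$ for $m$ even), and once the self-intersection formula is in place the result is immediate from Lemma~\ref{lem:Relation}.
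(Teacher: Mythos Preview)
Your proposal is correct and follows essentially the same argument as the paper: both compute $s_0^{(1)*}\circ s_{0*}^{(1)}$ as multiplication by $e(\tilde O(1))=p^*e$, then invoke the relation $(1+\<\bar q\>)\cdot p^*e=0$ from Lemma~\ref{lem:Relation} to collapse $a_m+b_m\<\bar q\>$ to $a_m-b_m$ (and similarly for $m$ even with $\tilde O(2)$ and the $\gamma$-twist).
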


\begin{proof} For $m$ odd, the composition
\begin{multline*}
\xymatrix{
W(k)\oplus W(k)\cdot \<\bar{q}\>=H^0(BN,\sW)\ar[r]_-\sim^-{s_{0*}^{(1)}}& H^2_{0_{\tilde{O}(1)}}(\tilde{O}(1),\sW)}\\
\xrightarrow{s_{0}^{(1)*}}
H^2(BN, \sW)=p^*e\cdot W(k)
\end{multline*}
is multiplication with $e(\tilde{O}(1))=p^*e$. Since $(1+\<\bar{q}\>)p^*e=0$, we see that 
$a_m+b_m\cdot \<\bar{q}\>$ gets sent to $(a_m-b_m)p^*e$.

Replacing $e(\tilde{O}(1))$ with $e(\tilde{O}(2))$ and using $\gamma$-twisted cohomology, the proof for $m$ even is the same, using the relation $(1+\<\bar{q}\>)\cdot j^*e(\sT)=0$ and
Lemma~\ref{lem:Ident}.
\end{proof}

Choose a point $x\in BN(k)$ with $p_1^*Q(x)=-1\mod k^{\times 2}$. Write  $\tilde{O}(m)_x$ for the fiber of $\tilde{O}(m)$ at $x$ and let
\[
i_x^{(m)}:\tilde{O}(m)_x\to \tilde{O}(m)
\]
denote the inclusion. Take $m$ odd and let
\[
\psi_{m,1}(x):\tilde{O}(1)_x\to \tilde{O}(m)_x
\]
the restriction of $\psi_{m,1}$.  The map
\[
s^{(m)}_{0x}:x\to \tilde{O}(m)_x
\]
induces an isomorphism
\[
s^{(m)}_{0x*}:W(k)\to H^2_{0_x}(\tilde{O}(m)_x,\sW)
\]
We write $\th^{(m)}_x:=s^{(m)}_{0x*}(1)$.

For $m$ even, we have $\psi_{m/2,2}(x):\tilde{O}(2)_x\to \tilde{O}(m)_x$ and $s^{(m)}_{0x}$ induces an isomorphism
\[
s^{(m)}_{0x*}:W(k)\to H^2_{0_x}(\tilde{O}(m)_x,\sW(\gamma_x)).
\]

\begin{lemma}\label{lem:reduction2} For $m$ odd, $s^{(1)}_{0x*}(a_m-b_m)= \psi_{m,1}(x)^*(\th^{(m)}_x)= \psi_{m,1}(x)^*(s^{(m)}_{0x*}(1))$ and for $m$ even, $s^{(2)}_{0x*}(a_m-b_m)= \psi_{m/2,2}(x)^*(\th^{(m)}_x)= \psi_{m/2.2}(x)^*(s^{(m)}_{0x*}(1))$. 
\end{lemma}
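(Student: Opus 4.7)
The plan is to restrict the defining identities for $a_m$ and $b_m$ to the fiber at $x\in BN(k)$, using naturality of the Thom isomorphism and of the power maps $\psi_{m,a}$ under base change.

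First, I would observe that since $\<\bar{q}\>$ is the image in $\sGW$ of the section $p_1^*Q$ of $\sO^\times/\sO^{\times 2}$ on $BN$, the hypothesis $p_1^*Q(x)=-1\bmod k^{\times 2}$ gives $i_x^*\<\bar{q}\>=\<-1\>=-1$ in $W(k)$; hence $i_x^*(a_m+b_m\<\bar{q}\>)=a_m-b_m$ in $W(k)$.

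Second, I would use naturality under base change. The Cartesian squares
\[
\xymatrix{
x\ar[r]^{s^{(a)}_{0x}}\ar[d]&\tilde{O}(a)_x\ar[d]^{i_x^{(a)}}\\
BN\ar[r]_{s_0^{(a)}}&\tilde{O}(a)
}\qquad
\xymatrix{
\tilde{O}(a)_x\ar[r]^{\psi_{m,a}(x)}\ar[d]_{i_x^{(a)}}&\tilde{O}(ma)_x\ar[d]^{i_x^{(ma)}}\\
\tilde{O}(a)\ar[r]_{\psi_{m,a}}&\tilde{O}(ma)
}
\]
yield the identities
\[
i_x^{(a)*}\circ s^{(a)}_{0*}=s^{(a)}_{0x*}\circ i_x^*, \qquad i_x^{(a)*}\circ\psi_{m,a}^*=\psi_{m,a}(x)^*\circ i_x^{(ma)*}.
\]
Applying $i_x^{(1)*}$ to $\psi_{m,1}^*\th^{(m)}=s^{(1)}_{0*}(a_m+b_m\<\bar{q}\>)$ in the odd case, and $i_x^{(2)*}$ to $\psi_{m/2,2}^*\th^{(m)}=s^{(2)}_{0*}(a_m+b_m\<\bar{q}\>)$ in the even case, and using that $i_x^{(m)*}\th^{(m)}=\th^{(m)}_x=s^{(m)}_{0x*}(1)$ (again by naturality of the Thom class), gives the claimed formulas.

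The main subtlety is the base-change compatibility of the Thom pushforward in (twisted) Witt cohomology. In the even case one must additionally check that the canonical trivialization $\det\tilde{O}(m)\cong\gamma$ restricts at $x$ to the chosen trivialization on the fiber $\det\tilde{O}(m)_x\cong\gamma_x$; this is immediate from the $N$-equivariant construction of both bundles, so the rest of the argument is bookkeeping.
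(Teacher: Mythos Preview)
Your proposal is correct and follows essentially the same approach as the paper: both arguments use the two base-change squares relating the zero-section and the power map on the fiber to those on $BN$, invoke naturality of the Thom isomorphism to get $i_x^{(a)*}\circ s^{(a)}_{0*}=s^{(a)}_{0x*}\circ i_x^*$, and reduce to the observation that $i_x^*\<\bar{q}\>=\<-1\>=-1$ in $W(k)$. The paper treats the odd case in detail and remarks that the even case is the same; your added note about compatibility of the trivialization $\det\tilde{O}(m)\cong\gamma$ at $x$ is a reasonable bit of care but does not represent a different method.
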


\begin{proof} For $m$ odd, the second identity is just the definition of $\th^{(m)}_x$. We have the commutative diagrams
\[
\xymatrix{
\tilde{O}(1)_x\ar[r]^-{\psi_{m,1}(x)} \ar[d]_{i^{(1)}_x}&\tilde{O}(m)_x \ar[d]^{i^{(m)}_x}\\
\tilde{O}(1)\ar[r]_{\psi_{m,1}}&\tilde{O}(m)
}
\]
and
\[
\xymatrix{
\Spec k\ar[d]_{s_{0x}}\ar[r]^x&BN\ar[d]^{s_0^{(1)}}\\
\tilde{O}(1)_x\ar[r]^{i_x}&\tilde{O}(1)
}
\]
The map $x^*:H^0(BN,\sW)\to W(k)$ is the identity on the summand $W(k)$ of 
$H^0(BN,\sW)$ and is the map $b\cdot\<\bar{q}\>\mapsto -b$ on the summand $W(k)\cdot\<\bar{q}\>$, because $x^*(\<\bar{q}\>)=\<-1\>=-1\in W(k)$. 

Via the respective Thom isomorphisms
\[
s_{0x*}^{(1)}:W(k)\to  H^2_{0_x}(\tilde{O}(1)_x,\sW)
\]
and
\[
s_{0*}^{(1)}:H^0(BN,\sW)\to H^2_{0_{\tilde{O}(1)}}H^2_{0_x}(\tilde{O}(1),\sW)
\]
we see that 
\begin{align*}
 \psi_{m,1}(x)^*(\th^{(m)}_x)&=i_x^{(1)*}(\psi_{m,1}^*(\th^{(m)}))\\
&=i_x^{(1)*}(s_{0*}^{(1)}(a_m+b_m\cdot \<\bar{q}\>))\\
&=s_{0x*}^{(1)}(x^*((a_m+b_m\cdot \<\bar{q}\>)))\\
&=s_{0x*}^{(1)}(a_m-b_m)
\end{align*}

The proof for $m$ even is the same.
\end{proof}

Take $m$ odd. Fixing the point $x\in BN$ as above, we note that the $k$-vector spaces $\tilde{O}(1)_x$ and $\tilde{O}(m)_x$ are both isomorphic to $F=\A^2_k$. We have canonical trivializations $\eta_m:\det \tilde{O}(m)_x\to k$, $\eta_1:\det \tilde{O}(1)_x\to k$. We normalize the  trivializations $\alpha_1:\tilde{O}(1)_x\to \A^2_k$, $\alpha_m:\tilde{O}(m)_x\to \A^2_k$ by requiring that 
$\det\alpha_1\eta_1^{-1}=\det\alpha_m\eta_m^{-1}$ in $k^\times/k^{\times 2}$; let $\lambda\in 
k^\times/k^{\times 2}$ denote this common value.  This gives us the commutative diagram
\[
\xymatrix{
H^2_0(\A^2,\sW)\ar[r]^-{\alpha_m^*}&H^2_{0_x}(\tilde{O}(m)_x,\sW)\\
W(k)\ar[u]^{s_{0*}}\ar[r]_{\times\<\lambda\>}&W(k)\ar[u]_{s_{0_x*}}
}
\]
and similarly for $m=1$. 

Via these trivializations, we may therefore consider $\psi_{m,1}(x)$ as a self-map
\[
\psi_{m,1}(x):\A^2_k\to \A^2_k
\]
inducing the map on the Thom spaces $\A^2/(\A^2\setminus\{0\})$
\[
\th(\psi_{m,1}(x)):\A^2/(\A^2\setminus\{0\})\to \A^2/(\A^2\setminus\{0\}).
\]
Passing to $\SH(k)$, $\th(\psi_{m,1}(x))$ gives an endomorphism of the sphere spectrum, 
\[
\Sigma^\infty_T\th(\psi_m(x))\in \End_{\SH(k)}(\mS_k)=\GW(k).
\]
Let $\gamma_m\in W(k)$ be the image of $\Sigma^\infty_T\th(\psi_m(x))$ in $W(k)$, via Morel's isomorphism $\End_{\SH(k)}(\mS_k)\cong \GW(k)$ and the surjection $\GW(k)\to W(k)$.  It follows from lemmas~\ref{lem:reduction1}, \ref{lem:reduction2} that
\begin{equation}\label{eqn:mainOdd}
e(\tilde{O}(m))=\gamma_m\cdot p_1^*e.
\end{equation}

For $m$ even, we have a canonical isomorphism $\eta_m:\det \tilde{O}(m)_x\to \gamma_x$. We take trivializations   $\alpha_2:\tilde{O}(2)_x\to \A^2_k$, $\alpha_m:\tilde{O}(m)_x\to \A^2_k$ such that the diagram
\begin{equation}\label{eqn:Triv}
\xymatrix{
\det \tilde{O}(2)_x\ar[rr]^{\det\alpha_m^{-1}\circ \det \alpha_2}\ar[dr]_{\eta_2}&&\det \tilde{O}(m)_x
\ar[dl]^{\eta_m}\\
&\gamma_x
}
\end{equation}
commutes up to multiplication by an element in $k^{\times 2}$.  Let $\gamma_m\in W(k)$ be the image of $\Sigma^\infty_T\th(\psi_{m/2,2}(x))$ in $W(k)$. Using the fact that $e(\tilde{O}(2))$ is a $W(k)$-generator for $H^2(BN, \sW(\gamma))$ (Proposition~\ref{prop:WittCoh1}  and Lemma~\ref{lem:Ident}), it follows as for odd $m$  that
\begin{equation}\label{eqn:mainEven}
e(\tilde{O}(m))=\gamma_m\cdot e(\tilde{O}(2)).
\end{equation}

For $m$ odd, we can just as well compute $\gamma_m$  as the endomorphism of $\mS_k$ induced by the $\P^1$-desuspension of $\th(\psi_{m,1}(x))$  (noting that $\psi_m(x)$ is homogeneous)
\[
\P(\psi_{m,1}(x)):\P^1_k\to \P^1_k
\]
For $m$ even, we do the same, using $\P(\psi_{m/2,2}(x)):\P^1_k\to \P^1_k$ to compute $\gamma_m$ 

We summarize the discussion as follows.
\begin{lemma}\label{lem:Upshot}  Let  $x$ be a $k$-point of $BN$ such that $x^*(\<\bar{q}\>)=\<-1\>=-1\in W(k)$. For $m$ odd,  suppose we have chosen isomorphisms  $\alpha_1:\tilde{O}(1)_x\to \A^2_k$, $\alpha_m:\tilde{O}(m)_x\to \A^2_k$ such $\det\alpha_1\eta_1^{-1}=\det\alpha_m\eta_m^{-1}$ in $k^\times/k^{\times 2}$.  Let $\gamma_m\in W(k)$ be the image of $[\P(\psi_{m,1}(x))]\in \End_{\SH(k)}(\mS_k)=\GW(k)$. Then 
$e(\tilde{O}(m))=\gamma_m\cdot p_1^*e$ in $H^2(BN, \sW)$. 

For $m$ even,  suppose we have chosen isomorphisms  $\alpha_2:\tilde{O}(2)_x\to \A^2_k$, $\alpha_m:\tilde{O}(m)_x\to \A^2_k$ such that the diagram \eqref{eqn:Triv} commutes up to multiplication by an element in $k^{\times 2}$.   Let $\gamma_m\in W(k)$ be the image of $[\P(\psi_{m/2,2}(x))]\in \End_{\SH(k)}(\mS_k)=\GW(k)$. Then 
$e(\tilde{O}(m))=\gamma_m\cdot e(\tilde{O}(2))$ in $H^2(BN, \sW(\gamma))$.
\end{lemma}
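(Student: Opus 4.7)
The plan is to observe that the lemma is essentially a repackaging of the identities established in equations~\eqref{eqn:mainOdd} and \eqref{eqn:mainEven}; those already give $e(\tilde O(m))=\gamma_m\cdot p_1^*e$ for odd $m$ and $e(\tilde O(m))=\gamma_m\cdot e(\tilde O(2))$ for even $m$, where $\gamma_m\in W(k)$ is defined as the image of the Thom-space endomorphism $\Sigma^\infty_T\th(\psi_{m,1}(x))$ (resp.\ $\Sigma^\infty_T\th(\psi_{m/2,2}(x))$) under Morel's isomorphism $\End_{\SH(k)}(\mS_k)\cong\GW(k)$ followed by the surjection $\GW(k)\to W(k)$. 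So the sole new content to verify is that this image coincides with the one produced by the projective self-map $\P(\psi_{m,1}(x))$ (resp.\ $\P(\psi_{m/2,2}(x))$) of $\P^1_k$.

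To prove this identification I would exploit the homogeneity relation $\psi_{m,a}(\lambda v)=\lambda^m\psi_{m,a}(v)$. It implies that $\psi_{m,a}$ is $\G_m$-equivariant on $\A^2\setminus\{0\}$ and therefore descends along the $\G_m$-bundle $\A^2\setminus\{0\}\to\P^1$ to the self-map $\P(\psi_{m,a})$ of $\P^1$. Using the cofiber sequence $\A^2\setminus\{0\}\to\A^2\to T$ and the $\A^1$-contractibility of $\A^2$, one has the canonical motivic equivalence $T:=\A^2/(\A^2\setminus\{0\})\simeq (\P^1)^{\wedge 2}$; under this equivalence the Thom-space map $\th(\psi_{m,a})$ corresponds to the $\P^1$-suspension of $\P(\psi_{m,a})$, one factor of $\P^1$ coming from the $\G_m$-bundle projection and the other from the cofiber collapse. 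Inverting one factor of $\P^1$ in $\SH(k)$ then yields that $\Sigma^\infty_T\th(\psi_{m,a}(x))$ and $\Sigma^\infty_{\P^1}\P(\psi_{m,a}(x))$ represent the same class in $\End_{\SH(k)}(\mS_k)$, hence the same class in $W(k)$.

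The compatibility conditions imposed on the trivializations $\alpha_1,\alpha_m$ for odd $m$ (equality of the determinant ratios $\det\alpha_1\eta_1^{-1}=\det\alpha_m\eta_m^{-1}$ in $k^\times/k^{\times 2}$) and on $\alpha_2,\alpha_m$ for even $m$ (commutativity of \eqref{eqn:Triv} up to a square) are exactly what is needed to guarantee that transporting $\psi_{m,a}(x)$ to a self-map of $\A^2_k$ via the $\alpha$'s introduces no extra unit class $\<\lambda\>\in W(k)$ into the Thom-isomorphism calculation. I expect the main technical point to be this clean comparison between the Thom-space and projective-space descriptions of the induced endomorphism of $\mS_k$; once one has pinned down that the homogeneity of $\psi_{m,a}$ forces the descent to be precisely $\P(\psi_{m,a})$ without any unit twist, the remainder of the argument is just bookkeeping together with the application of \eqref{eqn:mainOdd} and \eqref{eqn:mainEven}.
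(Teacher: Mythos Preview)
Your proposal is correct and matches the paper's approach exactly. The paper presents this lemma with the preface ``We summarize the discussion as follows'' and gives no separate proof: the identities \eqref{eqn:mainOdd} and \eqref{eqn:mainEven} are already established, and the only additional remark the paper makes is that one ``can just as well compute $\gamma_m$ as the endomorphism of $\mS_k$ induced by the $\P^1$-desuspension of $\th(\psi_{m,1}(x))$ (noting that $\psi_m(x)$ is homogeneous)''---precisely the point you isolate and justify via the $\G_m$-equivariance and the cofiber identification $T\simeq(\P^1)^{\wedge 2}$.
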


\section{Descent, normalized bases and the computation of $\gamma_m$}\label{sec:Descent}

We will find a suitable $x$ and compatible isomorphisms $\alpha_1$, $\alpha_2$, $\alpha_m$ by using a descent description of the bundles $\tilde{O}(m)$.

We have $B_0N:=N\backslash \GL_2\subset  BN$,
a $\G_m$-bundle over $N_T\backslash \GL_2=N\backslash \SL_2=\P^2\setminus C$. With respect to the projection $BN\to \BGL_2$, $B_0N$ is the fiber over the base-point $0=\GL_2\backslash \GL_2\in \BGL_2$.

We describe the bundle $\tilde{O}(m)$ restricted to $B_0N$ in terms of descent data. Let $T(\GL_2)\subset N_T(\GL_2)$ be the standard torus in $\GL_2$. The representation $\rho_m$ extends to the representation
\[
\bar{\rho}_m:N_T(\GL_2)\to \GL_2
\]
with $\bar{\rho}_m(\sigma)=\rho_m(\sigma)$ and
\[
\bar{\rho}_m\begin{pmatrix}t_1&0\\0&t_2\end{pmatrix}=\begin{pmatrix}t_1^m&0\\0&t_2^m\end{pmatrix}
\]
Thus $\tilde{O}(m)$ restricted to $B_0N$ is the pullback  of the bundle $\bar{O}(m)$ on $N_T\backslash \GL_2=\P^2\setminus C$ defined by the representation $\bar{\rho}_m$, via the quotient map  $N\backslash \GL_2\to N_T\backslash \GL_2$. 

We let $k(i):=k[T]/(T^2+1)$, giving us the separable $k$-algebra with automorphism $c:k(i)\to k(i)$, $c(i)=-i$, over $k$. For $z=a+bi\in k(i)$, $a,b\in k$, we write $a=\Re z$, $b=\Im z$ and extend this notation in the obvious way to the polynomial ring $k(i)[X_0,\ldots, X_r]$.

We will take a pair of $k(i)$-points $y_1, y_2\in \A^2(k(i))$, linearly independent over $k(i)$, such that
\[
\sigma\cdot \begin{pmatrix}y_1,y_2\end{pmatrix}=
\begin{pmatrix}-i&0\\0&i\end{pmatrix}
\begin{pmatrix}y_1,y_2\end{pmatrix}^c
\]
 This gives us the $k$-point $x\in N\backslash \GL_2=B_0N$. Then a $k(i)$-basis $v_{1, m}, v_{2,m}$ of $F(m)\otimes k(i)=k(i)^2$ descends to a $k$-basis of $\tilde{O}(m)_x$ if the equation
\begin{equation}\label{eqn:descent}
\begin{pmatrix}v_{1m}\\v_{2,m}\end{pmatrix}\cdot \rho_m(\sigma)=\begin{pmatrix}v_{1m}\\v_{2,m}\end{pmatrix}^c\cdot \rho_m\begin{pmatrix}-i&0\\0&i\end{pmatrix}
\end{equation}
is satisfied.

To complete the computation of $e(\tilde{O}(m))$, we have
\begin{theorem}\label{thm:EulerClassOm} Suppose $k$ is a field of characteristic zero, or of characteristic $\ell>2$, with $(\ell,m)=1$.  For $m$ odd, define $\epsilon(m)=+1$ for $m\equiv 1\mod 4$, $\epsilon(m)=-1$ for $m\equiv 3\mod 4$.  Then 
\[
e(\tilde{O}(m))=\begin{cases}  \epsilon(m)\cdot m\cdot p^*e\in H^2(BN,\sW)&\text{ for }m\text{ odd},\\
\frac{m}{2}\cdot\tilde{e}\in H^2(BN,\sW(\gamma))&\text{ for }m\equiv 2\mod 4\\
-\frac{m}{2}\cdot\tilde{e}\in H^2(BN,\sW(\gamma))&\text{ for }m\equiv 0\mod 4
\end{cases}
\]
Moreover $\tilde{e}^2=4p^*e^2$ in $H^4(BN,\sW)$ and $e(\tilde{O}^-(m))=-e(\tilde{O}(m))$.
\end{theorem}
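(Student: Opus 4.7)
The plan is to evaluate $e(\tilde{O}(m))$ by applying Lemma~\ref{lem:Upshot} and then to dispatch the identity $\tilde{e}^2 = 4(p^*e)^2$ and the sign relation $e(\tilde{O}^-(m)) = -e(\tilde{O}(m))$ by separate arguments. First I would produce an explicit $k$-point $x \in BN(k)$ with $x^*\<\bar{q}\> = \<-1\>$ by descent: pick a pair $y_1, y_2 \in \A^2(k(i))$ (where $k(i) = k[T]/(T^2+1)$) linearly independent over $k(i)$ and satisfying the $\sigma$-equivariance \eqref{eqn:descent} for $\rho_1$, so that the induced $k$-point of $B_0N = N\backslash\GL_2 \subset BN$ lies on the locus where $p_1^*Q \equiv -1 \bmod k^{\times 2}$.

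For each $a \in \{1,2,m\}$, I would then pick $k(i)$-bases of $F(a)\otimes k(i)$ satisfying \eqref{eqn:descent} for $\rho_a$ and rescale so that the determinant-compatibility condition of Lemma~\ref{lem:Upshot} holds. In these normalized coordinates $\psi_{m,1}(x)$ (for $m$ odd, respectively $\psi_{m/2,2}(x)$ for $m$ even) becomes an explicit polynomial self-map of $\A^2_k$, namely the Galois descent of $(u,v)\mapsto (u^{m/a},v^{m/a})$ twisted by the basis change from $k(i)$ to $k$; its projectivization is an explicit degree-$m/a$ rational self-map of $\P^1_k$. I would compute the class in $\GW(k)$ of this projectivization using the Scheja--Storch/Kass--Wickelgren Bezoutian formula --- or equivalently as a sum of local indices $\sum_p \mathrm{Tr}_{k(p)/k}\<J(p)\>$ at the fibre of a generic value --- and then reduce modulo the hyperbolic form $h = \<1\>+\<-1\>$ in $W(k)$. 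Careful tracking of the sign $(-1)^m$ in the off-diagonal entry of $\rho_m(\sigma)$ and the sign absorbed into the normalization of $\alpha_m$ should yield $\gamma_m = \epsilon(m)\cdot m$ for $m$ odd, $\gamma_m = m/2$ for $m \equiv 2 \pmod{4}$ and $\gamma_m = -m/2$ for $m \equiv 0 \pmod{4}$; Lemma~\ref{lem:Upshot} then gives the main formulas.

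For $\tilde{e}^2 = 4(p^*e)^2$, note that by Proposition~\ref{prop:WittCoh1}, $H^4(BN,\sW)$ is the free $W(k)$-module on $(p^*e)^2$, so the identity reduces to identifying a single coefficient. I would use that for any rank-$2$ bundle $V$ in an $\SL$-oriented $\eta$-invertible theory one has $e(V)^2 = p_1(V)$ in $H^4$ (since the twist $(\det V)^{\otimes -2}$ is trivial in $\sW$), and then pin down $p_1(\tilde{O}(2))$ and $p_1(\tilde{O}(1)) = (p^*e)^2$ by pulling back along the degree-$2$ cover $B\G_m \to BN$, where both bundles split as $O(\pm a)$ and $p_1$ is determined by Ananyevskiy's formulas (Theorem~\ref{thm:AnanBSLCoh} and the $\SL_2$-splitting principle). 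For the sign twist, observe that $\rho_m^- = \rho_m \otimes \rho^-$ where $\rho^-$ is the sign character on $N$, so that $\tilde{O}^-(m) = \tilde{O}(m) \otimes \gamma$; re-running the descent computation with $\rho_m^-$ in place of $\rho_m$ modifies the normalized basis at $x$ by an element whose determinant is $-1$ modulo squares, and this multiplies the normalized map $\P(\psi_{m,a}(x))$ by an orientation-reversing automorphism, and hence its $W(k)$-class by $\<-1\> = -1$.

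The main obstacle is the Bezoutian/local-degree step in the second paragraph: one must carry out a concrete $\GW$-degree computation for a Galois-descended rational map and align several sign conventions (the descent sign $(-1)^m$ in $\rho_m(\sigma)$, the orientation sign absorbed into $\alpha_m$, and the sign convention of Scheja--Storch) against the stated formulas for $\epsilon(m)$ and the alternation between $m \equiv 0$ and $m \equiv 2 \pmod 4$. Once those normalizations are correctly tracked, the remaining steps are essentially bookkeeping within the known structure of $H^*(BN,\sW)$ described in Proposition~\ref{prop:WittCoh1}.
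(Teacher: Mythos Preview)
Your overall strategy for the Euler class formulas is essentially the paper's: invoke Lemma~\ref{lem:Upshot}, produce an explicit descent point $x$ with $x^*\<\bar q\>=\<-1\>$, choose bases of $F(a)\otimes k(i)$ satisfying \eqref{eqn:descent} with compatible determinants, and compute the $\A^1$-degree of the resulting projectivized self-map of $\P^1$. The paper carries this out with the concrete point $x^*=\begin{pmatrix}1&-i\\-i&1\end{pmatrix}$ and explicit bases depending on $m\bmod 4$, obtaining $\P(\psi_{m,a})([x_0:x_1])=[\Re(x_0\pm ix_1)^m:\Im(x_0\pm ix_1)^m]$; it then factors $m$ into primes and computes the $\A^1$-Brouwer degree of each $\P(\psi_{p,a})$ as a sum of local indices, using a separate number-theoretic lemma on the trace form of the real subfield of $\Q(\zeta_{4p})$ (Lemma~\ref{lem:Trace}). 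Your local-index/Bezoutian description is the same mechanism, though you should anticipate that the hard content is precisely this trace-form identity.

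There is, however, a genuine gap in your plan for $\tilde e^2=4(p^*e)^2$. Pulling back along the double cover $\phi:B\G_m\to BN$ cannot pin down any positive-degree class: $B\G_m\simeq\P^\infty$ in $\A^1$-homotopy, and $H^n(\P^N,\sW)=0$ for all $n>0$ (Fasel's computation, used in the proof of Lemma~\ref{lem:Witt2}), so $\phi^*$ annihilates $H^4(BN,\sW)$ entirely. In particular you cannot read off $p_1(\tilde O(2))$ this way, and Ananyevskiy's formulas for $\BSL_n$ say nothing about $B\G_m$. The paper instead treats $\tilde e^2$ by the same degree method: it considers $\psi_{2,1}\times\psi_{2,1}:\tilde O(1)\oplus\tilde O(1)\to\tilde O(2)\oplus\tilde O(2)$, tracks the relative determinant (which contributes a factor $\<-1\>$ after identifying $\sW(\det^{-2}\tilde O(2))$ with $\sW$), and reads off $e(\tilde O(2))^2=[\P(\psi_{2,1})]^2\cdot(p^*e)^2=4(p^*e)^2$. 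Your approach to the sign relation via $\tilde O^-(m)\cong\tilde O(m)\otimes\gamma$ is correct but heavier than necessary; the paper simply notes that $(x,y)\mapsto(-x,y)$ is an isomorphism $\rho_m\cong\rho_m^-$ with determinant $-1$.
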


\begin{proof} Noting that $(x,y)\mapsto (-x, y)$ defines an isomorphism of $\rho_m$ with $\rho_m^-$ (as right representations) and induces $(-1)$ on the determinant, the assertion for 
$e(\tilde{O}^-(m))$ follows.

The proof relies on the approach set forth in  Lemma~\ref{lem:Upshot}. We find a suitable $k$-point $x$ of $B_0N$ and compatible trivializations $\alpha_m$, compute the maps $\P(\psi_{m,a})$ and use these to compute the factor $\gamma_m$. 

We begin by finding a $k$-point $x\in B_0N=N\backslash\GL_2$ lying over 
 the point $[1:0:1]\in N_T(\GL_2)\backslash\GL_2=\P^2\setminus C$  with $Q(x)=-1\mod k^{\times 2}$.  

We have the double cover $T(\SL_2)\backslash\SL_2\to N\backslash\SL_2$, that is, 
$\P^1\times\P^1\setminus\Delta\to \P^2\setminus C$, and the pair of conjugate points $y,\bar{y}\in \P^1\times\P^1\setminus\Delta$ lying over $[1:0:1]$. As the map $\pi:\P^1\times\P^1\to \P^2$ is given by
\[
\pi([x_0:x_1], [y_0, y_1])=[x_0y_0: x_0y_1+x_1y_0:x_1y_1]
\]
we compute the conjugate points $y, \bar{y}$ as
\[
y=[1:-i], \bar{y}=[1:i].
\]
We lift $y,\bar{y}$ to the pair $y_1=(1, -i)$, $y_2=(-i, 1)$ $y_1, y_2\in k(i)^2$, giving the matrix
\[
x^*:=\begin{pmatrix}y_1\\y_2\end{pmatrix}=\begin{pmatrix}1&-i\\-i&1\end{pmatrix}\in \GL_2(k(i))
\]
with determinant 2. The pair $y_1, y_2$ defines a $k$-point $x$ of $B_0N=N\backslash \GL_2$ lying over $(1:0:1)\in N_T\backslash\GL_2$, since
\[
\sigma\cdot \begin{pmatrix}1&-i\\-i&1\end{pmatrix} =
\begin{pmatrix}-i&0\\0&i\end{pmatrix}\cdot \begin{pmatrix}1&-i\\-i&1\end{pmatrix}^c.
\] 

Identifying the fiber $\tilde{O}(m)_{x^*}$ with $F(m)\times x^*=k(i)^2$, we have the standard basis $e_{1}=(1,0)$, $e_{2}=(0,1)$ for $\tilde{O}(m)_{x^*}$. We first consider the case of odd $m$. 

For $m\equiv1\mod 4$, define $v_{1,m}=(1, i)$, $v_{2,m}=(i,1)$. We have
\[
\begin{pmatrix}v_{1, m}\\v_{2,m}\end{pmatrix}\rho_m(\sigma)=
 \begin{pmatrix}v_{1, m}\\v_{2,m}\end{pmatrix}^c\rho_m\begin{pmatrix}-i&0\\0&i\end{pmatrix}
\]
so $v_{1,m}, v_{2,m}$ satisfy the descent equation \eqref{eqn:descent} and thus descend to a $k$-basis of $\tilde{O}(m)_x$ with determinant $2\cdot e_1\wedge e_2$. 

For $m\equiv 3\mod 4$, define $v_{1,m}=(1,-i)$, $v_{2,m}=(-i,1)$, and we have as above
\[
\begin{pmatrix}v_{1, m}\\v_{2,m}\end{pmatrix}\rho_m(\sigma)=
 \begin{pmatrix}v_{1, m}\\v_{2,m}\end{pmatrix}^c\rho_m\begin{pmatrix}-i&0\\0&i\end{pmatrix}.
\]
The $k$-basis $v_{1,m}, v_{2,m}$ also has determinant $2\cdot e_1\wedge e_2$.

We remark that in all cases of odd $m$, our basis has determinant   $2\cdot e_1\wedge e_2$, in particular
\[
\left<\frac{v_{1,m}\wedge v_{2,m}}{v_{1,1}\wedge v_{2,1}}\right>=1\text{ in }W(k).
\]

For $m\equiv 2\mod 4$, define  $v_{1,m}=(1,-1)$, $v_{2,m}=(i,i)$.  Then
\[
\begin{pmatrix}v_{1, m}\\v_{2,m}\end{pmatrix}\rho_m(\sigma)=\begin{pmatrix}v_{1, m}\\v_{2,m}\end{pmatrix}^c \cdot \rho_m\begin{pmatrix}-i&0\\0&i\end{pmatrix}
\]
The basis $v_{1,m}, v_{2,m}$ has determinant $2i\cdot e_1\wedge e_2$, but in the case of even $m$, the representation $\rho_m$ does not land in $\SL_2$, and thus the determinant bundle $\det\tilde{O}(2)$ is the 2-torsion generator $\gamma$ of $\Pic(N\backslash\GL_2)$, corresponding to the character $\rho(0)^-$ of $N$. In any case,
$v_{1,m}\wedge v_{2,m}$  gives a generator for $\det\tilde{O}(2)_x$.

For $m\equiv 0\mod 4$, we use the basis $v_{1,m}=(1,1)$, $v_{2,m}=(-i,i)$ for $\tilde{O}(m)_x$, which  satisfies the  descent equation
\[
\begin{pmatrix}v_{1, m}\\v_{2,m}\end{pmatrix}\rho_m(\sigma)=\begin{pmatrix}v_{1, m}\\v_{2,m}\end{pmatrix}^c \cdot \rho_m\begin{pmatrix}-i&0\\0&i\end{pmatrix}
\] 
and has the determinant $2i\cdot e_1\wedge e_2$. 

For all $m$ even, we have
\[
\left<\frac{v_{1,m}\wedge v_{2,m}}{v_{1,2}\wedge v_{2,2}}\right>=1\text{ in }W(k).
\]

The multiplication map $\psi_{m,a}:\tilde{O}(a)_{\tilde{x}}\to \tilde{O}(am)_{\tilde{x}}$ is given by 
\[
\psi_m(x_0\cdot e_1+x_1 e_2)=x_0^m\cdot e_1+x_1^m\cdot e_2.
\]
We express $\psi_m$ with respect our bases for $\tilde{O}(a)_x$ and $\tilde{O}(am)_x$. 

For $a, m\equiv 1\mod 4$, we have
\[
e_1=\frac{1}{2}(v_{1,a}-i v_{2,a})=\frac{1}{2}(v_{1,am}-i v_{2,am}),\ e_2=-\frac{i}{2}(v_{1,a}+ i v_{2,a})=-\frac{i}{2}(v_{1,am}+ i v_{2,am})
\]
and $\psi_{m,a}$ is given by
\begin{multline*}
\psi_{m,a}(x_0\cdot v_{1,a}+x_1\cdot v_{2,a})=
\psi_{m,a}((x_0+ix_1)\cdot e_1+i (x_0-ix_1)\cdot e_2)\\=
(x_0+ix_1)^me_1+i (x_0-ix_1)^me_2\\=
\frac{1}{2}((x_0+ix_1)^m+(x_0-ix_1)^m)v_{1,am}-\frac{i}{2}((x_0+ix_1)^m-(x_0-ix_i)^m)v_{2,am}\\
=\Re(x_0+ix_1)^m\cdot v_{1,m}+\Im(x_0+ix_1)^m\cdot v_{2,am}.
\end{multline*}

For $m\equiv 3\mod 4$, $a\equiv 1\mod 4$, we have
\[
e_1=\frac{1}{2}(v_{1,am}+i v_{2,am}),\ e_2=\frac{i}{2}(v_{1,am}-i v_{2,am})
\]
and $\psi_{m,a}$ is given by
\begin{multline*}
\psi_{m,a}(x_0\cdot v_{1,a}+x_1\cdot v_{2,a})=
\psi_{m,a}((x_0+ix_1)\cdot e_1+i (x_0-ix_1)\cdot e_2)\\=
(x_0+ix_1)^me_1-i (x_0-ix_1)^me_2\\=
\frac{1}{2}((x_0+ix_1)^m+(x_0-ix_1)^m)v_{1,am}+\frac{i}{2}((x_0+ix_1)^m-(x_0-ix_i)^m)v_{2,am}\\
=\Re(x_0-ix_1)^m\cdot v_{1,am}+\Im(x_0-ix_1)^m\cdot v_{2,am}.
\end{multline*}
For $a\equiv 3\mod 4$, we arrive at the same formulas, giving in the end the formulas for all odd $a$
\begin{multline*}
\psi_{m,a}(x_0\cdot v_{1,a}+x_1\cdot v_{2,a})\\=\begin{cases}
\Re(x_0+ix_1)^m\cdot v_{1, am}+\Im(x_0+ix_1)^m\cdot v_{2,am}&\text{ for }m\equiv 1\mod 4,\\
\Re(x_0-ix_1)^m\cdot v_{1,am}+\Im(x_0-ix_1)^m\cdot v_{2,am}&\text{ for }m\equiv 3\mod 4
\end{cases}
\end{multline*}

For even $m$, we write $m=2^rn$ with $n$ odd and write $\psi_{m/2,2a}:\tilde{O}(2a)\to \tilde{O}(am)$ as the composition
\[
\tilde{O}(2a)\xrightarrow{\psi_{n,2a}}\tilde{O}(2an)\xrightarrow{\psi_{2, 2an}}
\tilde{O}(2^2an)\xrightarrow{\psi_{2, 2^2an}}\ldots
\xrightarrow{\psi_{2, 2^{r-1}an}} \tilde{O}(am)
\]
A computation as above yields
\[
\psi_{n, 2a}(x_0\cdot v_{1,2a}+x_1\cdot v_{2,2a})=\Re (x_0+ix_1)^n\cdot v_{1,2an}+\Im (x_0+ix_1)^n\cdot v_{2,2an}
\]
For $\psi_{2, 2^san}$ there are two cases: We take $a$ odd. Then
\[
\psi_{2,2an}(x_0\cdot v_{1,2an}+x_1\cdot v_{2,2an})=\Re (x_0-ix_1)^2\cdot v_{1, 4an}+\Im(x_0-ix_1)^2\cdot v_{2,4an}
\]
and for all $s\ge 2$ 
\[
\psi_{2,2^san}(x_0\cdot v_{1,2^san}+x_1\cdot v_{2,2^san})=\Re (x_0+ix_1)^2\cdot v_{1, 2^{s+1}an}+\Im(x_0+ix_1)^2\cdot v_{2,2^{s+1}an}
\]
This second formula also takes care of the case of even $a$. 

Finally, we need to consider the map $\psi_{2,1}:\tilde{O}(1)\to \tilde{O}(2)$. We have
\[
e_1=\frac{1}{2}(v_{1,2}-i v_{2,2}),\ e_2=-\frac{1}{2}(v_{1,2}+i v_{2,2})
\]
and $\psi_{2,1}$ is given by
\begin{multline*}
\psi_{2,1}(x_0\cdot v_{1,1}+x_1\cdot v_{2,1})=
\psi_{2,1}((x_0+ix_1)\cdot e_1+i(x_0-ix_1)\cdot e_2)\\=
(x_0+ix_1)^2\cdot e_1-(x_0-ix_1)^2\cdot e_2\\=
\frac{1}{2}((x_0+ix_1)^2+(x_0+ix_1)^2)v_{1,m}-\frac{i}{2}((x_0+ix_1)^2-(x_0-ix_i)^2)v_{2,2}\\
=\Re(x_0+ix_1)^2\cdot v_{1,2}+\Im(x_0+ix_1)^2\cdot v_{2,2}.
\end{multline*}

Thus, as an endomorphism of $\P^1$, we have the expression for $\P(\psi_{m,a})$:
\begin{multline*}
\P(\psi_{m,a})([x_0:x_1])\\=\begin{cases}[\Re(x_0+ix_1)^m:\Im(x_0+ix_1)^m]&\text{ for }m\equiv 1\mod 4, a\text{ odd},\\
&\text{ for }m\text{ odd, } a\text{ even},\\
&\text{ for }m=2, a\equiv0\mod 4,\\
&\text{ and for }m=2, a=1,\\
[\Re(x_0-ix_1)^m:\Im(x_0-ix_1)^m]&\text{ for }m\equiv 3\mod 4, a\text{ odd},\\
&\text{ and for }m=2, a\equiv 2\mod 4.
\end{cases}
\end{multline*}
In all cases except for $\psi_{2,1}:\tilde{O}(1)\to \tilde{O}(2)$, the ``relative determinants'' are 1 in $W(k)$.

To complete the computation, we  use Lemma~\ref{lem:Degree} below to determine the $\A^1$-degree $[\P(\psi_{m,a})]\in W(k)$ of $\P(\psi_{m,a}):\P^1_\Q\to \P^1_\Q$ in cases described above. Combined with our table above, that lemma gives, for $p$ a prime, 
\[
[\P(\psi_{p,a})]=\begin{cases} \epsilon(p)\cdot p\<1\>&\text{ for }p\text{ odd and }a\text{ odd},\\
p\<1\>&\text{ for }p\text{ odd and }a\text{ even},\\
2\<1\>&\text{ for } p=2, a\equiv 0\mod 4,\\
-2\<1\>&\text{ for }p=2, a\equiv 2 \mod 4\\
2\<1\>&\text{ for } p=2, a=1.
\end{cases}
\]

This computation of the $\A^1$-degree of $\P(\psi_{p,a})$ together with Lemma~\ref{lem:Upshot} yields the computation of $e(\tilde{O}(m))$ as asserted in Theorem~\ref{thm:EulerClassOm}: in the case of odd $m$, we write $m=\prod_{i=1}^rp_i$ with the $p_i$ odd primes, let $a_i=\prod_{j=1}^{i-1}p_i$ and write $\psi_{m,1}=\psi_{p_r, a_r}\circ\ldots\circ \psi_{p_1,1}$.  This gives $e(\tilde{O}(m))=\prod_{i=1}^r\epsilon(p_i)\cdot p_i\cdot p^*e=\epsilon(m)\cdot m\cdot p^*e$.  For $m=2^r\cdot n$ with $n$ odd, we write
$\psi_{m/2,2}=\psi_{2,m/2}\circ\ldots\circ\psi_{2, 2n}\circ\psi_{n,2}$, which, computing $[\P(\psi_{n,2})]$ as above,  gives 
$e(\tilde{O}(m))=\pm 2^{r-1}n\cdot e(\tilde{O}(2))$, with the sign $+$ for $r=1$, $-$ for $r>1$. 

We complete the proof by showing $e(\tilde{O}(2))^2=4e(\tilde{O}(1))^2$ in $H^4(BN,\sW)$. The bundle $\tilde{O}(2)\oplus\tilde{O}(2)$ is the bundle associated to the representation 
\[
\rho_2\oplus\rho_2:N\to \SL_4
\]
As $\det^{-2}(\tilde{O}(2))$ is the line bundle associated to the representation $(\rho_0^-)^2=\rho_0$, we have a canonical isomorphism $\det^{-2}(\tilde{O}(2))\cong O_{BN}$, and via the induced identification of $H^4(BN, \sW(\det^{-2}(\tilde{O}(2)))\cong H^4(BN, \sW)$, we have  $e(\tilde{O}(2))^2=e(\tilde{O}(2)\oplus\tilde{O}(2))$. We have the multiplication map 
\[
\psi_2\times\psi_2:\tilde{O}(1)\oplus\tilde{O}(1)\to
\tilde{O}(2)\oplus\tilde{O}(2)
\]
Using the bases $(v_{1,1},0)$, $(v_{2,1},0)$, $(0,v_{1,1})$, $(0, v_{2,1})$ for 
$(\tilde{O}(1)\oplus\tilde{O}(1))_x$ and $(v_{1,2},0)$, $(v_{2,2},0)$, $(0,v_{1,2})$, $(0, v_{2,2})$ for 
$(\tilde{O}(2)\oplus\tilde{O}(2))_x$, we have the respective determinants $4$ and $-4$ times the generator $(e_1,0)\wedge(e_2,0)\wedge(0,e_1)\wedge (0,e_2)$, that is, a relative determinant of -1. Note  that $e(\tilde{O}(2))$ lives in $H^2(BN,\sW(\det^{-1}\tilde{O}(2)))$, and to put   $e(\tilde{O}(2))^2\in H^4(BN,\sW(\det^{-2}\tilde{O}(2)))$ into $H^4(BN,\sW)$ we need to multiply with the square of a local generator for $\det\tilde{O}(2)$ to give the isomorphism $\sW(\det^{-2}\tilde{O}(2))\to \sW$. At the point $x$, a local generator is $v_{1,2}\wedge v_{2,2}=2i e_1\wedge e_2$, so multiplying by $(v_{1,2}\wedge v_{2,2})^2$ introduces the additional factor $\<-4\>=-1$ into the computation. Putting this all together, we see  that the $\A^1$ degree of 
\[
\P(\psi_{2,1})\wedge\P(\psi_{2,1}):\P^1\wedge\P^1\to \P^1\wedge\P^1
\]
computes $e(\tilde{O}(2))^2$. As $\P(\psi_{2,1})$ has $\A^1$-degree $2\<1\>$, this gives $e(\tilde{O}(2))^2=4p^*e^2$.
\end{proof} 

We finish this section with proofs of results we have used in the proof of Theorem~\ref{thm:EulerClassOm}.

\begin{lemma}\label{lem:Degree} Let $k$ be a perfect field and let $m$ be an integer.  Consider the maps $G_{m\pm}:\P^1_k\to \P^1_k$
\begin{enumerate}
\item[(a)]  $G_{m+}(x_0:x_1)=(\Re(x_0+ix_1)^m:\Im(x_0+ix_1)^m)$
\item[(b)] $G_{m-}(x_0:x_1)=(\Re(x_0-ix_1)^m:\Im(x_0-ix_1)^m)$,
\end{enumerate}
and let  $[G_{m\pm}]\in W(k)$ be the image of the class of $G_{\pm m}$ in $\End_{\SH(k)}(\mS_k)=\GW(k)$ under the surjection $\GW(k)\to W(k)$.  Let $p$ be a prime. If   $\Char k=\ell>0$ we suppose that  $(\ell, 2p)=1$. Then  $[G_{p\pm}]=\pm p\cdot \<1\>$.
\end{lemma}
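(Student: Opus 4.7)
My plan is to use the Kass-Wickelgren local-index formula to compute $[G_{p+}]$ and to reduce $[G_{p-}]$ via a sign-flip. First I would observe that the involution $\alpha\colon\P^1_k\to\P^1_k$, $(x_0:x_1)\mapsto(x_0:-x_1)$, has $\A^1$-degree $\<-1\>$ by a direct local calculation. Since $(x_0-ix_1)^m=\overline{(x_0+ix_1)^m}$, we have $\Re(x_0-ix_1)^m=\Re(x_0+ix_1)^m$ and $\Im(x_0-ix_1)^m=-\Im(x_0+ix_1)^m$, whence $G_{m-}=\alpha\circ G_{m+}$. Therefore $[G_{p-}]=\<-1\>\cdot[G_{p+}]$ in $\GW(k)$, and since $\<-1\>=-\<1\>$ in $W(k)$ this becomes $[G_{p-}]=-[G_{p+}]$. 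It thus suffices to prove $[G_{p+}]=p\<1\>$ in $W(k)$.

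For $p$ odd, I would apply the Kass-Wickelgren trace formula at the regular value $y_0=(1:0)$. In the affine chart $t=x_1/x_0$ on the source, $G_{p+}$ is given by $f(t)=\Im(1+it)^p/\Re(1+it)^p$, and the preimages of $y_0$ are $t=0$ together with $t_\zeta=i(1-\zeta)/(1+\zeta)$ for $\zeta$ ranging over nontrivial $p$-th roots of unity in $\bar k$, forming a single Galois orbit with residue field $L/k$. Using that $(1+it_\zeta)^p\in k(\zeta)$ is real at each such preimage, a Cauchy-Riemann type calculation gives
\[
f'(t_\zeta) \;=\; \frac{p\,\Re(1+it_\zeta)^{p-1}}{\Re(1+it_\zeta)^p} \;=\; \frac{p}{1+t_\zeta^2} \;=\; \frac{p(1+\zeta)^2}{4\zeta}.
\]
Since $\zeta=(\zeta^{(p+1)/2})^2$ is a square in $k(\zeta)$ for $p$ odd, this simplifies to $\<f'(t_\zeta)\>=\<p\>$, and summing over Galois orbits yields $[G_{p+}]=\<p\>\bigl(1+\Tr_{L/k}\<1\>\bigr)$ in $\GW(k)$.

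The main obstacle will be to verify the Witt-theoretic identity $\<p\>(1+\Tr_{L/k}\<1\>)=p\<1\>$ in $W(k)$. To handle this I would base-change to $k(i)$: in the coordinates $u=x_0+ix_1$, $v=x_0-ix_1$, the map $G_{p+}$ becomes the $p$-th power map $(u:v)\mapsto(u^p:v^p)$ on $\P^1_{k(i)}$, whose $\A^1$-degree is Morel's $p_\epsilon$, reducing to $\<1\>$ in $W(k(i))$ (since $-1=i^2$ forces $2\<1\>=0$ there). The same holds for $p\<1\>$ when $p$ is odd, so $[G_{p+}]-p\<1\>$ lies in $\ker\bigl(W(k)\to W(k(i))\bigr)$, which is the principal ideal generated by $2\<1\>$. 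To pin down the precise element I would invoke Serre's formula for the Hasse-Witt invariant of the trace form of $L/k$ (with $L=k(\tan(\pi/p))$, whose discriminant is computable from cyclotomic data), together with signature comparisons at each real ordering of $k$; these invariants force the difference to vanish.

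For $p=2$, a direct calculation suffices. At the regular value $(1:1)$, the equation $2t/(1-t^2)=1$ has solutions $t_\pm=-1\pm\sqrt 2\in k(\sqrt 2)$ forming a single Galois orbit, and $f'(t_\pm)=2\pm\sqrt 2$. Diagonalizing the induced trace form $\Tr_{k(\sqrt 2)/k}\<2+\sqrt 2\>$ in the basis $\{1,\sqrt 2\}$ of $k(\sqrt 2)/k$ yields $4a^2+8ab+8b^2=4(a+b)^2+4b^2$, hence $\<4,4\>=2\<1\>$ in $\GW(k)$. Thus $[G_{2+}]=2\<1\>$ and $[G_{2-}]=-2\<1\>$ in $W(k)$.
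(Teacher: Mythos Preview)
Your overall strategy—compute the $\A^1$-Brouwer degree at a regular value and reduce to a trace-form identity—is the same as the paper's, and your reduction $G_{m-}=\alpha\circ G_{m+}$ (hence $[G_{p-}]=-[G_{p+}]$ in $W(k)$) is a clean shortcut the paper does not use. Your $p=2$ computation is correct and parallels the paper's direct calculation. However, two steps in the odd-prime case are not justified as written.

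\medskip
\textbf{The simplification $\<f'(t_\zeta)\>=\<p\>$.} You argue in $k(\zeta)=k(\zeta_p)$ that $\zeta$ is a square, hence $p(1+\zeta)^2/(4\zeta)$ equals $p$ up to squares. But the local index lives in $\GW(L)$ with $L=k(t_\zeta)$, and $L$ is \emph{not} $k(\zeta_p)$: both are index-$2$ subfields of $k(\zeta_{4p})$, but $L$ is totally real while $k(\zeta_p)$ is not, so an element of $L$ that is a square in $k(\zeta_p)$ need not be a square in $L$. What you actually need is that $1+t_\zeta^2=\sec^2(\pi j/p)$ is a square in $L$, i.e.\ that $\cos(\pi j/p)\in L$. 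This is true—one checks $L$ coincides with the paper's $F_p=\Q(\zeta_{4p}+\zeta_{4p}^{-1})$ and $\cos(\pi j/p)$ lies in the real subfield of $\Q(\zeta_p)\subset F_p$—but your argument does not establish it. The paper handles the analogous point by showing $\sqrt{p}\in F_p$, so that the derivative at the non-rational fiber is a square outright, giving $[G_{p+}]=\<p\>+\Tr_{F_p/\Q}\<1\>$ (which agrees with your $\<p\>(1+\Tr_{L/k}\<1\>)$ once $\<p\>_L=\<1\>_L$).

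\medskip
\textbf{The trace-form identity.} Your base-change to $k(i)$ only shows that $[G_{p+}]-p\<1\>$ lies in the ideal $(2\<1\>)$ of $W(k)$; it does not pin the element down. The paper isolates this as a separate result (the trace form of $F_p/\Q$ satisfies $\<p\>+\Tr_{F_p/\Q}\<1\>\sim p\<1\>$), proved either via the Bayer--Suarez lattice identification $Q_{4p}\cong q_{A_{p-1}}$ or via an explicit discriminant/Hasse--Witt/signature computation using Serre's theorem. Your ``invoke Serre's formula together with signature comparisons'' is a pointer to this argument, not a proof, and it presumes $k=\Q$; the positive-characteristic case (which the paper handles by a specialization argument from the unramified integral model) is not addressed at all.
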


\begin{proof} We first consider the case of characteristic zero; we may assume $k=\Q$.
We compute the $\A^1$-degree of $g:=G_{p\pm}:\P^1_k\to \P^1_k$ as the Brouwer degree (see \cite[Chapter 2]{MorelNewton}): Take $\A^1\subset \P^1$ the open subscheme $x_1\neq0$ and use the parameter $t=x_0/x_1$ on $\A^1$. We pick a regular value $s\in \A^1(\Q)\subset\P^1(\Q)$ with $g^{-1}(s)\subset \A^1$, and take the sum of the trace forms $Tr_{\Q(t_i)/\Q}$ of the 1-dimensional quadratic forms $\<\frac{dg}{dt}(t_i)\>\in \GW(k(t_i))$ at the closed point $t_i$ of inverse image $g^{-1}(s)=\{t_1,\ldots, t_r\}$. 

We first consider the case $g=G_{p+}$ with $p$ odd. We take  $s=(0:1)$. $g^{-1}(s)$ consists of two closed points: $t_0=(0:1)$ and $t_1\in \{x_0\neq0\}\subset \P^1$. Indeed $g^{-1}(0:1)$ is the set of solutions to $\Im(a+ib)^p=0$, up to scaling. One solution is $(a:b)=(0:1)$, since $p$ is odd. For $a\neq0$, we look for solutions in $\C$ (up to scaling by $\R$) to 
\[
(a+ib)^p=\lambda\cdot i;\quad \lambda\in \R
\]
which gives 
\[
(a+ib)^{4p}=\lambda^4=\mu \in \R_{>0};\ a\neq0.
\]
Dividing by $\mu^{1/4p}$, we may assume that $(a+ib)^{4p}=1$, in other words, $a+ib$ is a $4p$th root of unity. The real number $a/b$ is thus a number of the form $\cot(m\pi/2p)$, and the set of $p-1$ solutions $(a:b)\in \P^1(\C)$ with $a\neq0$ is exactly the set 
\[
S:=\{(a:b)=(x:1)\mid x=\cot(m\pi/2p), m=1,\ldots, p-1\}.
\]
As 
\[
\cot(m\pi/2p)=\frac{i\cdot(e^{2m\pi i/4p}+e^{-2m\pi i/4p})}{e^{2m\pi i/4p}-e^{-2m\pi i/4p}}
\]
is in the real subfield $F_p\subset \Q(\zeta_{4p})$, which has degree $p-1$ over $\Q$, and $S$ consists of the $p-1$ conjugates of $\cot(\pi/2p)$ over $\Q$, it follows that the closed point $t_1=(a:1)$ of $\P^1(\Q)$ corresponding to $(\cot(\pi/2p):1)\in \P^1(\bar{\Q})$ has residue field $F_p$.  

We now compute the derivative $g'(t)=dg/dt$ of $g$ at $t_0$ and $t_1$, first for $p\equiv1\mod 4$.

We use the parameter $t=x_0/x_1$, so
\[
g(t)=\frac{\Re(t+i)^p}{\Im(t+i)^p}.  
\]
At $t_0=(0:1)$, we find
\[
g(t)=p\cdot t+\text{higher order terms}
\]
so $g'(0)=p$, which gives us the term $\<p\>$.

We now consider $t_1=(a_1: 1)$ Since $g(t)$ is a rational function in one variable $t$, to make the computation of $g'(t)$ we may consider $g$ as a real-valued function of a real variable $t$. We change coordinates: $x_0=r\cos\theta$, $x_1=r\sin\theta$,  $t=\cot\theta$, and
\[
g(\theta)=\frac{\Re(r^p(\cos\theta+i\sin\theta)^p)}{\Im(r^p(\cos\theta+i\sin\theta)^p)}=
\frac{\cos p\theta}{\sin p\theta}=\cot p\theta.
\]
Thus
\begin{align*}
\frac{dg}{dt}&=\left(\frac{d \cot p\theta}{d\theta}\right)\cdot\left(\frac{dt}{d\theta}\right)^{-1}\\
&=p\cdot \frac{-1}{\sin^2 p\theta}\cdot (-\sin^2\theta)\\
&=p\cdot \frac{\sin^2\theta}{\sin^2 p\theta}\\
&=\frac{p\cdot (1+t^2)^{p-1}}{h(t)^2}
\end{align*}
with
\[
h(t)= \sum_{j=0}^{\frac{p-1}{2}}(-1)^j{\small\begin{pmatrix}p\\p-2j-1\end{pmatrix}}t^{p-2j-1}.
\]
As $\Q(\sqrt{(-1)^{p-1/2}p})\subset \Q(\zeta_p)$, it follows that $\Q(\sqrt{p})\subset F_p$, so $p$ is a square in $F_p$ and thus $g'(t_1)$ 
is a square in $F_p$. This gives the $\A^1$ Brouwer degree of $g$ as
\[
\<p\>+\Tr_{F_p/\Q}(\<1\>)
\]
which by Lemma~\ref{lem:Trace} below is equivalent to a sum of $p$ squares, so $[G_{p+}]=p\cdot\<1\>\in W(k)$.

Next, we take $g=G_{p-}$, $p$ still odd. We have
\[
g(t)=\frac{\Re(t-i)^p}{\Im(t-i)^p},
\]
which is exactly the negative of the case $G_{p+}$, so $g'(0)=-p$ and $g'(t_1)=\frac{-p(1+a_1)^{p-1}}{h(a_1)^2}$. This gives
\[
[G_{p-}]=
\<-p\>+\Tr_{F_p/\Q}(\<-1\>)=-p\cdot\<1\>\in W(\Q).
\]

For $p=2$, we have 
\[
G_{2+}(x_0:x_1)=(x_0^2-x_1^2, 2x_0x_1).
\]
With $t=x_0/x_1$, this gives the map 
\[
g(t)=\frac{1}{2}\cdot(t-t^{-1});\quad g'(t)=\frac{1}{2}\cdot(1+t^{-2})
\]
so $g^{-1}(0)=\{1, -1\}$,  $g'(1)=g'(-1)=1$ and the $\A^1$ Brouwer degree is $\<1\>+\<1\>=2\<1\>$. 

Similarly
\[
G_{2-}(x_0:x_1)=(x_0^2-x_1^2, -2x_0x_1).
\]
With $t=x_0/x_1$, this is the map 
\[
g(t)=-\frac{1}{2}\cdot(t-t^{-1});\quad g'(t)=-\frac{1}{2}\cdot(1+t^{-2})
\]
and the $\A^1$ Brouwer degree is $\<-1\>+\<-1\>=-2\<1\>\in W(k)$.

We now consider $k$ of characteristic $\ell>0$ with $(\ell, 2p)=1$.  We take  $k=\F_\ell$,   $\ell\neq 2, p$ and we use the same argument as above, with the following modifications, first for odd $p$:\\[10pt]
i. For $F\in \{\Q(\zeta_{4p}), \Q(\zeta_p), F_p, \Q(i), \Q(\sqrt{p})\}$, the number ring $\sO_F$ is unramified at $\ell$. In the above discussion, we replace $F$ with the \'etale algebra $\sO_F\otimes_\Z\F_\ell$.  \\[5pt]
ii. We note that $\cot(\pi/2p)\in F_p$ is actually an algebraic integer; we replace  $\cot(\pi/2p)\in \sO_{F_p}$ with its image in $\sO_{F_p}\otimes_\Z\F_\ell$. This shows that the closures of $0, a_1$ in $\Spec \Z[t]$ are finite over $\Spec \Z$; we  let $\bar{0}, \bar{a}_1\in \Spec \F_\ell[t]$ denote their respective specializations.\\[5pt]
iii. We replace the polynomials $\Re(x_0+ix_1)^p, \Im(x_0+ix_1)^p\in \Z[x_0, x_1]$ with their respective images in $\F_\ell[x_0, x_1]$. Letting $\mathfrak{m}_{0,a_1}\subset \Z_{(\ell)}[t]$ be the ideal of $\{0, a_1\}$, we similarly replace $h(t)\in \Z[t]$ and  $g(t)\in \Z_{(\ell)}[t]_{(\mathfrak{m}_{0,a_1})}$ with $\bar{h}(t)\in \F_\ell[t]$ and $\bar{g}(t)\in \F_\ell[t]_{(\bar{\mathfrak{m}}_{\bar{0},\bar{a}_1})}$. \\[5pt]
iv. We replace the trace form $Q_{4p}$ for $F_p$ over $\Q$ with the trace form $\bar{Q}_{4p}$  for the \'etale algebra $\sO_{F_p}\otimes_\Z\F_\ell$ over $\F_\ell$. Noting again that $\sO_{F_p}$ is unramified at $q$, it follows from Lemma~\ref{lem:Trace} and a specialization argument that $\<p\>+\bar{Q}_{4p}=\sum_{i=1}^px_i^2$ in $\GW(\F_\ell)$.\\[10pt]
With these changes and comments, the argument for characteristic zero goes through in characteristic $\ell$.

For $p=2$, $\ell\neq2$, the direct computation given for $\P(\psi_2)$ goes through over $\F_\ell$ without change. 
\end{proof}

\begin{lemma} \label{lem:Trace} Let $p$ be an odd prime, $\zeta_{4p}$ a primitive $4p$th root of unity and let $F_p$ be the real subfield $\Q(\zeta_{4p}+\zeta_{4p}^{-1})$ of $\Q(\zeta_{4p})$. Let  $Q_{4p}$ denote the trace form for the field extension $F_p/\Q$. Then the quadratic form $T_p:=\<p\>+Q_{4p}$ is equivalent to the sum of squares $\sum_{i=1}^px_i^2$.
\end{lemma}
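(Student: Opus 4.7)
The plan is to apply the Hasse–Minkowski classification: since $T_p = \<p\> + Q_{4p}$ and the sum of $p$ squares $I_p := p\<1\>$ both have dimension $p$, they are isometric over $\Q$ if and only if they agree in signature, discriminant, and Hasse–Witt invariant at every place. The archimedean comparison is immediate, because $F_p$ is totally real, hence $Q_{4p}$ is positive definite of rank $p-1$, making $T_p$ positive definite of rank $p$.

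For the discriminant I would compute $\mathrm{disc}(T_p) \equiv p \cdot d_{F_p} \pmod{\Q^{\times 2}}$, where $d_{F_p}$ is the absolute discriminant of $F_p$. Using the conductor–discriminant formula for the abelian extension $F_p/\Q$ (whose character group consists of the even Dirichlet characters of conductor dividing $4p$), the characters of order $>2$ come in complex-conjugate pairs and so contribute squares to the product. Only the unique quadratic character, which cuts out the subfield $\Q(\sqrt p)\subset F_p$, contributes a nontrivial class modulo squares, giving $d_{F_p} \equiv p \pmod{\Q^{\times 2}}$ and hence $\mathrm{disc}(T_p) \equiv p^2 \equiv 1$, matching $\mathrm{disc}(I_p)$.

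For the Hasse–Witt invariant at each finite prime I would invoke Serre's formula for the trace form of a separable extension (the reference to Bayer and the Bayer–Suarez paper \cite{BayerSuarez} in the acknowledgements):
\[
w_2(Q_{4p}) = w_2(\rho) + (2)\cup(d_{F_p}),
\]
where $\rho\colon \Gal(\bar\Q/\Q)\to S_{p-1}$ is the permutation representation attached to $F_p$. Combined with $w(T_p) = (1+(p))\cdot w(Q_{4p})$, the identity $w_1(Q_{4p}) = (d_{F_p}) = (p)$ from the preceding step, and $w(I_p) = 1$, the problem reduces to showing $w_2(\rho) = (-2, p)$ in $H^2(\Q, \Z/2)$.

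The main obstacle is this cohomological computation. I would carry it out by filtering the permutation representation through the tower $\Q\subset \Q(\sqrt p)\subset F_p$, noting that since $\Q(\zeta_{4p})$ contains both $\Q(i)$ and $\Q(\sqrt{p^*})$ with $p^* = (-1)^{(p-1)/2}p$, one has $\sqrt p \in F_p$ for every odd prime $p$. Applying the Scharlau transfer for this tower, together with the trace form $\<2, 2p\>$ of $\Q(\sqrt p)/\Q$, reduces the determination of $w_2(\rho)$ to the Stiefel–Whitney class of the permutation representation of the odd-degree cyclic extension $F_p/\Q(\sqrt p)$, where one has much better control and can verify the required identity at each finite place. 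With all invariants matched, Hasse–Minkowski delivers the isometry $T_p \cong I_p$.
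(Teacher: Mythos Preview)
Your Hasse--Minkowski strategy is sound and is in fact the second of two proofs the paper offers. The paper's execution differs from yours, however: rather than computing $w_2(\rho)$ for $F_p$ directly---which you defer to a tower/transfer argument that is not actually carried out---the paper applies Serre's theorem to the smaller field $\Q(\xi_p) := \Q(\zeta_p + \zeta_p^{-1})$, where the inclusion $\Q(\xi_p) \subset \Q(\zeta_p)$ solves the relevant embedding problem and hence forces the obstruction class to vanish, and then relates $Q_{4p}$ to $Q_p := \Tr_{\Q(\xi_p)/\Q}$ via an explicit orthogonal decomposition coming from a hand-chosen $\Q$-basis of $F_p$. This sidesteps precisely the computation you flag as the main obstacle.

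The paper's \emph{primary} proof avoids invariants entirely. By Bayer--Suarez \cite{BayerSuarez}, $Q_{4p}$ is the rational form of the root lattice $A_{p-1} \subset (\Z^p,\cdot)$, the hyperplane $\sum x_i = 0$. The orthogonal complement of $A_{p-1}$ in $\Z^p$ is $\Z\cdot(1,\ldots,1)$ with self-product $p$, so $\<p\> + Q_{4p}$ is the form on the full-rank sublattice $A_{p-1} + \Z\cdot(1,\ldots,1)$, whose $\Q$-span is $\Q^p$ with the standard form. This is essentially a one-line argument once the Bayer--Suarez identification is available, and it explains structurally the coincidence of invariants you would otherwise be checking place by place.
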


\begin{proof} We have the lattice $(\Z^n,\cdot)$, with $\cdot$ the usual dot product.  For $n\ge2$ an integer, we have the sublattice $A_{n-1}$ of $(\Z^n,\cdot)$, given as the hyperplane 
\[
A_{n-1}:=\{(x_1,\ldots, x_n)\in \Z^n\mid \sum_{i=1}^nx_i=0\}; 
\]
this gives us the integral quadratic form $q_{A_{n-1}}$. Bayer and Suarez \cite[pg. 222]{BayerSuarez} have shown that $q_{A_{p-1}}$ is isometric to an integral model for $Q_{4p}$. Thus  $Q_{4p}+\<p\>$ is isometric to the quadratic form associated to the full sublattice $A_{p-1}+\Z\cdot(1,\ldots, 1)$ of $(\Z^p,\cdot)$. As
\[
\Q\cdot (A_{p-1}+\Z\cdot(1,\ldots, 1))=\Q^p,
\]
it follows that $Q_{4p}+\<p\>$ is equivalent (as a quadratic form over $\Q$) to $p\<1\>$.  

Another proof uses  Serre's theorem \cite[Th\'eor\`eme 1]{Serre}  on the Hasse-Witt invariant of trace forms; we sketch this proof. For $\zeta_n$ a primitive $n$th root of unity, we set $\xi_{n,j}:=\zeta^j_n+\zeta_n^{-j}$ and $\eta_{n,j}=\zeta^j_n-\zeta^{-j}_n$ and set  $\xi_n=\xi_{n,1}$. Let $Q_p$ be the trace form $\Tr_{\Q(\xi_p)/\Q}$.  Using Serre's theorem and the fact that the inclusion  $\Q(\xi_p)\subset \Q(\zeta_p)$ solves the embedding problem for $\Q(\xi_p)$ \cite[\S 3.1.3]{Serre}, we find  
\[
w_2(Q_p)=(2,p^{\frac{p-3}{2}})=\begin{cases} 0&\text{ for } p\equiv 3\mod 4\\
(2,p)&\text{ for } p\equiv 1\mod 4.
\end{cases}
\]
Since $\Q(\xi_p)$ is totally real, $Q_p$ has signature equal to its rank $(p-1)/2$; by comparing the complete set of invariants: rank signature, discriminant, Hasse-Witt invariant (see e.g.\cite[Chap. 5, Theorem 3.4 and Theorem 6.4]{Scharlau}), we have the equivalence of quadratic forms over $\Q$:
\[
Q_p\sim \begin{cases} \frac{p-1}{2}\<1\>&\text{ for }p\equiv 3\mod 4\\
\<2\>+\<2p\>+\frac{p-5}{2}\<1\>&\text{ for }p\equiv 1\mod 4
\end{cases}
\]

We now compute the discriminant and Hasse-Witt invariant for the trace form $Q_{4p}:=\Tr_{\Q(\xi_{4p})/\Q}$. We have the $\Q$-basis of $\Q(\xi_p)$, 
\[
\xi_{p,1},\ldots, \xi_{p,d}; \quad d=\frac{p-1}{2}.
\]
We extend this to the basis
\[
\xi_{p,1},\ldots, \xi_{p,d}, i\eta_{p,1},\ldots, i\eta_{p,d}
\]
for $\Q(\xi_{4p})$. Letting $V_1$ be the $\Q$-subspace spanned by the $\xi_{p,j}$ und $V_2$ the $\Q$-subspace spanned by the $i\eta_{p,j}$, it follows from a direct computation that $V_1$ and $V_2$ are orthogonal with respect to $Q_{4p}$, that $Q_{4p}$ restricted to $V_1$ is equal to $2\cdot Q_p$ and that the restriction of $Q_{4p}$ to $V_2$ is (in this basis) the diagonal form $\frac{p-1}{2}\cdot\<2p\>$. In other words 
\[
Q_{4p}\sim \begin{cases} d\cdot\<2\>+d\cdot\<2p\>
&\text{ for }p\equiv 3\mod 4\\
\<1\>+\<p\>+(d-2)\cdot \<2\>+d\cdot\<2p\>&\text{ for }p\equiv 1\mod 4
\end{cases}
\]

Using this, one computes the discriminant and Hasse-Witt invariant of $Q_{4p}+\<p\>$ and finds the discriminant is 1 modulo squares, and the Hasse-Witt invariant is zero. Since $\Q(\xi_{4p})$ is totally real, $Q_{4p}+\<p\>$ has signature $p$. This shows that 
$Q_{4p}+\<p\>\sim p\<1\>$. 
\end{proof}

\section{Characteristic classes of symmetric powers} \label{sec:SymPower}

For a positive integer $m$ we write $m!!$ for the product 
\[
m!!=\prod_{i=0}^{[m/2]} m-2i.
\]

For $F=ke_1\oplus ke_2$, we give $\Sym^m F$ the oriented basis $e_1^m, e_1^{m-1}e_2$, $\ldots$, $e_1e_2^{m-1}, e_2^m$.  Given a two-dimensional representation $\rho:G\to \SL_2$, this defines the representation $\Sym^m\rho:G\to \SL_{m+1}$.  
 
\begin{theorem}\label{thm:SymRnk2} Let  $m$ be a positive integer, $k$ a perfect field. Let $E\to X$ be a rank two vector bundle on some $X\in \Sm/k$ and consider the Euler class $e(\Sym^mE)\in H^{m+1}(X,\sW(\det^{-1} \Sym^mE))$. Then for $m=2r+1$ odd, and  $k$ of characteristic 0, or of positive characteristic $\ell$ prime to $2m$, we have
\[
e(\Sym^mE)=m!!\cdot e(E)^{r+1},
\]
For $k$ an arbitrary field of characteristic $\neq2$ and for  $m$ even, we have $e(\Sym^mE)=0$.

The total Pontryagin class $p:=1+\sum_{i\ge 1}p_i$ is given by
\[
p(\Sym^mE)=\prod_{i=0}^{[m/2]}(1+(m-2i)^2e(E)^2)
\]
\end{theorem}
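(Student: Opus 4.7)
The plan is to reduce all three identities to the universal case $E=\tilde{O}(1)$ on $BN$ and then to decompose $\Sym^{m}\tilde{O}(1)$ as a direct sum of rank two $N$-subrepresentations of $\Sym^{m}F$. For the reduction, I invoke Theorem~\ref{thm:BGSplitting} with $\sE=\EM(\sW)$: the pullback $\pi_{\SL_2}^{*}\colon H^{*}(\BSL_2,\sW(\mathcal{L}))\to H^{*}(BN,\sW(\mathcal{L}))$ is injective for any line bundle $\mathcal{L}$ pulled back from $\BSL_2$, so by Ananyevskiy's splitting principle and the naturality of the characteristic classes involved, each identity follows from its instance for $\tilde{O}(1)\to BN$. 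Under the $T$-weight decomposition, $\sigma$ swaps the $\pm(m-2i)$ weight spaces, giving $N$-subrepresentations $W_i=\langle e_1^{m-i}e_2^i,\ e_1^i e_2^{m-i}\rangle$ for $0\le i<m/2$; when $m$ is even, the zero-weight vector $e_1^{m/2}e_2^{m/2}$ spans an additional one-dimensional summand $L_0$. A direct calculation using $\sigma\cdot e_1^{m-i}e_2^i=(-1)^i e_1^i e_2^{m-i}$ identifies $W_i\cong \tilde{O}(m-2i)$ when $i$ is even and $W_i\cong \tilde{O}^-(m-2i)$ when $i$ is odd.

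For the Pontryagin formula, Whitney multiplicativity for arbitrary direct sums in an $\eta$-invertible $\SL$-oriented theory (\cite[Corollary 7.9]{Anan19}) gives $p(\Sym^{m}\tilde{O}(1))=\prod_i p(W_i)\cdot p(L_0)$, with $p(L_0)=1$. Each rank two summand $W_i$ satisfies $p_1(W_i)=e(W_i)^2$, and Theorem~\ref{thm:EulerClassOm} — using $\tilde{e}^{\,2}=4\,e(\tilde{O}(1))^2$ for the case of even $m-2i$ — yields $e(W_i)^2=(m-2i)^2\,e(\tilde{O}(1))^2$ uniformly, since both the sign $\epsilon(m-2i)$ and the $\tilde{O}$ versus $\tilde{O}^-$ discrepancy disappear upon squaring. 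This gives the claimed product; for even $m$ the $i=m/2$ factor equals $1$ and contributes trivially.

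The vanishing $e(\Sym^{m}E)=0$ for even $m$ is then immediate: $L_0$ is a rank one summand in $\Sym^{m}\tilde{O}(1)$, its Euler class vanishes by Lemma~\ref{lem:PStab}, and Whitney multiplicativity of the Euler class in the $\eta$-invertible $\SL$-oriented theory $\EM(\sW)$ forces $e(\Sym^{m}\tilde{O}(1))=0$.

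For odd $m=2r+1$, the rank $2(r+1)$ of $\Sym^{m}E$ is even, so Ananyevskiy's relation $p_{r+1}=e^2$ in rank $2(r+1)$ together with the Pontryagin formula gives $e(\Sym^{m}E)=\pm\,m!!\cdot e(E)^{r+1}$. The sign is pinned down by applying Whitney on $BN$ directly. Theorem~\ref{thm:EulerClassOm} yields $e(W_i)=(-1)^i\,\epsilon(m-2i)(m-2i)\,e(\tilde{O}(1))$ in the native ordered basis of $W_i$ (the factor $(-1)^i$ arising from the final assertion $e(\tilde{O}^-(k))=-e(\tilde{O}(k))$), and the ordered basis $(e_1^m,e_1^{m-1}e_2,\ldots,e_2^m)$ of $\Sym^{m}F$ relates to the concatenation of the block bases by a permutation with $r(r+1)$ inversions, hence of sign $+1$. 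The total sign is therefore $(-1)^{r(r+1)/2}\prod_{i=0}^{r}\epsilon(m-2i)$, which a short case check on $r\bmod 4$ shows equals $+1$ for every $r$. The main obstacle will be keeping these three independent sign sources straight — the rescaling identifying $W_i$ with $\tilde{O}^{\pm}(m-2i)$, the sign $\epsilon$ in Theorem~\ref{thm:EulerClassOm}, and the permutation reordering the basis — but each is a finite, directly checkable computation.
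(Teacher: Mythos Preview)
Your approach is essentially the paper's: reduce to $\tilde{O}(1)\to BN$, decompose $\Sym^m\rho_1\cong\bigoplus_i\rho^{(-1)^i}_{m-2i}$, and apply Theorem~\ref{thm:EulerClassOm}. Two remarks.

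First, a small gap: you invoke Theorem~\ref{thm:BGSplitting} for the injectivity of $H^*(\BSL_2,\sW)\to H^*(BN,\sW)$, but that theorem is proved only in characteristic zero, whereas the statement covers positive characteristic $\ell$ prime to $2m$. The paper instead uses Proposition~\ref{prop:WittCoh1} (valid for $\Char k\neq 2$), which gives that $p^*$ is an isomorphism in positive degrees; you should cite that. The reduction from $\BGL_2$ to $\BSL_2$ is handled by Theorem~\ref{thm:BSLDecomp}.

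Second, your even-$m$ vanishing and your sign bookkeeping are correct but slightly roundabout compared to the paper. For even $m$ the paper simply notes $\rnk\Sym^mE=m+1$ is odd and applies Lemma~\ref{lem:PStab} directly, without needing a summand decomposition or Whitney. For the odd-$m$ sign, the paper observes that for $r$ even one has $m\equiv 1\bmod 4$, so $(-1)^i\epsilon(m-2i)=+1$ for every $i$, while for $r$ odd one has $m\equiv 3\bmod 4$ and each factor is $-1$; since there are $r+1$ (even) factors in the latter case, the product is $+1$ either way. This absorbs both your $(-1)^{r(r+1)/2}$ and the $\epsilon$-product into a single uniform observation, and the parity of the basis permutation is folded into the claim that the decomposition is orientation-preserving.
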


\begin{proof} We first compute the Euler class. The bundle $\Sym^mE$ has rank $m+1$, so has odd rank if $m$ is even, and therefore has vanishing Euler class in Witt cohomology by \cite[Proposition 7.3]{LevEnum} or Lemma~\ref{lem:PStab}.  We consider the case of odd $m=2r+1$.

It clearly suffices to prove the formula for the universal bundle $E_2\to \BGL_2$. By Theorem~\ref{thm:BSLDecomp}, it suffices to prove the formula for the universal bundle $E_2\to \BSL_2$ and then by Proposition~\ref{prop:WittCoh1}, we reduce to the case of the rank two bundle $\tilde{E}_2=\tilde{O}(1)\to BN$. 

It is easy to see that we have an orientation-preserving isomorphism
\[
\Sym^m\rho_1\cong \oplus_{i=0}^r \rho^{(-1)^i}_{m-2i}
\]
where we write $\rho_n^{+1}$ for $\rho_n$ and $\rho_n^{-1}$ for $\rho_n^-$. This
 gives the isomorphism
\[
\Sym^m\tilde{O}(1)\cong \oplus_{i=0}^r\tilde{O}^{(-1)^i}(m-2i)
\]
and thus by Theorem~\ref{thm:EulerClassOm} we have
\[
e(\Sym^m\tilde{O}(1))=\prod_{i=0}^r e(\tilde{O}^{(-1)^i}(m-2i))=
\begin{cases} \prod_{i=0}^r (m-2i)\cdot p^*e &\text{ for }r\text{ even,}\\
\prod_{i=0}^r (-1)\cdot(m-2i)\cdot p^*e &\text{ for }r\text{ odd.} 
\end{cases}
\]
which in either case gives 
\[
e(\Sym^m\tilde{O}(1))
=m!!\cdot p^*e^{r+1}.
\]

For the total Pontryagin class, we recall that $p_i$ lives in $H^{4i}(-,\sW)$, and for a rank 2 bundle $F$, $p_1(F)=e(F)^2$ and the higher Pontryagin classes vanish. For a rank 1 bundle $F$, $p(F)=1$, and for arbitrary bundles $F, F'$, $p$ satisfies the Whitney formula
\[
p(F\oplus F')=p(F)\cup p(F').
\]

Thus, for $m=2r$ even, we have 
\[
\Sym^m\rho_1\cong\begin{cases}  \rho_0\oplus \oplus_{i=0}^{r-1} \rho_{m-2i}&\text{ for }r\text{ even }\\
\rho_0^-\oplus \oplus_{i=0}^{r-1} \rho_{m-2i}&\text{ for }r\text{ odd.}
\end{cases}
\]
which by Theorem~\ref{thm:EulerClassOm} gives
\[
p(\Sym^m\tilde{O}(1))=\prod_{i=0}^r(1+(m-2i)^2p^*e^2).
\]
For  $m=2r+1$ odd, we have as above
\[
\Sym^m\rho_1\cong  \oplus_{i=0}^r\rho^{\pm}_{m-2i}
\]
which by Theorem~\ref{thm:EulerClassOm}  again gives
\[
p(\Sym^m\tilde{O}(1))=\prod_{i=0}^r(1+(m-2i)^2p^*e^2).
\]
\end{proof}

\begin{ex} Consider a smooth hypersurface $X$ in $\P^{d+1}_k$ of degree $2d-1$ with $k$ a perfect field of characteristic prime to $2(2d-1)$. Counting constants, one expects $X$ to contain finitely many lines. In fact, if $X$ is defined by a degree $2d-1$ homogeneous polynomial $f\in k[T_0,\ldots, T_{d+1}]$, then $f$ defines a global section $s_f$ of $\Sym^{2d-1}E^\vee_2$, where $E_2^\vee$ is the dual of the tautological rank 2 subbundle $E_2\subset \sO_{\Gr}^{d+2}$ on the Grassmannian $\Gr=\Gr(2, d+2)$, and a line $\ell\subset \P^{d+1}$ is contained in $X$ exactly when $s_f([\ell])=0$, where $[\ell]\in \Gr$ is the point corresponding to the 2-plane $\pi\subset \A^{d+2}$ with $\ell=\P(\pi)$. Since $\Gr$ has dimension $2d=\rnk \Sym^{2d-1}E^\vee_2$, one would expect that for a general $f$, $s_f$ would  have finitely many zeros.

In the case of our section $s_f$, with $X$ having finitely many lines, we may apply our Theorem~\ref{thm:SymRnk2} to find 
\begin{multline*}
\sum_{x,\ s_f(x)=0}e_x(\Sym^{2d-1}E_2^\vee;s_f)=
e(\Sym^{2d-1}E_2^\vee)\\=(2d-1)!!e(E_2^\vee)^d\in H^{2d}(\Gr, \sW(\sO_\Gr(-d))
\end{multline*}
 Here $e_x(\Sym^{2d-1}E_2^\vee;s_f)$ is the image of the local index $s_f^*(\th(\Sym^{2d-1}E_2^\vee))\in 
H^{2d}_x(\Gr, \sW(\sO_\Gr(-d))$ in $H^{2d}(\Gr, \sW(\sO_\Gr(-d))$. Note that  $\det E_2^\vee=\sO_{\Gr}(1)$ (with respect to the Pl\"ucker embedding).

The canonical exact sequence presenting the tangent bundle
\begin{equation}\label{eqn:TanBun}
0\to E_2\otimes E_2^\vee\to \oplus_{i=0}^{d+1}E_2^\vee\cdot e_i \xrightarrow{\pi} T_{\Gr/k}\to 0
\end{equation}
and the canonical isomorphism $\det  E_2\otimes E_2^\vee\cong \sO_\Gr$ gives the isomorphism $\omega_{\Gr/k}=\sO_{\Gr}(-d-2)$, which in turn gives the isomorphism
\[
\sK^{MW}_*(\sO_\Gr(-d))\cong \sK^{MW}_*(\omega_{\Gr/k}).
\]
The push-forward  
\[
\pi_{\Gr*}: H^d(\Gr, \sK^{MW}_d(\omega_{\Gr/k}))\to H^0(\Spec k, \sGW)=\GW(k)
\]
thus gives us well-defined push-forward maps
\begin{align*}
&\pi_{\Gr*}: H^d(\Gr, \sK^{MW}_d(\sO_\Gr(-d)))\to \GW(k),\\
&\pi_{\Gr*}: H^d(\Gr, \sW(\sO_\Gr(-d)))\to W(k).
\end{align*}
In addition, we claim that
\[
\pi_{\Gr*}(e(E_2^\vee)^d)=\<1\>\in \GW(k). 
\]
Indeed,  a linear form $L$ on $\P^{d+1}$ gives a section $s_L$ of $E_2^\vee$, transverse to the 0-section,  with zero-locus $z_L$ exactly the Grassmannian of lines contained in the hyperplane $L=0$. Taking $L$ to be the coordinate $T_i$, $i=0,\ldots, d-1$, the corresponding (transverse) intersection of 0-loci is the point of $\Gr$ corresponding to the  line $\ell_0$ defined by $T_0=\ldots=T_{d-1}=0$. $\ell_0$ is a point of the big affine cell $\Gr^0\subset \Gr$ consisting of the lines $\ell$ that do not intersect the linear subspace $T_d=T_{d+1}=0$. For $\ell\in \Gr^0$, $\ell$ intersects the hyperplane $T_{d+1}=0$ in a single point $(x_0,\ldots, x_{d-1}, 1,0)$ and the 
the hyperplane $T_{d}=0$ in a single point $(y_0,\ldots, y_{d-1}, 0,1)$, giving the coordinates $(x_0, y_0, x_1, y_1,\ldots, x_{d-1}, y_{d-1})$ for $\Gr^0\cong \A^{2d}$. The sections $s_{T_d}$, $s_{T_{d+1}}$ of $E_2^\vee$ give a framing trivializing $E_2^\vee$ over $\Gr^0$ and composing the map $\pi$  \eqref{eqn:TanBun} with the inclusion of subbundle $\oplus_{i=0}^{d-1}E_2^\vee\cdot e_i$ of $\oplus_{i=0}^{d+1}E_2^\vee\cdot e_i$ defines an isomorphism $\alpha:\oplus_{i=0}^{d-1}E_2^\vee\cdot e_i\to T_{\Gr/k}$ over $\Gr^0$. 

The class $e(E_2^\vee)^d$ is represented in $H^d_{\ell_0}(\Gr, \sK^{MW}(\sO(-d))$ by the pullback of $\th(\oplus_{i=0}^{d-1}E_2^\vee\cdot e_i)$ by the section $s:=\sum_{i=0}^{d-1}s_{T_i}e_i$. We can identify  $H^d_{\ell_0}(\Gr, \sK^{MW}(\det^{-d} E_2^\vee))$ with
$\GW(k(\ell_0);  \det^{-d} E_2^\vee\otimes \omega_{\Gr/k}^{-1}\otimes k(\ell_0))$ via the Rost-Schmid resolution. Since $s$ is transverse to the zero-section, the  element of $\GW(k(\ell_0);  \det^{-d} E_2^\vee\otimes \omega_{\Gr/k}^{-1})$ corresponding to  $s^*(\th(\oplus_{i=0}^{d-1}E_2^\vee\cdot e_i))$ is given by the rank one form $\<\det^{-1} \bar{ds}\>$, where $\bar{ds}:T_{\Gr, \ell_0}\to \oplus_{i=0}^{d-1}E_2^\vee\cdot e_i$ is the composition of the differential $ds:T_{\Gr, \ell_0}\to T_{\ell_0,0}(\oplus_{i=0}^{d-1}E_2^\vee\cdot e_i)$ with the projection $T_{\ell_0,0}(\oplus_{i=0}^{d-1}E_2^\vee\cdot e_i)\to  \oplus_{i=0}^{d-1}E_2^\vee\cdot e_i$ with kernel the tangent space to the 0-section. A direct computation shows that $\bar{ds}$ is exactly the inverse to $\alpha\otimes k(\ell_0)$, so after using \eqref{eqn:TanBun} and our trivialization of $E_2^\vee$ to identify $\det^{-d} E_2^\vee\otimes \omega_{\Gr/k}^{-1}\otimes k(\ell_0)$ with $k(\ell_0)$ we obtain the form $\<1\>\in \GW(k(\ell_0))$; as $k=k(\ell_0)$, this pushes forward to $\<1\>\in \GW(k)$, as claimed. 

The computation above yields the class of $\pi_{\Gr*}(e(\Sym^{2d-1}(E_2^\vee))\in W(k)$ as
\[
\pi_{\Gr*}(e(\Sym^{2d-1}(E_2^\vee))=(2d-1)!!\<1\>\in W(k).
\]
We can lift this to an identity in $\GW(k)$ by computing the rank, which is just the degree of the top Chern class of $\Sym^{2d-1}E_2^\vee$, that is, the number of lines on $X$ over an algebraically closed field, say $N_d$. This gives
\[
\pi_{\Gr*}(e(\Sym^{2d-1}(E_2^\vee))=(2d-1)!!\<1\>+\frac{N_d-(2d-1)!!}{2}\cdot (\<1\>+\<-1\>)\in \GW(k).
\]
For $k=\R$, this recovers a computation of Okonek-Teleman \cite{OkonekTeleman} by taking the signature.

For $d=2$, $N_d=27$ and we have
\[
\pi_{\Gr*}(e(\Sym^{3}(E_2^\vee))=3\<1\>+12\cdot (\<1\>+\<-1\>)=15\<1\>+12\<-1\>\in \GW(k),
\]
recovering a computation of Kass-Wickelgren \cite{KW2}.

Additionally, Kass-Wickelgren  \cite{KW1} have shown how to  compute  explicitly the local contribution $e_x(V;s)$  to the global Euler class $e(V)$ of an isolated zero of a section $s$ of a vector bundle $V\to Y$ of rank equal to the dimension $d$ of $Y$. In case $s$ has only isolated zeros, this gives the formula
\[
e(V)=\sum_{s(x)=0}e_x(V;s).
\]
They give a geometric interpretation of this local contribution in the case of the section $s_f$ for $d=2$, and use their computation of the pushforward of the Euler class to deduce  a mod 2 congruence for the number of lines of different geometric type defined over even and odd extensions of the base field $k$ for $k$ a finite field.  It would be interesting to extend this geometric interpretation to the higher dimensional case.
\end{ex}

\section{Tensor products and tensor powers}\label{sec:TensorPower}

In this section we assume $\Char k\neq 2$. To discuss characteristic classes of tensor products, we need to fix a canonical isomorphism $(\det E)^{\rnk F}\otimes (\det F)^{\rnk E}\cong \det (E\otimes F)$. It suffices to do this for free $R$-modules $E=\oplus_{i=1}^nRe_i$, $F=\oplus_{j=1}^mRf_j$, functorially in $R$, which we do by sending
\[
(e_1\wedge\ldots\wedge e_n)^{\otimes m}\otimes (f_1\wedge\ldots\wedge f_m)^{\otimes n}
\]
to
\[
(e_1\otimes f_1)\wedge\ldots\wedge(e_n\otimes f_1)\wedge\ldots\wedge
(e_1\otimes f_m)\wedge\ldots\wedge(e_n\otimes f_m)
\]

Similarly, we have the canonical isomorphism $\det E\otimes \det F\cong \det(E\oplus F)$, determined by sending $(e_1\wedge\ldots\wedge e_n)\otimes(f_1\wedge\ldots f_m)$ to $((e_1,0)\wedge\ldots\wedge (e_n,0))\wedge ((0,f_1)\wedge\ldots (0,f_m))$ in the case $E=\oplus_{i=1}^nRe_i$, $F=\oplus_{j=1}^mRf_j$.

\begin{proposition} \label{prop:TensorRank2} Let $E\to X$, $E'\to Y$ be rank two bundles on $X, Y\in \Sm/k$ with trivialized determinants.  Then
\begin{align*}
&e(\pi_1^*E\otimes \pi_2^*E')=\pi_1^*e(E)^2-\pi_2^*e(E')^2\in H^4(X\times Y,\sW)\\
&p_1(\pi_1^*E\otimes \pi_2^*E')=2(\pi_1^*e(E)^2+\pi_2^*e(E')^2)\in H^4(X\times Y,\sW)\\
&p_2(\pi_1^*E\otimes \pi_2^*E')=(\pi_1^*e(E)^2-\pi_2^*e(E')^2)^2\in H^8(X\times Y,\sW)
\end{align*}
and $p_m(\pi_1^*E\otimes \pi_2^*E')=0$ for $m>2$.
\end{proposition}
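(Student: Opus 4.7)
The plan is to reduce to a universal computation on $\BSL_2\times\BSL_2$ and pin down $e(V)$ and $p_1(V)$, where $V:=\pi_1^*\tilde E_2\otimes\pi_2^*\tilde E_2$, as polynomials in $\alpha_1,\alpha_2:=\pi_i^*e(\tilde E_2)$ by specialization along three test maps. By naturality of characteristic classes, it suffices to prove the identities for the universal rank-two $\SL$-bundles on $(\BSL_2)^2$. Ananyevskiy's $\SL_2$-splitting principle gives $H^*((\BSL_2)^2,\sW)=W(k)[\alpha_1,\alpha_2]$, so in degree $4$ both $e(V)$ and $p_1(V)$ lie in the free $W(k)$-module on $\alpha_1^2,\alpha_1\alpha_2,\alpha_2^2$. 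I will write
\[
e(V)=a\alpha_1^2+b\alpha_1\alpha_2+c\alpha_2^2,\qquad p_1(V)=a'\alpha_1^2+b'\alpha_1\alpha_2+c'\alpha_2^2,
\]
and aim to show $(a,b,c)=(1,0,-1)$ and $(a',b',c')=(2,0,2)$.

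The three specializations are $\iota_1(x):=(x,\ast)$, $\iota_2(x):=(\ast,x)$ and the diagonal $\Delta$, each from $\BSL_2$ to $(\BSL_2)^2$. Under $\iota_1$, $V$ pulls back to $\tilde E_2\otimes(\sO\oplus\sO)$; comparing the paper's convention-ordered basis for $\otimes$ with the canonical ordered basis for $\oplus$ shows the resulting isomorphism with $\tilde E_2\oplus\tilde E_2$ is orientation-preserving, so Whitney yields $e(\iota_1^*V)=\alpha^2$ and $p_1(\iota_1^*V)=2\alpha^2$, giving $a=1$ and $a'=2$. Under $\iota_2$ the analogous basis comparison is an odd permutation (a single transposition), reversing orientation; thus $e(\iota_2^*V)=-\alpha^2$ while $p_1(\iota_2^*V)=2\alpha^2$ is unchanged (Pontryagin classes being orientation-independent), giving $c=-1$ and $c'=2$. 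Under $\Delta$, since $\Char k\neq2$, $\tilde E_2\otimes\tilde E_2=\Sym^2\tilde E_2\oplus\Lambda^2\tilde E_2=\Sym^2\tilde E_2\oplus\sO$; Lemma~\ref{lem:PStab} gives $e(\Sym^2\tilde E_2)=0$ (odd rank), so Whitney forces $\Delta^*e(V)=0$, while Theorem~\ref{thm:SymRnk2} yields $p_1(\Sym^2\tilde E_2)=4\alpha^2$ and hence $\Delta^*p_1(V)=4\alpha^2$. Setting $\alpha_1=\alpha_2=\alpha$ in the two expressions above and using the values of $a,c,a',c'$ already determined forces $b=b'=0$, completing the calculations of $e(V)$ and $p_1(V)$.

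The remaining formulas come for free: since $V$ is a rank-$4=2\cdot 2$ $\SL$-bundle, the relations recalled in Section~\ref{sec:background} give $p_2(V)=e(V)^2=(\alpha_1^2-\alpha_2^2)^2$ and $p_m(V)=0$ for $m>2$. The main point that will require vigilance is the orientation bookkeeping under $\iota_1$ and $\iota_2$: the single-transposition sign appearing under $\iota_2$ is precisely what produces the minus sign in $e(E)^2-e(E')^2$, and is consistent with the independent observation that the canonical swap $E\otimes E'\cong E'\otimes E$ reverses the convention-orientation on a rank-$2\otimes 2$ bundle, so that the Euler class formula is antisymmetric and the $p_1$ formula symmetric in $E,E'$.
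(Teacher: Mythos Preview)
Your proof is correct and follows essentially the same approach as the paper: reduce to the universal bundle on $\BSL_2\times\BSL_2$, write the unknown classes as polynomials in $\pi_1^*e,\pi_2^*e$, and determine the coefficients by restricting along $\iota_1$, $\iota_2$, and the diagonal, using exactly the same orientation bookkeeping (the odd permutation under $\iota_2$) and the same appeal to Theorem~\ref{thm:SymRnk2} for $p_1(\Sym^2\tilde E_2)$. The only cosmetic difference is that on the diagonal you kill the Euler class via $e(\Sym^2\tilde E_2)=0$ (odd rank) whereas the paper uses $e(\sO_{\BSL_2})=0$; both are immediate.
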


\begin{proof} The last formula follows from the fact that $p_m(F)=0$ for $2m>\rnk(F)$. The formula for $p_2$ follows from the formula for $e$ and the fact that $p_m(F)=e(F)^2$ for $F$ of rank $2m$.

It suffices to consider the universal case of $\pi_1^*\tilde{E}_2\oplus \pi_2^*\tilde{E}_2$ on $\BSL_2\times \BSL_2$.  Let 
\[
i_1:\BSL_2\to \BSL_2\times \BSL_2,\  i_2:\BSL_2\to \BSL_2\times \BSL_2
\]
be the inclusions $i_1(x)=(x\times x_0)$, $i_2(x)=x_0\times x$, where $x_0$ is the base-point, and let $\delta:\BSL_2\to \BSL_2\times \BSL_2$ be the diagonal inclusion. We have
\begin{align*}
&i_1^*\pi_1^*\tilde{E}_2\otimes \pi_2^*\tilde{E}_2=\tilde{E}_2\otimes \tilde{E}_{2x_0}\\
&i_2^*\pi_1^*\tilde{E}_2\otimes \pi_2^*\tilde{E}_2=\tilde{E}_{2x_0}\otimes\tilde{E}_2\\
&\delta^*\pi_1^*\tilde{E}_2\otimes \pi_2^*\tilde{E}_2\cong \Sym^2\tilde{E}_2\oplus O_{\BSL_2}
\end{align*}
We have a canonical isomorphism $\tilde{E}_{2x_0}\cong k^2$, which gives us isomorphisms
\begin{align*}
&i_1^*\pi_1^*\tilde{E}_2\otimes \pi_2^*\tilde{E}_2\cong \tilde{E}_2\oplus \tilde{E}_{2}\\
&i_2^*\pi_1^*\tilde{E}_2\otimes \pi_2^*\tilde{E}_2\cong\tilde{E}_{2}\oplus \tilde{E}_2
\end{align*}
Furthermore, the canonical isomorphism
\[
\det\tilde{E}_2\cong O_{\BSL_2}
\]
induces the isomorphisms
\begin{align*}
&\alpha:\det(\pi_1^*\tilde{E}_2\otimes \pi_2^*\tilde{E}_2)\to O_{\BSL_2\times \BSL_2}\\
&\beta: \det(\tilde{E}_2\oplus \tilde{E}_{2})\to O_{\BSL_2}
\end{align*}
using our convention for the determinant of a tensor product and a direct sum, as described above. This gives us the identities
\[
i_1^*\alpha=\beta,\ i_2^*\alpha=-\beta;
\]
in terms of free modules $E=Re_1\oplus Re_2$, $F=Rf_1\oplus Rf_2$, this arises from the fact that 
\[
e_1\otimes f_1, e_2\otimes f_1, e_1\otimes f_2, e_2\otimes f_2
\]
is an oriented basis for $E\otimes F$, giving the oriented basis
\[
e_1\otimes f_1, e_2\otimes f_1, e_1\otimes f_2, e_2\otimes f_2
\]
for $E\otimes f_1\oplus E\otimes f_2$, while an oriented basis for $e_1\otimes F\oplus e_2\otimes F$ is
\[
e_1\otimes f_1, e_1\otimes f_2, e_2\otimes f_1, e_2\otimes f_2,
\]
which introduces the sign -1  in the identity $i_2^*\alpha=-\beta$.

Keeping track of the orientations gives the identities
\begin{align*}
i_1^*e(\pi_1^*\tilde{E}_2\otimes \pi_2^*\tilde{E}_2)&=e(\tilde{E}_2\oplus \tilde{E}_2)=e(\tilde{E}_2)^2\\
i_2^*e(\pi_1^*\tilde{E}_2\otimes \pi_2^*\tilde{E}_2)&=-e(\tilde{E}_2\oplus \tilde{E}_2)=-e(\tilde{E}_2)^2\\
\delta^*(\pi_1^*\tilde{E}_2\otimes \pi_2^*\tilde{E}_2)&=e(\Sym^2\tilde{E}_2)\cup
e(O_{\BSL_2})=0
\end{align*}

On the other hand, we have
\[
H^*(\BSL_2\times \BSL_2, \sW)=W(k)[\pi_1^*e, \pi_2^*e]
\]
so there are uniquely defined elements $a,b,c\in W(k)$ with 
\[
e(\pi_1^*\tilde{E}_2\otimes \pi_2^*\tilde{E}_2)=a\pi_1^*e^2+b\pi_1^*e\pi_2^*e+c\pi_2^*e^2.
\]
The computations above together with the identities
\[
i_t^*\pi_s^*e=0\text{ for }s\neq t,\ i_t^*\pi_t^*e=e,\ \delta^*\pi_t^*e=e,
\]
imply that $a=1$,   $c=-1$ and   $b=0$.

For the Pontryagin class, we recall that the Pontryagin classes are stable and are insensitive to the orientation. This gives  
\[
i_1^*p(\pi_1^*\tilde{E}_2\otimes \pi_2^*\tilde{E}_2)=i_2^*p(\pi_1^*\tilde{E}_2\otimes \pi_2^*\tilde{E}_2)=
p(\tilde{E}_2)^2=(1+e(\tilde{E}_2)^2)^2
\]
and using theorem~\ref{thm:SymRnk2} to compute $p_1(\Sym^2\tilde{E}_2)$ gives
\[
\delta^*p(\pi_1^*\tilde{E}_2\otimes \pi_2^*\tilde{E}_2)
=1+4e(\tilde{E}_2)^2
\]
Computing as above gives
\[
p_1(\pi_1^*\tilde{E}_2\otimes \pi_2^*\tilde{E}_2)=2(\pi_1^*e^2+\pi_2^*e^2)
\]
\end{proof}

\begin{remark} Combining Ananyevskiy's $\SL_2$ splitting principle with Theorem~\ref{thm:BSLDecomp} reduces the computation of the Euler class and Pontryagin classes in Witt cohomology of any Schur functor applied to a vector bundle to the case of a direct sum of rank 2 bundles with trivialized determinant. Via the representation theory of $\SL_2$, this reduces the computation to  the case of tensor products of symmetric powers. Using the $\SL_2$ splitting principle again, we need only know the characteristic classes of symmetric powers and the tensor product of two rank two bundles (with trivialized determinant); as this information is given by Theorem~\ref{thm:SymRnk2} and Proposition~\ref{prop:TensorRank2}, we have  a complete calculus for the computation of the characteristic classes, at least over a field of characteristic zero.

In positive characteristic, our calculus is limited by the restriction on the characteristic in Theorem~\ref{thm:SymRnk2}, but one still has a partial calculus for the characteristic classes of Schur functors of degree $\le m$ in characteristics $\ell>m$. 
\end{remark}

\end{document}